\documentclass[reqno]{amsart}
\usepackage[a4paper,portrait,margin=2.5cm]{geometry}

\usepackage[T1]{fontenc}
\usepackage[utf8]{inputenc}

\usepackage{color}
\usepackage[colorlinks=true]{hyperref}
\hypersetup{urlcolor=blue, citecolor=blue}

\usepackage{amsmath,amssymb,amsfonts,amsthm} % ams stuff everywhere
\usepackage{graphicx}
\usepackage{pgfplots}
\usepackage{placeins}
\usepackage{ulem}
\usepackage{cleveref}
\usepackage{thmtools}
\usepackage{todonotes}
\usepackage{stmaryrd}
\usepackage{mathtools}
\usepackage{mathrsfs}  
\usepackage{enumerate}

\numberwithin{equation}{section}

\makeatletter
\renewcommand{\paragraph}{\@startsection{paragraph}{4}{0ex}%
    {-3.25ex plus -1ex minus -0.2ex}%
    {1.5ex plus 0.2ex}%
    {\normalfont\normalsize}}
\makeatother

\setcounter{secnumdepth}{4}

\usepackage{etoolbox}
\appto\appendix{\addtocontents{toc}{\protect\setcounter{tocdepth}{1}} \newpage}

\definecolor{centeredcolor}{RGB}{191,0,191}
\definecolor{activitycolor}{RGB}{0,128,0}
\definecolor{BCHcolor}{RGB}{255,0,0}
\definecolor{sedancolor}{RGB}{0,0,255}

%%%%%%%%%%%%%%%%%%%%%%%%%%%%%%%%%%%%%%%%%%%%%%%%%%%%%%%%%%%%%%%%%%%%%%%%%%%%%%%%%%%%%%%
%% GENRAL PACKAGE OPTIONS
%%%%%%%%%%%%%%%%%%%%%%%%%%%%%%%%%%%%%%%%%%%%%%%%%%%%%%%%%%%%%%%%%%%%%%%%%%%%%%%%%%%%%%%

%%%%%%%%%%%%%%%%%%%%%%%%%%%%%%%%%%%%%%%%%%%%%%%%%%%%%%%%%%%%%%%%%%%%%%%%%%%%%%%%%%%%%%%
%% SPECIFIC  DEFINITIONS

\def\ICI{\begin{center}
{\color{magenta}
==================================\\
ICI\\
==================================}
\end{center}}

% ================== Compteur de constantes ==============
% definition du nouveau compteur
\newcounter{cst}
% creation d’une constante, par exemple \ctel{poincare}

% faire reference a une constance, par exemple \cter{poincare}

\def\wt#1{{\widetilde{#1}}}

\newcommand{\RR}{\mathbb R}

\newcommand{\xVV}[1]{\mathbf #1}
\newcommand{\xCC}[1]{\ifmmode \mathcal{#1} \else $\mathcal{#1}$ \fi}

\newcommand{\cC}{\xCC C}
\newcommand{\cc}{\mathscr{C}}

\newcommand{\cE}{\xCC E}

\newcommand{\cH}{\xCC H}

\newcommand{\cR}{\xCC R}

\newcommand{\cV}{\xCC V}

\newcommand{\bJ}{\xVV J}

\DeclareMathOperator{\dive}{div}

\def\ds{\displaystyle}

\def\bvarphi{\boldsymbol{\varphi}}
\def\bpsi{\boldsymbol{\psi}}

\def\p{\partial}
\def\R{\mathbb{R}}
\def\O{\Omega}
\def\be{\begin{equation}}
\def\ee{\end{equation}}
\def\n{\boldsymbol{n}}
\def\x{\boldsymbol{x}}
\def\d{{\rm d}}
\def\grad{\nabla}
\def\div{\dive}

\def\ov#1{\overline{#1}}
\def\wh#1{\widehat{#1}}

\def\Gg{\mathcal{G}}

\def\bgamma{{\boldsymbol \gamma}}
\def\eps{\epsilon}
\def\Dd{\mathcal{D}}
\def\Cc{\mathcal{C}}
\def\Ff{\mathcal{F}}
\def\Hh{\mathcal{H}}
\def\Tt{\mathcal{T}}

\def\Ee{\mathcal{E}}

\def\sig{\sigma}

\def\dt{{\Delta t}}
\def\bc{{\boldsymbol c}}
\def\bu{{\boldsymbol u}}
\def\bU{{\boldsymbol U}}
\def\bV{{\boldsymbol V}}
\def\bv{{\boldsymbol v}}
\def\bw{{\boldsymbol w}}
\def\bPhi{{\boldsymbol \Phi}}
\def\bdt{{\boldsymbol{\Delta t}}}

\def\bbL{\mathbb{L}}

\newcommand\tendsto[2]{{\xrightarrow[#2]{}#1}}

\theoremstyle{plain}
\newtheorem{theorem}{Theorem}[section]
\newtheorem{lemma}{Lemma}[section]
\newtheorem{prop}{Proposition}[section]
\newtheorem{definition}{Definition}

\theoremstyle{remark}
\newtheorem{remark}{Remark}[section]

\declaretheoremstyle[
headfont=\normalfont\bfseries, bodyfont=\normalfont, postheadspace=0.3em, headpunct=: ]{hypstyle}

\crefname{hyp}{\bf{H}}{\bf{H}}
\Crefname{hyp}{Hypothesis}{Hypotheses}

\makeatletter

\makeatother
%%%%%%%%%%%%%%%%%%%%%%%%%%%%%%%%%%%%%%%%%%%%%%%%%%%%%%%%%%%%%%%%%%%%%%%%%%%%%%%%%%%%%%%%%%%%%%%%%%%

%%%%%%%%%%%%%%%%%%%%%%%%%%%%%%%%%%%%%%%%%%%%%%%%%%%%%%%%%%%%%%%%%%%%%%%%%%%%%%%%%%%%%%%%%%%%%%%%%%%
%% TITLE

\begin{document}
\title[Finite Volume schemes for ionic liquid]{A numerical analysis focused comparison of several Finite Volume schemes for an
Unipolar Degenerated Drift-Diffusion Model}

\author[C. Canc\`es]{Cl\'ement Canc\`es}\address{Cl\'ement Canc\`es (\href{mailto:clement.cances@inria.fr}{\tt clement.cances@inria.fr}):  Inria, Univ. Lille, CNRS, UMR 8524 - Laboratoire Paul Painlev\'e, F-59000 Lille} 
\author[C. Chainais-Hillairet]{Claire Chainais-Hillairet}\address{Claire Chainais-Hillairet (\href{mailto:claire.chanais@univ-lille.fr}{\tt claire.chanais@univ-lille.fr}): 
Univ. Lille,  CNRS, UMR 8524, Inria - Laboratoire Paul Painlev\'e, F-59000 Lille} 
\author[J. Fuhrmann]{J\"urgen Fuhrmann}\address{J\"urgen Fuhrmann (\href{mailto:juergen.fuhrmann@wias-berlin.de}{\tt juergen.fuhrmann@wias-berlin.de}): Weierstrass Institute (WIAS), Mohrenstr. 39, 10117 Berlin, Germany}
\author[B. Gaudeul]{Beno\^{\i}t Gaudeul}\address{Beno\^\i t Gaudeul (\href{mailto:benoit.gaudeul@univ-lille.fr}{\tt benoit.gaudeul@univ-lille.fr}): 
Univ. Lille,  CNRS, UMR 8524, Inria - Laboratoire Paul Painlev\'e, F-59000 Lille} 
\maketitle

%%%%%%%%%%%%%%%%%%%%%%%%%%%%%%%%%%%%%%%%%%%%%%%%%%%%%%%%%%%%%%%%%%%%%%%%%%%%%%%%%%%%%%%%%%%%%%%%%%%

%%%%%%%%%%%%%%%%%%%%%%%%%%%%%%%%%%%%%%%%%%%%%%%%%%%%%%%%%%%%%%%%%%%%%%%%%%%%%%%%%%%%%%%%%%%%%%%%%%%
\begin{abstract}
In this paper, we consider an  unipolar degenerated drift-diffusion system where the relation between the concentration of the charged species $c$ and the chemical potential $h$ is $h(c)=\log \frac{c}{1-c}$. 
We design four different finite volume schemes based on four different formulations of the fluxes. We provide a stability analysis and existence results for the four schemes. 
The convergence proof with respect to the discretization parameters is established for two of them. Numerical experiments illustrate the behaviour of the different schemes. 

\end{abstract}
%%%%%%%%%%%%%%%%%%%%%%%%%%%%%%%%%%%%%%%%%%%%%%%%%%%%%%%%%%%%%%%%%%%%%%%%%%%%%%%%%%%%%%%%%%%%%%%%%%%

%%%%%%%%%%%%%%%%%%%%%%%%%%%%%%%%%%%%%%%%%%%%%%%%%%%%%%%%%%%%%%%%%%%%%%%%%%%%%%%%%%%%%%%%%%%%%%%%%%%
\section{Introduction}

% ========================================================================
\subsection{Motivation}

Unipolar drift-diffusion  models describe  the transport of  a charged
species in the presence of  a fixed or moving countercharge. They consist of the coupling of a drift-diffusion equation 
on the density of the charged species $c$ with a Poisson equation on the electric potential $\Phi$. They can be written under a general form 
as 
$$
\left\{
\begin{aligned}
&\partial_t c +\dive\left(\bJ\right)=0, \quad \bJ= -\eta(c) \nabla (h(c)+\Phi),\\
& -\lambda^2 \Delta\Phi= c +c^{dp},
\end{aligned}
\right.
$$
where $h$ is the chemical potential, $\eta$ the mobility coefficient, $\lambda$ the scaled Debye length coming from the nondimensionalization of the physical model and 
$c^{dp}$ describes the doping profile of the media.

Such models occur in a number of interesting application cases.
Charge carriers  in most classical  semiconductors exhibit a
relationship  $c=\mathcal  F(h)$,  where  $\mathcal F$  is  the  Fermi
integral  of  index  $\frac12$  which   can be approximated in the range
$-\infty < h \lessapprox 1.3$ by  the 
function  $\mathcal F(h)= \frac1{\gamma+ \exp(-h)}$ with $\gamma=0.27$
\cite{blakemore1982approximations}.
For $\gamma=1$, this relationship  corresponds to the Fermi integral of index
-1 and implies $h=\log \frac{c}{1-c}$. It is  the limit for vanishing disorder
of               the                Gauss-Fermi               integral
\cite{koprucki2013discretization,paasch2010charge}   which  is   used  to
describe organic semiconductors \cite{coehoorn2005charge}.
A similar relationship is valid for the oxygen ion concentration in a solid oxide electrolyte
\cite{VagnerEtAl2019} and in a simple model of an ionic liquid \cite{Fuhrmann16}.

While the  relationship between chemical potential and concentration
is sufficient to  describe the thermodynamic equilibrium, the description
of charge transport driven by the sum  of the  gradients of the chemical potential and the electrostatic
potential $\Phi$ needs an additional specification of the mobility coefficient $\eta$. Setting this
coefficient proportional to the concentration $c$ is common in the case of semiconductors \cite{selberherr2012analysis}.
A similar ansatz describes  the limit of large lattice mass density in solid oxide electrolytes.
It also follows from a formal reduction of an generalized Nernst-Planck model \cite{dreyer2013overcoming,dreyer2014mixture}
to the case  of a mixture of two charged species  including an infinitely mobile and charged solvent -- ionic liquids -- as performed in \cite{Fuhrmann16}.
We hint that more general  and fully consistent models  for both
solid  oxide   electrolytes  and   ionic  liquids   consider  mobility
coefficients          of           the          type          $c(1-c)$ \cite{VagnerEtAl2019,brochard1983polymer,gavish2016theory}.

In this paper, we consider that the mobility coefficient is $\eta(c)=c$ and the chemical potential $h(c)=\log  \frac{c}{1-c}$ (corresponding to 
 $\mathcal F(h)= \frac1{1+ \exp(-h)}$). Strong degeneration
described  by a  bounded dependency  of the  concentration $c$  on the
chemical potential  $h$ leads to  a number of  structural mathematical
challenges in the corresponding drift-diffusion models.  These need to
be addressed properly in numerical schemes. The consideration of this simplified model is a starting point for the study
of generalized Nernst-Planck models  for multiple ionic species in electroneutral solvents \cite{dreyer2013overcoming,dreyer2014mixture,Fuhrmann15,Fuhrmann16}. 
Moreover, the design of discretization methods
for the case where $\eta(c)=c(1-c)$  is also possible topic of further investigation following the present
paper.

\subsection{A simplified unipolar degenerate drift-diffusion model}
Let us now define the framework of the study. 
We consider the evolution of a the concentration $c$  of a charged species in a connected bounded open domain $\O$ of $\R^d$ ($d\leq 3$) with polyhedral  and Lipschitz continuous boundary $\p\O$ during a finite but arbitrary time $T>0$.
After nondimensionalization with appropriate scaling, we regard the following system of partial differential equations (PDEs). The concentration $c$ satisfies the conservation law
\be\label{eq:cons_loc} 
\partial_t c +\dive\left(\bJ\right)=0 \quad \text{in}\; (0,T) \times \O.
\ee
The flux $\bJ$ is negatively proportional to the gradient of the electrochemical potential as expressed by the expression 
\be\label{cances flux}
\bJ=-c\nabla\left(h(c)+\Phi \right) \quad \text{in}\; (0,T) \times \O, 
\ee	
where $h(c) = \log\left(\frac{c}{1-c}\right)$ is the chemical potential. In what follows, we consider that the electrostatic potential $\Phi$ is related to
space charge density
thanks to the Poisson equation
\be\label{Poisson equation}
-\Delta \Phi=c+c^{dp}  \quad \text{in}\; (0,T) \times \O,
\ee
which means that the Debye length is set to 1. Extension to general Debye length is straightforward.
The doping profile $c^{dp}$ is assumed to be constant w.r.t. time and to be bounded, i.e., $c^{dp} \in L^\infty(\O)$.

One interpretation of $c$ is the concentration of majority carriers (holes)  in a p-type organic semiconductor with constant in time doping. 
Another interpretation of $c$ is the cation concentration in an ionic liquid following the formal approach introduced in \cite{Fuhrmann16}.

The system is supplemented with the prescription of the initial concentration
\be\label{eq:c^0}
c_{|_{t=0}} = c^0 \in L^\infty(\O) \quad \text{with}\quad 0 \leq c^0 \leq 1\quad \text{and}\quad 0 < \ov c = \oint_\O c^0\d\x < 1, 
\ee
and of boundary conditions. The choice of the boundary conditions may depend on the targeted application: organic semiconductor or ionic liquid. 
For the analysis purpose, we consider boundary conditions which are well adapted to the ionic liquid model. 
Other boundary conditions will also be considered in the numerical simulations in Section~\ref{sec:numerics}.
There are no-flux boundary conditions for the concentration:
\be\label{eq:no-flux}
\bJ \cdot \n = 0 \quad \text{on}\; (0,T) \times \p\O.
\ee
And concerning the Poisson equation~\eqref{Poisson equation}, it is supplemented with inhomogeneous Dirichlet boundary 
conditions on a part $\Gamma_D$ of $\p\O$, and by homogeneous Neumann boundary condition on the remaining part 
$\Gamma_N = \p\O \setminus \Gamma_D$ of the boundary: 
\be\label{eq:BC_Phi}
\Phi = \Phi^D \quad \text{on}\; (0,T) \times \Gamma_D, \qquad 
\grad \Phi \cdot \n = 0 \quad \text{on}\; (0,T) \times \Gamma_N. 
\ee
Throughout the paper, we assume that $\Phi^D$ is defined on the whole domain $\O$ and does not depend on time, with $\Phi^D \in H^{1}(\O) \cap L^\infty(\O)$.

The goal of this paper is to study and compare several different Finite Volume schemes for the system~\eqref{eq:cons_loc}--\eqref{eq:BC_Phi}.
They are based on various reformulations of the flux $\bJ$. Indeed, we may introduce either  
the so-called excess chemical potential $\nu(c) = h(c) - \log(c) = - \log(1-c)$, or the activity and the inverse activity 
coefficient respectively defined by $a(c)=e^{h(c)} = \frac{c}{1-c}$, and $\beta(c)=\frac{c}{a(c)}=1-c$, or the diffusion enhancement $r(c)= - \log(1-c)$ satisfying $r'(c) = ch'(c)$.  Then the flux $\bJ$, initially defined by \eqref{cances flux}, rewrites
\begin{align}
&{\bJ}=-\nabla c -c\nabla \left(\Phi+\nu(c)\right),\label{Sedan flux}\\
&\phantom{\bJ}=-\beta(c)(\nabla a(c) +a(c)\nabla \Phi), \label{Jurgen flux}\\
&\phantom{\bJ}=-r'(c)\nabla c-c\nabla \Phi,\label{Marianne flux}
\end{align}
These formulations \eqref{cances flux}, \eqref{Sedan flux}, \eqref{Jurgen flux} and \eqref{Marianne flux} lead to different schemes that we aim to compare from a numerical analysis point of view. We may notice that the flux ${\bJ}$ also rewrites 
\begin{equation}\label{Weak solution flux}
    \bJ=-\nabla r(c)-c\nabla \Phi.
\end{equation}
This last formulation will be used to define the weak solution to  \eqref{eq:cons_loc}--\eqref{eq:BC_Phi}.

Before going to the discretization of the problem, let us highlight the entropy structure of system~\eqref{eq:cons_loc}--\eqref{eq:BC_Phi}, 
which plays a central role in what follows. 

% =================================================================
\subsection{Entropy structure and weak solutions}
The goal of this section is to shortly depict the gradient flow structure of the system~\eqref{eq:cons_loc}--\eqref{eq:BC_Phi}. 
We stay here at a formal level, and remain sloppy about regularity issues. The solutions $(c, \Phi)$ to \eqref{eq:cons_loc}--\eqref{eq:BC_Phi} 
are supposed to be regular enough so that the following calculations are justified. 
Define the mixing entropy density 
\[H(c) = c\log(c) + (1-c) \log(1-c),\] 
which is an antiderivative of $h$, then the electrochemical energy is given by 
\be\label{eq:E}
E(c,\Phi) = \int_\O \left\{ H(c) + \frac12 |\grad \Phi|^2\right\}\d\x - \int_{\Gamma_D} \Phi^D \grad \Phi \cdot \n \d {\boldsymbol \gamma}.
\ee
The next proposition shows that the electrochemical energy is a Lyapunov functional. Moreover, the dissipation rate for the 
energy is explicitly given. 
\begin{prop}\label{prop:E}
Let $(c,\Phi)$ be a smooth solution to~\eqref{eq:cons_loc}--\eqref{eq:BC_Phi}, with $c$ bounded away from $0$ and $1$, then 
\[
\frac{\d}{\d t}E(c,\Phi) + \int_\O c \left|\grad (h(c) + \Phi) \right|^2 \d\x = 0.
\]
\end{prop}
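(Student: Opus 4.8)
The plan is to differentiate $E(c,\Phi)$ in time, substitute the PDEs \eqref{eq:cons_loc}--\eqref{Poisson equation}, integrate by parts using the boundary conditions \eqref{eq:no-flux}--\eqref{eq:BC_Phi}, and recognize the flux \eqref{cances flux}. Writing $E(c,\Phi) = \int_\O H(c)\,\d\x + \frac12\int_\O |\grad\Phi|^2\,\d\x - \int_{\Gamma_D}\Phi^D\grad\Phi\cdot\n\,\d\bgamma$, the first term contributes $\int_\O H'(c)\,\p_t c\,\d\x = \int_\O h(c)\,\p_t c\,\d\x$ since $H' = h$. Using \eqref{eq:cons_loc} this is $-\int_\O h(c)\,\dive(\bJ)\,\d\x = \int_\O \grad h(c)\cdot\bJ\,\d\x$ after integrating by parts, the boundary term vanishing by the no-flux condition \eqref{eq:no-flux}.

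For the electrostatic part, I would first note that $\p_t\Phi$ solves, formally, $-\Delta(\p_t\Phi) = \p_t c$ with $\p_t\Phi = 0$ on $\Gamma_D$ (since $\Phi^D$ is time-independent) and $\grad(\p_t\Phi)\cdot\n = 0$ on $\Gamma_N$. Then $\frac{\d}{\d t}\bigl(\frac12\int_\O|\grad\Phi|^2\bigr) - \frac{\d}{\d t}\int_{\Gamma_D}\Phi^D\grad\Phi\cdot\n\,\d\bgamma = \int_\O\grad\Phi\cdot\grad(\p_t\Phi)\,\d\x - \int_{\Gamma_D}\Phi^D\grad(\p_t\Phi)\cdot\n\,\d\bgamma$. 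The trick is to integrate by parts in the bulk term: $\int_\O\grad\Phi\cdot\grad(\p_t\Phi)\,\d\x = -\int_\O\Delta\Phi\,\p_t\Phi\,\d\x + \int_{\p\O}(\grad\Phi\cdot\n)\,\p_t\Phi\,\d\bgamma$. On $\Gamma_N$ we have $\grad\Phi\cdot\n = 0$, and on $\Gamma_D$ we have $\p_t\Phi = 0$, so the boundary integral drops entirely; alternatively, integrate the other way to move the derivative onto $\p_t\Phi$ and use $-\Delta(\p_t\Phi) = \p_t c$. Either route gives $\int_\O\grad\Phi\cdot\grad(\p_t\Phi)\,\d\x = \int_\O\Phi\,\p_t c\,\d\x$, with the $\Gamma_D$ contribution $\int_{\Gamma_D}\Phi^D\grad(\p_t\Phi)\cdot\n$ exactly cancelled against a matching boundary term (this is why $\Phi^D$ sits in the energy). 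Hence the electrostatic part contributes $\int_\O\Phi\,\p_t c\,\d\x = -\int_\O\Phi\,\dive(\bJ)\,\d\x = \int_\O\grad\Phi\cdot\bJ\,\d\x$ after another integration by parts, again using \eqref{eq:no-flux}.

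Adding the two contributions gives $\frac{\d}{\d t}E(c,\Phi) = \int_\O\grad\bigl(h(c)+\Phi\bigr)\cdot\bJ\,\d\x$, and inserting $\bJ = -c\grad(h(c)+\Phi)$ from \eqref{cances flux} yields $\frac{\d}{\d t}E(c,\Phi) = -\int_\O c\,|\grad(h(c)+\Phi)|^2\,\d\x$, which is the claimed identity. The hypothesis that $c$ is bounded away from $0$ and $1$ guarantees $h(c) = \log\frac{c}{1-c}$ and $H(c)$ are smooth functions of $c$, so that $H'(c) = h(c)$ and all the manipulations of $\grad h(c)$ are legitimate. The only genuinely delicate point is the bookkeeping of the boundary terms on $\Gamma_D$ versus $\Gamma_N$: one must carefully exploit that $\p_t\Phi$ vanishes on $\Gamma_D$ (time-independence of $\Phi^D$) while the normal derivative of $\Phi$ vanishes on $\Gamma_N$, so that the boundary integral $\int_{\p\O}(\grad\Phi\cdot\n)\p_t\Phi$ vanishes and the explicit $\Gamma_D$ term in $E$ is precisely what is needed to close the computation; everything else is a routine integration by parts. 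Since the statement is explicitly at a formal level, I would not dwell on the regularity needed to justify exchanging $\frac{\d}{\d t}$ with the spatial integrals.
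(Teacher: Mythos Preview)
Your proof is correct and follows essentially the same approach as the paper: differentiate $E$, use the Poisson equation and an integration by parts to reduce the electrostatic contribution to $\int_\O \Phi\,\p_t c\,\d\x$, combine with $\int_\O h(c)\,\p_t c\,\d\x$, and then apply the conservation law together with the flux formula~\eqref{cances flux}. One small slip: your ``first route'' (moving the Laplacian onto $\Phi$) actually produces $\int_\O (c+c^{dp})\,\p_t\Phi\,\d\x$, not $\int_\O \Phi\,\p_t c\,\d\x$, and leaves the $\Gamma_D$ term from the energy uncancelled; it is your ``second route'' (moving the Laplacian onto $\p_t\Phi$, which is exactly what the paper does) that closes the computation.
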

\begin{proof}
We notice first that, since $\Phi^D$ does not depend on time,
$$
\frac{\d}{\d t}E(c,\Phi)=\int_\O (h(c) \p_t c + \grad \Phi\cdot \p_t\grad \Phi) \d\x -
\int_{\Gamma_D} \Phi^D \p_t \grad \Phi \cdot \n \d\boldsymbol{\gamma}.
$$
Then we apply the Gauss theorem and we use the Poisson equation \eqref{Poisson equation} with a constant doping profile, in order to get 
$$
\frac{\d}{\d t}E(c,\Phi)=\int_\O (h(c)+\Phi)\p_t c. 
$$
Multiplying the conservation law \eqref{eq:cons_loc} by $h(c)+\Phi$ and 
integrating over the domain $\Omega$ yields 
$$
\int_\O \p_t c (h(c)+\Phi)=-\int_\O  c \left|\grad (h(c) + \Phi) \right|^2 \d\x,
$$
thanks to the no-flux boundary condition~\eqref{eq:no-flux}. It concludes the proof of 
Proposition~\ref{prop:E}.
\end{proof}

Let $c\in L^\infty(\O;[0,1])$, we denote by $\Phi[c]$ the 
unique solution to~\eqref{Poisson equation}. One can easily check that the energy functional $c \mapsto E(c, \Phi[c])$ is bounded on $L^\infty(\O;[0,1])$. Indeed, $H$ takes values in $[-\log 2, 0]$ and the bounds on the electrical energy can be obtained by multiplying the Poisson equation by $\Phi-\Phi^D$ and $\Phi$ and integrating over $\O$.    
Therefore, $E(c(t), \Phi(t))$ is finite for all $t >0$, whence a $L^\infty((0,T);H^1(\O))$ estimate on $\Phi$.
We also deduce from Proposition~\ref{prop:E} that the total energy dissipation is bounded, i.e. 
\be\label{eq:dissip}
\int_0^T \int_\O c  \left|\grad (h(c) + \Phi) \right|^2 \d\x \d t \leq C
\ee
for some $C$ uniform with respect to the final time horizon $T$.
Using again that $0 \leq c \leq 1$, we deduce from \eqref{eq:dissip} that 
\be\label{eq:L2H1_r}
\int_0^T \int_\O |\grad r(c)|^2 \d\x\d t \leq \int_0^T \int_\O c  \left|\grad h(c) \right|^2 \d\x \d t \leq C. 
\ee

The aforementioned $L^\infty((0,T);H^1(\O))$ estimate on the potential $\Phi$ and Estimate \eqref{eq:L2H1_r} on $r(c)$ suggest a notion 
of weak solution which is based on the expression~\eqref{Weak solution flux} of the flux $\bJ$. 
In what follows, we denote the vector space $\cH_{\Gamma^D}=\lbrace f\in H^1(\Omega), f_{|_{\Gamma_D}}=0\rbrace$ 
and $Q_T= (0,T)\times\O$.

\begin{definition}\label{Def:weaksol}
A couple $(c,\Phi)$ is a \emph{weak solution of \eqref{eq:cons_loc}--\eqref{eq:BC_Phi}} if 
\begin{itemize}
\item[$\bullet$]  $c \in L^\infty((0,T);[0,1])$ with $r(c) \in L^2((0,T);H^1(\O))$, and $\Phi - \Phi^D \in L^\infty((0,T),\cH_{\Gamma^D})$;
\item[$\bullet$] For all $\varphi \in C^\infty_c([0,T)\times \ov \O)$, there holds 
\be\label{eq:weak_c}
\iint_{Q_T} c \p_t \varphi \d\x\d t + \int_\O c^0 \varphi(0,\cdot) \d\x - \iint_{Q_T}\left( r(c) + c \grad \Phi \right) \cdot \grad \varphi \d\x\d t = 0;
\ee
\item[$\bullet$] For all $\psi \in \cH$ and almost all $t\in (0,T)$, there holds 
\be\label{eq:weak_Phi}
\int_\O \grad \Phi(t,\x) \cdot \grad \psi(\x) \d\x = \int_\O (c(t,\x)+c^{dp}(\x)) \psi(\x) \d\x.
\ee
\end{itemize}
\end{definition}

The goal of this paper is to compare from a numerical analysis point of view several different 
numerical schemes to approximate the solutions to~\eqref{eq:cons_loc}--\eqref{eq:BC_Phi}. 
We pay a particular attention to the preservation at the discrete level of the key properties of the continuous model, 
in particular concerning the preservation of the physical bounds $0 \le c \leq 1$ and the energy/energy dissipation 
relation highlighted in Proposition~\ref{prop:E}. The definition of the Finite Volume approximation is detailed 
in the next section.

% ==========================================================
% 			Finite Volume Approximations
% ==========================================================
\section{Finite Volume approximations}

This section is organized as follows. First in Section~\ref{ssec:mesh}, 
we state the requirements on the mesh and fix some notations. Then in Section~\ref{ssec:scheme}, 
we describe the common basis to the different schemes to be studied in this paper. 
All the methods presented in this paper rely on so-called two-point flux approximations, but 
four different schemes are introduced in Section~\ref{ssec:fluxes} based on the formulations
\eqref{cances flux} to \eqref{Marianne flux} of the flux $\bJ$. Then in Section~\ref{ssec:main}, 
we state our two main results. The first one, namely Theorem~\ref{thm:main1}, focuses on 
the case of a fixed mesh. We are interested 
in the existence of a solution to the nonlinear system corresponding to the schemes, and to 
the dissipation of the energy at the discrete level. More precisely, one establishes 
that the all the studied schemes satisfy a discrete counterpart to Proposition~\ref{prop:E}.
Our second main result, namely Theorem~\ref{thm:main2}, is devoted to the convergence of the 
scheme as the time step and the mesh size tend to $0$. 

% ============================================================
\subsection{Discretization of $(0,T) \times \O$}\label{ssec:mesh}

In this paper, we perform a parallel study of four numerical schemes based on 
two-point flux approximation (TPFA) finite volume schemes. As explained 
in~\cite{Droniou-review,Tipi}, this approach appears to be very efficient as soon 
as the continuous problem to be solved numerically are isotropic and one has the 
freedom to choose a suitable mesh fulfilling the so-called orthogonality condition~\cite{Herbin95, EGH00}.
We recall here the definition of such a mesh, which is illustrated by Figure~\ref{fig:mesh}.

\begin{definition}
\label{def:mesh}
An \emph{admissible mesh of $\O$} is a triplet $\left(\Tt, \Ee, {(\x_K)}_{K\in\Tt}\right)$ such that the following conditions are fulfilled. 
\begin{enumerate}[(i)]
\item Each control volume (or cell) $K\in\Tt$ is non-empty, open, polyhedral and convex. We assume that 
\[
K \cap L = \emptyset \quad \text{if}\; K, L \in \Tt \; \text{with}\; K \neq L, 
\qquad \text{while}\quad \bigcup_{K\in\Tt}\ov K = \ov \O. 
\]
\item Each face $\sig \in \Ee$ is closed and is contained in a hyperplane of $\R^d$, with positive 
$(d-1)$-dimensional Hausdorff (or Lebesgue) measure denoted by $m_\sig = \cH^{d-1}(\sig) >0$.
We assume that $\cH^{d-1}(\sig \cap \sig') = 0$ for $\sig, \sig' \in \Ee$ unless $\sig' = \sig$.
For all $K \in \Tt$, we assume that 
there exists a subset $\Ee_K$ of $\Ee$ such that $\p K =  \bigcup_{\sig \in \Ee_K} \sig$. 
Moreover, we suppose that $\bigcup_{K\in\Tt} \Ee_K = \Ee$.
Given two distinct control volumes $K,L\in\Tt$, the intersection $\ov K \cap \ov L$ either reduces to a single face
$\sig  \in \Ee$ denoted by $K|L$, or its $(d-1)$-dimensional Hausdorff measure is $0$. 
\item The cell-centers $(\x_K)_{K\in\Tt}$ are pairwise distinct with $\x_K \in K$, and are such that, if $K, L \in \Tt$ 
share a face $K|L$, then the vector $\x_L-\x_K$ is orthogonal to $K|L$.
\item For the boundary faces $\sig \subset \p\O$, we assume that either $\sig \subset \Gamma_D$ or $\sig \subset \ov \Gamma_N$.
For $\sig \subset \p\O$ with $\sig \in \Ee_K$ for some $K\in \Tt$, we assume additionally that 
there exists $\x_\sig \in \sig$ such that $\x_\sig - \x_K$ is orthogonal to $\sig$.

\end{enumerate} 
\end{definition}

\begin{figure}[htb]
\centering
\resizebox{7cm}{!}{\input{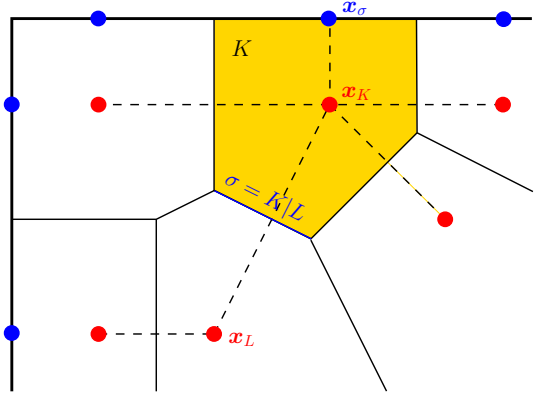}}
\caption{Illustration of an admissible mesh as in Definition~\ref{def:mesh}.}
\label{fig:mesh}
\end{figure}

We denote by $m_K$ the $d$-dimensional Lebesgue measure of the control volume $K$.
The set of the faces is partitioned into two subsets: the set $\Ee_{\rm int}$ of the interior faces defined by 
$
\Ee_{\rm int}= \left\{ \sig \in \Ee\; \middle| \; \sig = K|L\; \text{for some}\; K,L \in \Tt\right\}, 
$
and the set $\Ee_{\rm ext}$ of the exterior faces defined by 
$
\Ee_{\rm ext}= \left\{ \sig \in \Ee\; \middle| \; \sig \subset \p\O\right\},
$
which can also be partitioned into 
$\Ee^D = \{ \sig \subset \Gamma_D\}$ and $\Ee^N = \{\sig \subset \ov \Gamma_N\}$. For a given control volume $K\in\Tt$, we also define $\Ee_{K,{\rm int}}$ the set of its faces which belong to $\Ee_{\rm int}$. For such a face $\sigma\in \Ee_{K,{\rm int}}$, we may write $\sigma=K|L$, meaning that $\sigma=\ov K \cap \ov L$ .

Given $\sig \in \Ee$, we denote by 
\[
d_\sig = \begin{cases}
|\x_K - \x_L| & \text{if}\; \sig = K|L \in \Ee_{\rm int}, \\
|\x_K - \x_\sig| & \text{if}\; \sig \in \Ee_{\rm ext}, 
\end{cases}
\qquad 
\text{and by}\quad \tau_\sig = 
\frac{m_\sig}{d_\sig} 
\]
We finally introduce the size $h_\Tt$ and the regularity $\zeta_\Tt$ (which is assumed to be positive) of a discretization $(\Tt, \Ee, (\x_K)_{K\in\Tt})$ of $\O$ by setting 
\[
h_\Tt = \max_{K \in \Tt}\;{\rm diam}(K), \qquad \zeta_\Tt = \min_{K\in\Tt}\min_{\sig\in\Ee_K} \frac{d(x_K,\sigma)}{d_\sig}.
\]

Concerning the time discretization of $(0,T)$, we consider an increasing finite family of times $0 = t_0 < t_1 < \dots, < t_N = T$. 
We denote by $\dt_n = t_{n}-t_{n-1}$ for $1\leq n\leq N$, by $\boldsymbol{\Delta t} = \left(\dt_n\right)_{1\leq n \leq N}$, 
and by $\ov {\boldsymbol{\Delta t}} = \max_{1 \leq n \leq N} \dt_n$.

% ============================================================
\subsection{A common basis for the Finite Volume schemes}\label{ssec:scheme}

All the numerical schemes studied in this paper are based on TPFA Finite Volumes. 
The initial data $c_0$ is discretized into $\left(c^0_K\right)_{K\in\Tt} \in \RR^\Tt$ by setting
\be\label{eq:cK0}
c_K^0 = \frac1{m_K} \int_K c^0(\x) \d\x, \qquad \forall K \in \Tt, 
\ee
while the doping profile $c^{dp}$ is discretized into $\left(c^{dp}_K\right)_{K\in\Tt} \in \RR^\Tt$ by
\be\label{eq:cKdp}
c_K^{dp} = \frac1{m_K} \int_K c^{dp}(\x) \d\x, \qquad \forall K \in \Tt.
\ee
Assume that $\bc^{n-1}=\left(c_{K}^{n-1}\right)_{K\in\Tt}$ is given for some $n > 0$, then we have to define how to compute 
$(\bc^n,\bPhi^n)=\left(c_{K}^{n}, \Phi_K^n\right)_{K\in\Tt}$. 

First, we introduce some notations. For all $K\in \Tt$ and all $\sig \in \Ee_K$, we define the mirror values $c_{K\sig}^n$ and $\Phi_{K\sig}^n$ 
of $c_K^n$ and $\Phi_K^n$ respectively across $\sig$ by setting 
\be\label{eq:mirror}
c_{K\sig}^n = \begin{cases}
c_L^n  &\text{if}\; \sig = K|L \in \Ee_{\rm int},\\
c_K^n & \text{if}\; \sig \in \Ee_{\rm ext}, 
\end{cases}
\quad 
\Phi_{K\sig}^n = \begin{cases}
\Phi_L^n & \text{if}\; \sig = K|L \in \Ee_{\rm int}, \\
\Phi_K^n & \text{if}\; \sig \in \Ee^N,\\
\Phi_\sig^n = \frac{1}{m_\sig}\int_\sig \Phi^D \d\boldsymbol \gamma & \text{if}\; \sig \in \Ee^D.  
\end{cases}
\ee
Given $\bu =\left(u_K\right)_{K\in\Tt} \in \R^\Tt$, we define the oriented and absolute jumps of $\bu$ across any edge by 
\[
D_{K\sig}\bu = u_{K\sig} - u_K, \quad D_\sig\bu = |D_{K\sig} \bu|, \qquad \forall K \in \Tt, \; \forall \sig \in \Ee_K.
\] 

We consider a backward Euler scheme in time and a TPFA finite volume scheme in space. 
It is written as follows:
\begin{subequations}\label{scheme}
\be\label{eq:scheme_Phi}
-\sum_{\sig \in \Ee_K}\tau_\sig D_{K\sig}\bPhi^n
= m_K \left(c_{K}^n + c^{dp}_K\right), \qquad \forall K \in \Tt. 
\ee
\be\label{eq:scheme_c}
m_K\frac{c_K^n - c_K^{n-1}}{\dt_n}  + \sum_{\sig\in\Ee_K}F_{K\sig}^n = 0, \qquad \forall K \in \Tt, 
\ee
\end{subequations}
where $F_{K\sig}^n$ should be a conservative and consistent approximation of $\frac1{\dt_n}\int_{t_{n-1}}^{t_n}\int_\sig \bJ \cdot \n_{K\sig}$ ($\n_{K\sig}$ denotes 
the normal to $\sig$ outward $K$).
The explicit formulas relating the numerical fluxes $F_{K\sig}^n$ to the primary unknowns are now the only remaining degree of freedom. Four possible choice are given in the next section.

% ============================================================
\subsection{Numerical fluxes for the conservation of the chemical species}\label{ssec:fluxes}

In order to close the system~\eqref{eq:scheme_Phi}--\eqref{eq:scheme_c}, it remains to define the numerical fluxes $F_{K\sig}^n$. 

Due to the no-flux boundary condition \eqref{eq:no-flux}, we impose, in all the cases,
\be\label{eq:noflux_bc}
F_{K\sig}^n = 0, \quad \forall K \in \Tt, \forall \sig \in \Ee_K\cap \Ee_{\rm ext}.
\ee
The inner fluxes are defined with a function ${\mathcal F}$ of the primary unknowns $(c_K^n, c_L^n, \Phi_K^n,\Phi_L^n)$:
\be\label{eq:fluxint}
F_{K\sig}^n = \tau_\sig{\mathcal F}(c_K^n, c_L^n, \Phi_K^n,\Phi_L^n),\quad \forall K\in\Tt, \forall \sigma=K|L.
\ee
 We discuss now four strategies that are based on the four expressions~\eqref{cances flux}, 
\eqref{Sedan flux}, \eqref{Jurgen flux}, and \eqref{Marianne flux}. They lead to different formulas for ${\mathcal F}$.
Three of the discrete fluxes are extensions of the Scharfetter-Gummel scheme~\cite{SG69} and let the Bernoulli function 
$B(u)= \frac{u}{e^u-1}$, with $B(0)=1$, appear in their definition. 

All the functions ${\mathcal F}$ defined below verify 
$$
{\mathcal F}(c_K,c_L,\Phi_K,\Phi_L)=-{\mathcal F}(c_L,c_K,\Phi_L,\Phi_K) \quad \forall (c_K,c_L,\Phi_K,\Phi_L)\in (0,1)\times (0,1)\times \R\times \R,
$$
so that the numerical fluxes are locally conservative, which means 
\be\label{eq:cons_F}
F_{K\sig}^n +F_{L\sig}^n=0\quad \forall \sig=K|L \in \Ee_{\rm int}.
\ee

\subsubsection{The centered flux}\label{sssec:centered}
The so-called~{centered flux} is derived from formula~\eqref{cances flux}, which suggests the following definition of ${\mathcal F}$: 
\be\label{eq:centered_flux}
{\mathcal F}(c_K,c_L,\Phi_K,\Phi_L) = - \frac{c_K + c_{L}}2 D_{K\sig} \left(h(\bc) + \bPhi \right).
\ee
The associate  flux can be seen as a particular case in the TPFA context of the fluxes introduced in~\cite{CG_VAGNL, CCHK_CMAM, Cances_OGST,CNV_HAL} 
in various multipoint flux approximations (MPFA) or finite element contexts. In opposition to the three next schemes, the centered scheme is not based on the 
Scharfetter-Gummel scheme. We can notice that, even if the relation~\eqref{Weak solution flux} between the flux and the concentration would be linear (i.e., if 
$h(c)= \log(c)$ so that $r(c) = c$), ${\mathcal F}$ is nonlinear with respect to $c_K$ and $c_L$ and also singular near 0.

\subsubsection{The Sedan flux}\label{sssec:Sedan}
The second flux we introduce is named {Sedan} after the eponymous code SEDAN III~\cite{SEDAN}. Formula~\eqref{Sedan flux} for the flux $\bJ$ 
suggests to use a classical Scharfetter-Gummel scheme, but for a modified potential $\Phi+\nu(c)$ instead of only $\Phi$, leading to the following definition of ${\mathcal F}$:
\be\label{eq:SEDAN_flux}
{\mathcal F}(c_K,c_L,\Phi_K,\Phi_L)=  \Bigl\{
B\left(D_{K\sigma}(\bPhi + \nu(\bc)) \right) c_K - B\left(-D_{K\sigma}(\bPhi + \nu(\bc))\right)c_{L}\Bigl\} .
\ee
\begin{remark}\label{rem:Sedan}
We notice that the Sedan flux defined by \eqref{eq:SEDAN_flux} satisfies 
$$
{\mathcal F}(c_K,c_L,\Phi,\Phi)=r(c_K)-r(c_L), \quad \forall (c_K,c_L)\in (0,1)\times (0,1),\  \forall \Phi\in \R.
$$
It means that, when ${\mathbf J}=-\nabla r(c)$, we recover the classical two-point flux approximation:
$$
F_{K\sig}^n =\tau_\sig (r(c_K^n)-r(c_L^n)),\quad  \forall K\in\Tt, \forall \sigma=K|L.
$$
\end{remark}

\subsubsection{The activity based flux}\label{sssec:activity}
The {activity based flux} we discuss now is a restriction to our simplified model of the flux introduced in~\cite{Fuhrmann15, FG_FVCA8}.
It relies on the expression~\eqref{Jurgen flux} of the flux $\bJ$. Assume that $a(c)$ and $\beta(c)$ are independent one from another 
(even though this is of course not true), then the flux $\bJ$ is linear w.r.t. $a(c)$, while $\beta(c)$ is  a multiplicative factor.
This suggests to choose a particular average for $\beta(c)$ ---here the arithmetic mean--- and to apply the Scharfetter-Gummel scheme 
in order to approximate $\grad a(c) +a(c)\grad \Phi$. This yields 
\be\label{eq:activity_flux}
{\mathcal F}(c_K,c_L,\Phi_K,\Phi_L)= \frac{\beta(c_K)+\beta(c_{L})}{2}
\Bigl\{ B(D_{K\sigma}\bPhi)a(c_K)-B(-D_{K\sigma}\bPhi)a(c_{L})
\Bigl\}.
\ee

\subsubsection{The Bessemoulin-Chatard flux}\label{sssec:Marianne}
The last numerical flux we consider here is named {Bessemoulin-Chatard flux} after the author's name of~\cite{BC12}. 
Formula~\eqref{Marianne flux} for the flux $\bJ$ suggests that, up to the introduction of a variable diffusion coefficient approximating 
the quantity $r'(c)$ per face, one can use the Scharfetter-Gummel scheme. Following~\cite{BC12}, 
the approximation $dr(c_K,c_L)$ of $r'(c)$ is defined as 
\[
dr(c_K,c_L) = \begin{cases}
\ds\frac{h(c_K)-h(c_{L})}{\log(c_K)-\log(c_{L})} &\text{if}\; c_K \neq  c_L, \\
r'(c_K) & \text{if}\; c_K = c_{L}.
\end{cases}
\]
This leads to the following definition of ${\mathcal F}$:
\be\label{eq:Marianne_flux}
{\mathcal F}(c_K,c_L,\Phi_K,\Phi_L)=  dr(c_K,c_L)
\left\{B\left(\frac{D_{K\sigma}\bPhi}{dr(c_K,c_L)}\right)c_K-B\left(-\frac{D_{K\sigma}\bPhi}{dr(c_K,c_L)}\right)c_{L}\right\}. 
\ee

% ============================================================
\subsection{Main results and organisation of the paper}\label{ssec:main}

We have introduced four schemes defined by \eqref{eq:cK0}--\eqref{eq:fluxint}, supplemented with one of the four definitions of ${\mathcal F}$: \eqref{eq:centered_flux}, \eqref{eq:SEDAN_flux}, \eqref{eq:activity_flux}, or \eqref{eq:Marianne_flux}.
Besides numerical comparisons between the different approaches ---this will be the purpose of Section~\ref{sec:numerics}---, 
we aim at proposing shared pieces of numerical analysis for all the schemes. 

All the four schemes proposed above yield a nonlinear system to be solved at each time step. 
The first theorem proven is this paper concerns the existence of discrete solutions for a given mesh, 
and the preservation of the physical bounds: boundedness of the concentration between $0$ and $1$, 
decay of the energy. The discrete energy functional $E_\Tt$ is the discrete counterpart of the continuous energy 
functional $E$, namely
\be\label{eq:E_T}
E_\Tt(\bc^n,\bPhi^n) = \sum_{K\in\Tt}m_K H(c_K^n) + \frac12 \sum_{\sig\in\Ee}
\tau_\sig \left(D_{\sig} \bPhi^n\right)^2 - \sum_{K\in\Tt}\sum_{\sig\in\Ee^D\cap \Ee_K} \tau_\sig \Phi_{\sig}^D D_{K\sig} \bPhi^n.
\ee
As stated in Theorem~\ref{thm:main1} below, the nonlinear systems corresponding to all the four schemes 
admit solutions which preserve the physical bounds on the concentrations and the decay of the energy. 
The proof of Theorem~\ref{thm:main1} will be the purpose of Section~\ref{sec:existence}.

\begin{theorem}\label{thm:main1}
Let $(\Tt,\Ee,\left(\x_K\right)_{K\in\Tt})$ be an admissible mesh and let $\bc^0$ be defined by~\eqref{eq:cK0}. Then, for all $1\leq n\leq N$, the nonlinear system of equations \eqref{eq:mirror}--\eqref{eq:fluxint}, supplemented either with \eqref{eq:centered_flux}, \eqref{eq:SEDAN_flux}, \eqref{eq:activity_flux}, or \eqref{eq:Marianne_flux}, has a solution $(\bc^{n}, \bPhi^{n})\in [0,1]^{\Tt} \times \R^{\Tt}$.
Moreover, the solution to the scheme satisfies, for all $1\leq n\leq N$,
$$
E_\Tt(\bc^{n,},\bPhi^{n}) \leq E_\Tt(\bc^{n-1},\bPhi^{n-1})\mbox{ and }
0<c_K^{n}<1,\quad \forall K\in\Tt.
$$
\end{theorem}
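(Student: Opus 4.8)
The plan is to treat the three ingredients --- existence of a discrete solution, the physical bounds $0<c_K^n<1$, and the energy decay --- together, since the natural existence argument (a topological degree or Brouwer-type fixed point argument) must be run on a truncated problem whose a priori estimates are exactly the bounds and the energy inequality. First I would fix $n$ and assume $\bc^{n-1}\in[0,1]^\Tt$ is known. I would note that for any fixed $\bc^n\in[0,1]^\Tt$, the linear system \eqref{eq:scheme_Phi} for $\bPhi^n$ is invertible (it is the standard TPFA Laplacian with a nonempty Dirichlet part $\Ee^D$, hence coercive on $\R^\Tt$), so $\bPhi^n=\bPhi^n[\bc^n]$ is uniquely determined and depends continuously on $\bc^n$. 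This reduces the scheme to a nonlinear system $\mathcal{G}(\bc^n)=0$ for $\bc^n\in[0,1]^\Tt$ alone.

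\textbf{Energy estimate and the key monotonicity identity.} The heart of the argument is a discrete analogue of Proposition~\ref{prop:E}. Testing \eqref{eq:scheme_c} against $h(c_K^n)+\Phi_K^n$ and summing over $K$, using the conservativity \eqref{eq:cons_F} to perform the discrete integration by parts, I would obtain
\[
\sum_{K\in\Tt} m_K\bigl(c_K^n-c_K^{n-1}\bigr)\bigl(h(c_K^n)+\Phi_K^n\bigr)
+ \sum_{\sig=K|L} F_{K\sig}^n\, D_{K\sig}\bigl(h(\bc^n)+\bPhi^n\bigr) = 0 .
\]
For the first sum, convexity of $H$ gives $(c_K^n-c_K^{n-1})h(c_K^n)\ge H(c_K^n)-H(c_K^{n-1})$, and the electric part is handled exactly as in the continuous proof: using \eqref{eq:scheme_Phi} and the elementary algebraic identity for the discrete Dirichlet form one rewrites $\sum_K m_K(c_K^n-c_K^{n-1})\Phi_K^n$ in terms of $E_\Tt(\bc^n,\bPhi^n)-E_\Tt(\bc^{n-1},\bPhi^{n-1})$ minus the (nonnegative) squared increment of $\bPhi$. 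For the second sum I need, for each of the four flux choices, the \emph{dissipativity inequality} $F_{K\sig}^n\,D_{K\sig}(h(\bc^n)+\bPhi^n)\ge 0$ (ideally with a coercive lower bound). This is immediate for the centered flux since $F_{K\sig}^n=-\tau_\sig\frac{c_K+c_L}{2}D_{K\sig}(h(\bc)+\bPhi)$; for the three Scharfetter--Gummel-type fluxes it follows from the standard sign property of the Bernoulli function $B$, after checking in each case that $D_{K\sig}(h(\bc)+\bPhi)$ has the opposite sign to the flux --- this uses that $h(c)=\log c+\nu(c)$ and that the ``modified potential'' increments appearing inside $B$ combine correctly with the $\log c$ part. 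Putting these together yields $E_\Tt(\bc^n,\bPhi^n)\le E_\Tt(\bc^{n-1},\bPhi^{n-1})$ for any genuine solution, and hence a uniform bound on $\sum_\sig\tau_\sig(D_\sig\bPhi^n)^2$ and on $\sum_K m_K H(c_K^n)$.

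\textbf{Existence via truncation/degree, and strictness of the bounds.} Since $h$ is singular at $0$ and $1$, I would set up the existence argument on a truncated problem: replace $h$ by a Lipschitz truncation $h_\varepsilon$ (linear outside $[\varepsilon,1-\varepsilon]$), keep $\bPhi^n[\bc^n]$ defined as above, and show the truncated system has a solution $\bc^n_\varepsilon\in\R^\Tt$ by Brouwer's theorem (or a homotopy/degree argument): an a priori estimate localizing the solution in a fixed ball comes from the energy identity above, which for the truncated $H_\varepsilon$ still gives control. A maximum-principle argument at the extremal cell then shows $c_K^n\in[0,1]$: if $c_{K_0}^n=\max_K c_K^n\ge 1$, evaluate \eqref{eq:scheme_c} at $K_0$; every flux $F_{K_0\sig}^n$ is $\le 0$ (the neighbour has smaller concentration, and one checks each flux formula is nondecreasing in $c_K$ and nonincreasing in $c_L$ in the relevant regime), forcing $c_{K_0}^n\le c_{K_0}^{n-1}\le 1$, and similarly $c_K^n\ge 0$; for $\varepsilon$ small this shows $\bc^n_\varepsilon$ lands in $[\varepsilon,1-\varepsilon]^\Tt$ where $h_\varepsilon=h$, so it solves the original scheme. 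Finally the \emph{strict} inequalities $0<c_K^n<1$ follow because $H'(c)=h(c)\to\mp\infty$ at the endpoints: if some $c_K^n\in\{0,1\}$ then $H(c_K^n)$ is finite but the flux balance at that cell, combined with $\ov c\in(0,1)$ (total mass conservation, $\sum_K m_K c_K^n=\sum_K m_K c_K^0=m_\O\ov c$), is contradicted --- more precisely, a solution with $c_{K_0}^n=0$ would make the incoming flux into $K_0$ vanish while mass must be transported, or one argues via the energy bound that $H(c_K^n)>-\infty$ rules out nothing but the flux/Bernoulli structure degenerates; the cleanest route is: the discrete solution is a critical point of a strictly convex functional on the affine mass-constrained set whose gradient blows up at the boundary of $[0,1]^\Tt$, so the minimizer is interior. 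I would present the convex-minimization viewpoint as the conceptual backbone: for the centered and Sedan fluxes the scheme is literally the Euler--Lagrange system of a convex functional (discrete energy plus a Bregman-type time penalization), which simultaneously yields existence, uniqueness, interior bounds, and energy decay; for the activity and Bessemoulin--Chatard fluxes one does not have an exact variational structure, so there I would fall back on the degree/truncation plus maximum-principle route sketched above.

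\textbf{Main obstacle.} The delicate point is the case-by-case verification of the dissipativity inequality $F_{K\sig}^n D_{K\sig}(h(\bc^n)+\bPhi^n)\ge 0$ and of the monotonicity of $\mathcal F$ in $(c_K,c_L)$ for the two non-variational fluxes \eqref{eq:activity_flux} and \eqref{eq:Marianne_flux}: one must control how the Bernoulli weights, the arithmetic mean of $\beta$, and the mean-value diffusion coefficient $dr(c_K,c_L)$ interact with the singular term $\log c$ hidden inside $h$, and in particular get a lower bound that survives as $c\to 0$ so that the strict positivity $c_K^n>0$ can be extracted. Everything else --- invertibility of the discrete Poisson operator, Brouwer's theorem on the truncated system, discrete integration by parts --- is routine.
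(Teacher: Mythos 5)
Your overall architecture (energy decay by testing with $h(c_K^n)+\Phi_K^n$ and convexity of $H$, then existence by a degree/homotopy argument whose a priori estimates are exactly the bounds) matches the paper, and your energy step is essentially Proposition~\ref{prop:E_disc}: the ``dissipativity inequality'' $F_{K\sig}^n D_{K\sig}(h(\bc^n)+\bPhi^n)\ge 0$ is precisely the statement that the face concentration $\Cc_\sig^n$ in \eqref{eq:FKsig_Csig} is nonnegative, which the paper proves (Lemma~\ref{lem:avg}) by writing $\Cc$ as a convex combination of $c_K,c_L$ via the Bernoulli identities $B(\log a-\log b)a=B(\log b-\log a)b$ and $B(x)-B(-x)=-x$.

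The genuine gap is in your mechanism for the strict bounds $0<c_K^n<1$, which is the step that makes the whole argument close (the nonlinear map is only defined on $(0,1)^\Tt$, so the degree argument needs the solution confined to a compact subset of the \emph{open} cube). Neither of your two proposed routes works. The maximum-principle route fails because the fluxes are not monotone in $(c_K,c_L)$: for the centered flux \eqref{eq:centered_flux} the derivative of $\mathcal F$ with respect to $c_L$ contains the term $+\frac{c_K+c_L}{2}h'(c_L)>0$, so $\mathcal F$ is not nonincreasing in $c_L$; moreover the drift $D_{K\sig}\bPhi$ can reverse the sign of the flux at an extremal cell, and once $h$ is truncated to a Lipschitz $h_\varepsilon$ the singularity you would need to beat the bounded potential differences is gone (there is also a sign slip: fluxes $\le 0$ out of $K_0$ would give $c_{K_0}^n\ge c_{K_0}^{n-1}$, not $\le$). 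The convex-minimization route also fails: the schemes are not Euler--Lagrange systems of a convex functional, because the mobility weights $\Cc_\sig^n$ depend on the unknown (the Jacobian of $\bc\mapsto\bigl(\sum_\sig\tau_\sig\tfrac{c_K+c_L}{2}D_{K\sig}h(\bc)\bigr)_K$ is not symmetric), so there is no strictly convex functional whose gradient blows up at $\p[0,1]^\Tt$. What the paper actually does (Lemma~\ref{lem:epsilon}) is: mass conservation gives one cell $K_0$ with $c_{K_0}^n\ge\ov c>0$; the $L^\infty$ bound on $\bPhi^n$ (an $M$-matrix argument, Proposition~\ref{prop:Linf_Poisson}) and the energy inequality bound the total dissipation $\sum_\sig\tau_\sig\Dd_\sig^n$; and the coercivity of the face dissipation (Lemma~\ref{lem:dissip}: $\Dd(c_K,c_L,\Phi_K,\Phi_L)\to+\infty$ as $c_L\to0$, uniformly for $c_K\ge\delta$ and $|\Phi_K|,|\Phi_L|\le M$, verified flux by flux) then forces every neighbour of $K_0$ to stay above some $\delta_1>0$, and one propagates this in finitely many steps across the mesh (same for the upper bound). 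Without the mass-conservation seed and this propagation, the dissipation bound alone cannot exclude the configuration $\bc^n\equiv 0$; your proposal identifies the need for ``a lower bound that survives as $c\to0$'' but supplies no mechanism to produce it.
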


Knowing a discrete solution to the scheme, $(\bc^{n}, \bPhi^{n})_{1\leq n\leq N}$, we can define an approximate solution $(c_{\Tt,\bdt}, \Phi_{\Tt,\bdt})$. It is the piecewise constant function defined almost everywhere by 
\[
c_{\Tt, \boldsymbol{\Delta t}}(t,\x) = c_K^{n}, \quad 
\Phi_{\Tt, \boldsymbol{\Delta t}}(t,\x) = \Phi_K^{n} \quad \text{if}\;(t,\x) \in (t_{n-1},t_n]\times K.
\]
This definition will be developed in Section \ref{sec:convergence} and supplemented by other reconstruction operators.

Let $\left(\Tt_m, \Ee_m,(\x_K)_{K\in\Tt_m}\right)_{m\geq 1}$ be a sequence of admissible meshes in the sense of Definition~\ref{def:mesh} 
such that $h_{\Tt_m}, \ov{\boldsymbol{\Delta t}}_m \underset{m\to\infty}\longrightarrow 0$ while the mesh regularity remains bounded, i.e., 
$\zeta_{\Tt_m} \geq \zeta^\star$ for some $\zeta^\star>0$ not depending on $m$, a natural question is the convergence of the associated sequence of approximate solution $(c_{\Tt_m,\bdt_m}, \Phi_{\Tt_m,\bdt_m})_{m\geq 1}$ towards a weak solution to the continuous problem. The convergence result is stated in Theorem \ref{thm:main2}, only for the centered scheme and the Sedan scheme.

\begin{theorem}\label{thm:main2}
For the centered scheme (inner fluxes defined by \eqref{eq:fluxint} {and} \eqref{eq:centered_flux}) and the Sedan scheme (inner fluxes defined by \eqref{eq:fluxint} 
{and} \eqref{eq:SEDAN_flux}), a sequence of approximate solutions $(c_{\Tt_m,\bdt_m}, \Phi_{\Tt_m,\bdt_m})_{m\geq 1}$ satisfies, up to a subsequence,
\be\label{eq:conv}
c_{\Tt_m, \bdt_m} \underset{m\to\infty}\longrightarrow c \quad \text{a.e. in}\; Q_T, \qquad
\Phi_{\Tt_m, \bdt_m} \underset{m\to\infty}\longrightarrow \Phi \quad \text{in}\; L^2(Q_T) 
\ee
where $(c,\Phi)$ is a weak solution in the sense of Definition~\ref{Def:weaksol}.
\end{theorem}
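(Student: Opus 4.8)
The plan is to follow the classical compactness strategy for convergence of finite volume schemes, as in \cite{EGH00,Tipi}, built on the discrete energy/dissipation estimates provided by Theorem~\ref{thm:main1}. First I would record the uniform a~priori estimates: the energy bound $E_{\Tt_m}(\bc^n,\bPhi^n)\le E_{\Tt_m}(\bc^0,\bPhi^0)\le C$ combined with a discrete integration-by-parts on the Poisson scheme \eqref{eq:scheme_Phi} yields $\|\Phi_{\Tt_m,\bdt_m}\|_{L^\infty(0,T;H^1_{\Tt_m})}\le C$ (discrete $H^1$ seminorm), and hence, summing the discrete energy balance in time, a discrete space-time $L^2$ bound on $\grad r(c)$ of the form $\sum_n\dt_n\sum_\sig\tau_\sig (D_\sig \bc^n\text{-type increments of }r)^2\le C$, exactly mirroring \eqref{eq:L2H1_r}. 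The crucial point here is that the discrete dissipation controlled by the scheme is $\sum_n\dt_n\sum_\sig\tau_\sig c_\sigma^n (D_{K\sig}(h(\bc^n)+\bPhi^n))^2$ for a suitable edge average $c_\sigma^n$, and one must extract from it a genuine bound on the discrete gradient of $r(\bc^n)$; for the Sedan flux this is transparent via Remark~\ref{rem:Sedan} and a log-mean inequality, while for the centered flux one uses the elementary inequality $\tfrac{c_K+c_L}{2}(h(c_K)-h(c_L))\ge (\sqrt{r(c_K)}-\sqrt{r(c_L)})^2$ or, more simply, $(r(c_K)-r(c_L))^2\le \tfrac{c_K+c_L}{2}(h(c_K)-h(c_L))\,(\text{bounded factor})$ since $r'=ch'$ and $0\le c\le 1$.

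Next I would establish compactness. From the discrete $L^2(0,T;H^1)$ bound on $r(c_{\Tt_m,\bdt_m})$ together with the discrete duality/time-translation estimate obtained by testing \eqref{eq:scheme_c} (a discrete analogue of an estimate on $\p_t c$ in a negative norm), I would apply a discrete Aubin--Simon compactness lemma (Kruzhkov/Kolmogorov type, e.g. the version in \cite{EGH00} or \cite{Tipi}) to get that $r(c_{\Tt_m,\bdt_m})\to \ell$ strongly in $L^2(Q_T)$ and a.e., up to a subsequence. Since $r(c)=-\log(1-c)$ is a continuous strictly increasing bijection from $[0,1)$ onto $[0,+\infty]$, its inverse is continuous, so $c_{\Tt_m,\bdt_m}=r^{-1}(r(c_{\Tt_m,\bdt_m}))\to c:=r^{-1}(\ell)$ a.e. in $Q_T$, with $0\le c\le 1$ a.e. by the discrete bounds of Theorem~\ref{thm:main1}; dominated convergence then upgrades this to strong $L^p$ convergence for all $p<\infty$. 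For the potential, the uniform discrete $H^1$ bound gives weak convergence of a reconstructed gradient and, via discrete Rellich \cite{EGH00}, strong $L^2(Q_T)$ convergence of $\Phi_{\Tt_m,\bdt_m}$ to some $\Phi$ with $\Phi-\Phi^D\in L^\infty(0,T;\cH_{\Gamma^D})$.

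Then I would pass to the limit in the two weak formulations. Passing to the limit in the discretized Poisson equation \eqref{eq:scheme_Phi} against a fixed smooth test function is standard (linear problem, consistency of the TPFA gradient on admissible meshes, using $h_{\Tt_m}\to0$ and $\zeta_{\Tt_m}\ge\zeta^\star$), yielding \eqref{eq:weak_Phi}. For the concentration equation, I would multiply \eqref{eq:scheme_c} by $\varphi(t_{n-1},\x_K)$ for $\varphi\in C^\infty_c([0,T)\times\ov\O)$, sum over $K$ and $n$, and perform discrete summation by parts. The discrete time-derivative term and the initial term converge to $\iint_{Q_T}c\,\p_t\varphi + \int_\O c^0\varphi(0,\cdot)$ by the a.e./$L^\infty$ convergence of $c_{\Tt_m,\bdt_m}$. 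The flux term must be shown to converge to $-\iint_{Q_T}(r(c)+c\grad\Phi)\cdot\grad\varphi$; here I would split each numerical flux $\mathcal F$ into a ``diffusive'' part which is consistent with $-\grad r(c)$ and an ``advective'' part consistent with $-c\grad\Phi$. The diffusive part uses the strong $L^2$ convergence of $r(c_{\Tt_m,\bdt_m})$ and the weak convergence of its discrete gradient against the consistent discrete gradient of $\varphi$; the advective part uses strong convergence of $c_{\Tt_m,\bdt_m}$ and weak convergence of the discrete gradient of $\Phi_{\Tt_m,\bdt_m}$, noting that the Bernoulli-function factors $B(\pm D_{K\sig}\bPhi^n)$ (resp. the $\nu(\bc)$ correction for Sedan) tend to $1$ in the relevant sense because $D_{K\sig}\bPhi^n\to0$ uniformly-enough on the support of $\varphi$, so they contribute only vanishing error terms controlled by the a~priori bounds and $h_{\Tt_m}\to0$.

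The main obstacle I expect is the flux consistency analysis for the two schemes simultaneously, i.e. rigorously showing that the edge quantities $\tau_\sigma\mathcal F(c_K^n,c_L^n,\Phi_K^n,\Phi_L^n)$ reconstruct, in the limit, exactly $-\grad r(c)-c\grad\Phi$ tested against $\grad\varphi$. This requires: (i) a careful Taylor expansion of the Bernoulli function and, for Sedan, of $\nu$, to isolate the leading-order terms and bound the remainders by $\sum_n\dt_n\sum_\sigma\tau_\sigma (D_\sigma\bPhi^n)^2(D_\sigma\bc^n)$-type quantities that vanish using the dissipation bound and $\|c\|_\infty\le1$; (ii) handling the nonlinear diffusive part for the centered flux, where $\tfrac{c_K+c_L}{2}(h(c_K)-h(c_L))$ must be compared to $r(c_K)-r(c_L)$ — this needs the inequality relating these two quantities together with the uniform $L^2$ bound on discrete gradients of $r(c)$ and a.e. convergence to identify the limit as $\grad r(c)$; (iii) checking that the ``upwind''-type asymmetry present in Scharfetter--Gummel-based fluxes does not survive in the limit, again a consequence of $h_{\Tt_m}\to0$ together with the energy estimate. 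Everything else — the discrete functional-analytic toolbox (discrete Poincaré, Rellich, Aubin--Simon), and the passage to the limit in the linear Poisson equation — is by now classical and can be invoked from \cite{EGH00,Tipi}.
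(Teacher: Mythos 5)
Your overall architecture (a priori estimates, compactness, identification of the limit) matches the paper's, but two of your key steps have genuine gaps. First, the passage to the limit in the flux term: you propose to Taylor-expand the Bernoulli factors and argue that $B(\pm D_{K\sig}\bPhi^n)\to 1$ because ``$D_{K\sig}\bPhi^n\to 0$ uniformly-enough''. No such uniform smallness is available: the only control on the potential jumps is the $L^2$-type bound $\sum_\sig\tau_\sig(D_\sig\bPhi^n)^2\le C$, and individual jumps can remain $O(1)$; for the Sedan flux the argument of the Bernoulli function also contains $D_{K\sig}\nu(\bc^n)$, which is likewise only controlled in a discrete $L^2$ sense. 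The paper avoids any expansion by using the exact algebraic identity $F_{K\sig}^n=-\tau_\sig\,\Cc_\sig^n\,D_{K\sig}(h(\bc^n)+\bPhi^n)$ from \eqref{eq:FKsig_Csig}, splitting the resulting sum into the $h$-part and the $\Phi$-part, and replacing the scheme's face concentration $\Cc_\sig^n$ by $\wt\Cc_\sig^n=D_\sig r(\bc^n)/D_\sig h(\bc^n)$ so that the $h$-part becomes exactly $\iint\grad_{\Tt,\bdt}r(\bc)\cdot\wh\grad\bvarphi$; the error $|T_2-\wt T_2|$ is then killed by Cauchy--Schwarz against the dissipation together with the uniform ratio bound $\wt\Cc_\sig^n/\Cc_\sig^n\le G$ of Lemma~\ref{lem:cctilde} and the $L^1$ convergence $\|c_{\Ee,\bdt}-\wt c_{\Ee,\bdt}\|_{L^1}\to 0$. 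This comparison lemma is the technical heart of the proof and the precise reason the theorem is stated only for the centered and Sedan schemes (it fails for Bessemoulin-Chatard); your proposal never isolates it, and your claim that the Sedan case is ``transparent via Remark~\ref{rem:Sedan} and a log-mean inequality'' is only true when the potential is constant across the face --- in general the Sedan face concentration is a $\Phi$-dependent convex combination and the ratio bound requires the $L^\infty$ bound on $\bPhi$ plus the careful limit analysis of Appendix~\ref{app:alacon}.

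Second, two of the estimates you invoke as ``classical toolbox'' items are not. The $L^2(Q_T)$ bound on $\pi_{\Tt,\bdt}r(\bc)$ itself does not follow from a plain discrete Poincar\'e inequality, because $r(\bc)$ has neither prescribed boundary values nor a controlled mean (only $\bc$ has a controlled mean); the paper needs the argument with $c^*=(1+\ov c)/2$, the lower bound on the measure of $\{r(\bc)\le r(c^*)\}$ coming from mass conservation, and Poincar\'e--Wirtinger. And the compactness step is not a standard discrete Aubin--Simon lemma: the spatial gradient bound is on $r(\bc)$ while the time-derivative (dual-norm) bound is on $\bc$, so the classical version does not apply and one must invoke the nonlinear degenerate variant \cite[Theorem 3.9]{ACM17}, which is exactly what the paper does via the estimate \eqref{eq:ACM17}. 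Your sketch points in the right direction on both counts but, as written, the cited tools would not close these steps.
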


The above theorem deserves some comments. First, the convergence proof carried out in what follows 
does not encompass the activity based scheme and the Bessemoulin-Chatard scheme for reasons 
that will appear clearly in the proof later on. This does of course not mean that these schemes do not converge, 
but only that our analysis does not cover them. Second, the topologies for which the convergence is claimed 
in~\eqref{eq:conv} is suboptimal when compared to the results we prove in Section~\ref{sec:convergence}.
However, we choose to keep the statement as simple as possible. The interested reader can refer to 
Section~\ref{sec:convergence} to get finer results, including the convergence of approximate gradients 
to be defined later on. 

Section~\ref{sec:numerics} is then devoted to the comparison of the numerical results produced by the different schemes.

% ==========================================================
% 					Fixed mesh
% ==========================================================
\section{Numerical analysis for fixed meshes}\label{sec:existence}

In this section, one aims to show that each scheme admits at least one solution and 
that the physical bounds are preserved by the schemes. 
Our approach is based on a topological degree argument~\cite{LS34,Dei85} to be detailed in Section~\ref{ssec:existence}. 
It relies on a priori estimates to be stated in Section~\ref{ssec:apriori}.
But let us start by some preliminary properties of the different functions ${\mathcal F}$, defined either by  \eqref{eq:centered_flux}, \eqref{eq:SEDAN_flux}, \eqref{eq:activity_flux}, or \eqref{eq:Marianne_flux}, and some consequences  for the inner numerical fluxes $F_{K\sig}^{n}$ .
% ============================================================
\subsection{Face concentration and face dissipation}\label{ssec:fluxes2}
For each scheme, one can naturally define a face concentration functional 
${\mathcal C}:(0,1)\times(0,1)\times \R \times \R\to \R$ by
\begin{equation}\label{eq:EC}
\Cc: (c_K,c_{L}, \Phi_K,\Phi_{L})\mapsto \frac{{\mathcal F} (c_K,c_L,\Phi_K,\Phi_L)}{ h(c_K)+\Phi_K-h(c_{L})-\Phi_{L}}.
\end{equation}
It clearly satisfies 
$
\Cc\left(c_K,c_{L}, \Phi_K,\Phi_{L}\right) = \Cc\left(c_{L},c_{K}, \Phi_{L},\Phi_{K}\right).
$
It  means that we can define one unique face concentration by internal face and by choice of flux
\be\label{eq:Csig}
\Cc_\sig^{n} = \Cc\left(c_K^{n},c_{L}^{n}, \Phi_K^{n},\Phi_{L}^{n}\right) \quad\forall \sigma\in\Ee_{\rm int}, \sigma=K|L
\ee
and that each flux $F_{K\sig}^{n}$ can be rewritten as 
\be\label{eq:FKsig_Csig}
F_{K\sig}^{n}= -\tau_\sig \Cc_\sig^{n} D_{K\sig}(h(\bc^{n}) + \bPhi^{n}), \quad \forall K\in\Tt, \forall \sigma=K|L. 
\ee

We also introduce a  face dissipation functional $\Dd: (0,1)\times(0,1) \times \R \times \R\to\R$, defined by 
\be\label{eq:Dj}
\Dd(c_K,c_L,\Phi_K,\Phi_{L}) = \Cc\left(c_K,c_{L}, \Phi_K,\Phi_{L}\right)
\left| h(c_K) - h(c_{L}) + \Phi_K - \Phi_{L} \right|^2.
\ee
and we set, for each scheme, 
\be\label{eq:Dsig}
\Dd_\sig^{n} = \Dd(c_K^{n},c_{L}^{n},\Phi_K^{n},\Phi_{L}^{n}), \quad \forall \sigma\in\Ee_{\rm int}, \sigma=K|L.
\ee
For $\delta\in (0,1)$ and $M\in\R$, we finally define two functions associated to ${\mathcal D}$, ${\Psi}_{\delta,M}:(0,1) \to \R$ and  ${\Upsilon}_{\delta,M}:(0,1) \to \R$, by 
\be\label{eq:defPsiUpsilon}
\begin{aligned}
{\Psi}_{\delta,M}(c_L)&=\inf \{\Dd(c_K,c_L, \Phi_K, \Phi_L);  c_K\in [\delta,1),  (\Phi_K,\Phi_L) \in [-M,M]^2\}\\
{\Upsilon}_{\delta,M}(c_L)&=\inf \{\Dd(c_K,c_L, \Phi_K, \Phi_L);  c_K\in (0, 1-\delta], (\Phi_K,\Phi_L) \in [-M,M]^2\} 
\end{aligned}
\ee
{
Note that $\delta \mapsto \Psi_{\delta, M}(c_L)$  and $\delta \mapsto \Psi_{\delta, M}(c_L)$ are nondecreasing for all $M\in \R$ and all $c_L\in (0,1)$.
}

Our first lemma in this section focuses on the face concentration, which for three scheme over four can be shown to 
be an average value of the surrounding cell concentrations. 
\begin{lemma}\label{lem:avg}
The face concentration functional defined by \eqref{eq:EC} and either \eqref{eq:centered_flux}, \eqref{eq:SEDAN_flux} or \eqref{eq:Marianne_flux} verifies, for all $(c_K,c_{L},\Phi_K,\Phi_{L}) \in(0,1)\times(0,1)\times \R \times \R$, 
\be
\min(c_K,c_{L})\leq  {\mathcal C}(c_K,c_L,\Phi_K,\Phi_L)\leq\max(c_K,c_{L}). \label{eq:avg}
\ee
Property~\eqref{eq:avg} does not hold in general in the case where ${\mathcal F}$ is defined by \eqref{eq:activity_flux} (activity based flux), but one still has, for all $(c_K,c_{L},\Phi_K,\Phi_{L}) \in(0,1)\times(0,1)\times \R \times \R$, 
\be\label{eq:avgA}
\Cc(c_K,c_{L},\Phi_K,\Phi_{L}) \geq \frac{\min(c_K,c_{L})}{2}>0.
\ee
\end{lemma}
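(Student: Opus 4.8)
The plan is to treat the four flux formulas one at a time, reducing each to an elementary inequality about the Bernoulli function $B(u)=u/(e^u-1)$. First I would record the two identities that all cases rest on: $h(c_K)-h(c_L)=\log(c_K/c_L)+\nu(c_K)-\nu(c_L)$ where $\nu(c)=-\log(1-c)$, and the basic relation $B(u)-B(-u)=-u$, equivalently $B(-u)=B(u)+u$, together with positivity $B(u)>0$ for all $u\in\R$. The denominator in \eqref{eq:EC} is $\theta:=h(c_K)+\Phi_K-h(c_L)-\Phi_L$; since $h$ is strictly increasing, $\theta$ has the same sign as $(c_K-c_L)$ when $\Phi_K=\Phi_L$, and in general I must check that $\Cc$ extends continuously across $\theta=0$ (l'Hôpital / the fact that $\mathcal F$ vanishes there), so that \eqref{eq:avg} is a statement about a genuine continuous function on $(0,1)^2\times\R^2$.

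For the \textbf{centered flux} \eqref{eq:centered_flux} the claim is immediate: $\mathcal F=-\tfrac{c_K+c_L}{2}\,\theta$, hence $\Cc\equiv\tfrac{c_K+c_L}{2}$, which lies in $[\min,\max]$. For the \textbf{Sedan flux} \eqref{eq:SEDAN_flux}, write $w=D_{K\sigma}(\bPhi+\nu(\bc))=\Phi_L-\Phi_K+\nu(c_L)-\nu(c_K)$ and note $\theta=\log(c_K/c_L)-w$. Using $B(-w)=B(w)+w$, the numerator becomes $B(w)(c_K-c_L)-w\,c_L$; I would then substitute $c_L=c_K e^{-(\theta+w-\log(c_K/c_L))}$... more cleanly, one shows by the standard Scharfetter–Gummel computation that $\mathcal F = B(w)c_K-B(-w)c_L$ can be written as $\Cc\cdot\theta$ with $\Cc$ a convex-combination weight: precisely $\Cc_\sigma = c_K\,B(w)\dfrac{e^{\log(c_K/c_L)}-1}{\,?\,}$... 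The robust route, which I would actually follow, is to prove the \emph{general lemma}: if $\mathcal F = B(v)\,c_K - B(-v)\,c_L$ for some real $v$, and the denominator is $\theta = \log(c_K/c_L) + (v - w_0)$ for the appropriate shift, then $\Cc$ is an average of $c_K,c_L$. This follows from the identity $B(v)c_K-B(-v)c_L = \Lambda(c_K,c_L)\cdot\big(\log c_K - \log c_L + v\big)$ where $\Lambda$ is the logarithmic mean times a positive $B$-factor — I would verify this factorization directly and observe $\Lambda\in[\min(c_K,c_L),\max(c_K,c_L)]$ because the logarithmic mean lies between the arguments and $B>0$ rescales numerator and the matching denominator piece consistently. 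The Sedan case is $v=w$ with the shift making the denominator exactly $\theta$; the \textbf{Bessemoulin–Chatard flux} \eqref{eq:Marianne_flux} is the same lemma applied after dividing through by $dr(c_K,c_L)$, using crucially that $dr(c_K,c_L)=\big(h(c_K)-h(c_L)\big)/\big(\log c_K-\log c_L\big)$, so that $dr\cdot\big(\log c_K-\log c_L + D_{K\sigma}\bPhi/dr\big)=h(c_K)-h(c_L)+\Phi_K-\Phi_L = \theta$ exactly; hence $\Cc$ equals the same averaging functional $\Lambda$ and \eqref{eq:avg} holds.

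For the \textbf{activity flux} \eqref{eq:activity_flux} the averaging property genuinely fails because the prefactor $\tfrac{\beta(c_K)+\beta(c_L)}{2}=1-\tfrac{c_K+c_L}{2}$ is an independent arithmetic mean, not tied to the $B$-factorization. Here I would instead bound below: the bracket $B(u)a(c_K)-B(-u)a(c_L)$ with $u=D_{K\sigma}\bPhi$ factors, by the same lemma applied to the activities, as $\Lambda_a(a(c_K),a(c_L))\cdot\big(\log a(c_K)-\log a(c_L)+u\big)$, and since $\log a(c)=h(c)$ the second factor is exactly $\theta$. Therefore $\Cc = \tfrac{\beta(c_K)+\beta(c_L)}{2}\cdot\dfrac{\Lambda_a(a(c_K),a(c_L))}{?}$ — one must divide out correctly: writing $a(c)=c/(1-c)=c/\beta(c)$, the logarithmic mean $\Lambda_a$ of $a(c_K),a(c_L)$ combined with the factor $\tfrac{\beta(c_K)+\beta(c_L)}{2}$ needs a lower estimate. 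I would bound $\beta(c_K)+\beta(c_L)\ge \max(\beta(c_K),\beta(c_L)) \ge \beta(\max(c_K,c_L))$, bound the logarithmic mean of the activities below by $\min(a(c_K),a(c_L))$, and reassemble, keeping track that $\min(a(c_K),a(c_L)) = a(\min(c_K,c_L)) = \min(c_K,c_L)/\beta(\min(c_K,c_L)) \ge \min(c_K,c_L)$; the arithmetic-mean factor contributes at least $\tfrac12\beta(\cdot)\ge$ a constant times $(1-\max(c_K,c_L))$, and the product telescopes to at least $\tfrac12\min(c_K,c_L)$. The main obstacle is precisely this last bookkeeping: getting the $\tfrac12$ constant clean requires carefully pairing the right $\beta$-terms with the right endpoints of the logarithmic mean so that the dangerous factors $1-c_K$, $1-c_L$ cancel rather than accumulate; everything else is a direct consequence of the factorization lemma for the Scharfetter–Gummel bracket, which I would prove once and reuse.
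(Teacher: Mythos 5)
Your plan handles the centered flux correctly and identifies the right structural ingredients (the identity $B(u)-B(-u)=-u$ and a Scharfetter--Gummel factorization), but the justification of the central step for the Sedan and Bessemoulin--Chatard fluxes is a non sequitur. Writing $B(v)c_K-B(-v)c_L=B(v)\bigl(c_K-e^v c_L\bigr)$, the logarithmic mean appearing in your factorization is that of $c_K$ and $e^v c_L$, so it lies between \emph{those} two numbers, not between $c_K$ and $c_L$ (for large $v$ it blows up); the claim that the prefactor $B(v)$ ``rescales the numerator and the matching denominator piece consistently'' so that the product lands in $[\min(c_K,c_L),\max(c_K,c_L)]$ is exactly the content of the lemma and is left unproved. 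The paper closes this with a different decomposition: since $B(x)c_K-B(-x)c_L=0$ for $x=\log(c_K/c_L)$, subtracting this zero from the numerator and dividing by $\theta=x-y$ gives
\[
\Cc=\frac{B(y)-B(x)}{x-y}\,c_K+\frac{B(-x)-B(-y)}{x-y}\,c_L,
\]
whose coefficients sum to $1$ (by $B(s)-B(-s)=-s$) and are nonnegative (by monotonicity of $B$), so $\Cc$ is a convex combination of $c_K$ and $c_L$; this applies verbatim with $y=\Phi_L-\Phi_K+\nu(c_L)-\nu(c_K)$ for Sedan and $y=(\Phi_L-\Phi_K)/dr_\sigma$ for Bessemoulin--Chatard. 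Your factorization route can in principle be completed (e.g.\ via monotonicity of exponential means in each argument), but that argument is of comparable substance and is missing; the sign slips in your displayed denominators ($\log(c_K/c_L)+v$ where $-v$ is needed) suggest the factorization was not actually checked.

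For the activity flux the skeleton is right --- arithmetic mean of the $\beta$'s times a convex combination of the $a$'s, the latter bounded below by $a(\min(c_K,c_L))$ since $a$ is increasing --- but the bookkeeping you propose does not yield \eqref{eq:avgA}. Bounding the arithmetic mean below by $\tfrac12\beta(\max(c_K,c_L))$ and pairing with $a(\min(c_K,c_L))=\min(c_K,c_L)/\beta(\min(c_K,c_L))$ gives $\tfrac12\min(c_K,c_L)\,\beta(\max)/\beta(\min)$, which is $\le\tfrac12\min(c_K,c_L)$ because $\beta$ is decreasing --- a weaker bound than the one claimed. The correct pairing (the paper's) is $\tfrac12\bigl(\beta(c_K)+\beta(c_L)\bigr)\ge\tfrac12\max\bigl(\beta(c_K),\beta(c_L)\bigr)=\tfrac12\beta(\min(c_K,c_L))$, again because $\beta$ is decreasing, and then $\beta(\min)\,a(\min)=\min(c_K,c_L)$ exactly, with no loss. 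You flag this pairing as ``the main obstacle,'' which is fair, but the bounds you actually wrote down go the wrong way and would not close it.
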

\begin{proof}
We first remark that Property~\eqref{eq:avg} trivially holds for the centered flux~\eqref{eq:centered_flux} since, in this case, 
\[\Cc(c_K,c_{L},\Phi_K,\Phi_{L}) = \frac{c_{K}+c_L}2.\] 

The proof is more intricate for the Sedan flux \eqref{eq:SEDAN_flux}  and the Bessemoulin-Chatard flux \eqref{eq:Marianne_flux}. It relies
on elementary properties of the Bernoulli function.

Let us start with the Bessemoulin-Chatard flux~\eqref{eq:Marianne_flux}, for which 
\begin{equation}\label{eq:bchD}
\Cc(c_K,c_{L},\Phi_K,\Phi_{L})=\frac{dr_\sig\left(B\left(\ds\frac{\Phi_{L}-\Phi_K}{dr_\sig}\right)c_K-
B\left(\ds\frac{\Phi_K-\Phi_{L}}{dr_\sig}\right)c_{L}\right)}{h(c_K)+\Phi_K-h(c_{L})-\Phi_{L}},
\end{equation}
where we have set 
\be\label{eq:dr_sig}
dr_\sig = dr(c_K,c_L).
\ee
Let us now recall the elementary property of the Bernoulli function:
\be\label{eq:logBernoulli}
B(\log(a) - \log(b)) a - B(\log(b) - \log(a))b =0, \qquad \forall (a,b) \in (0,1)^2.
\ee
Introducing the quantities $x = \log(c_K/c_{L})$ and 
$y=(\Phi_{L}-\Phi_K)/{dr_\sig}$, elementary calculations show that 
the relation~\eqref{eq:bchD} rewrites 
\be\label{eq:B_avg}
\Cc(c_K,c_{L},\Phi_K,\Phi_{L})=\frac{B(y)-B(x)}{x-y}c_K+\frac{B(-x)-B(-y)}{x-y}c_{L}.
\ee
But, the Bernoulli function is decreasing and satisfies 
$B(x) - B(-x) = -x$ for all $x\in\R$, which implies
\[\frac{B(y)-B(x)}{x-y} + \frac{B(-x)-B(-y)}{x-y} =1.\] 
Thus $\Cc(c_K,c_{L},\Phi_K,\Phi_{L})$ is a convex combination of $c_K$ and $c_{L}$, so that~\eqref{eq:avg}
holds for the Bessemoulin-Chatard flux.

The case of the Sedan flux~\eqref{eq:SEDAN_flux} can be treated similarly because \eqref{eq:B_avg} is still satisfied, 
but with $x=\log(c_K/c_L)$ and $y= \Phi_{L} + \nu(c_{L}) - \Phi_{K} + \nu(c_{K})$. Here again, $\Cc(c_K,c_{L},\Phi_K,\Phi_{L})$ is a convex 
combination of $c_K$ and $c_{L}$, so that~\eqref{eq:avg}
holds for the Sedan flux.

The fact that \eqref{eq:avg} does not hold for the activity based flux~\eqref{eq:activity_flux} is depicted on Figure~\ref{fig:EC}. 
Nevertheless, one can express the corresponding face concentration under the form 
\[
 \Cc(c_K,c_{L},\Phi_K,\Phi_{L})= \frac{\beta(c_K)+\beta(c_L)}2 
\times  \left(\frac{B(y)-B(x)}{x-y}a(c_K)+\frac{B(-x)-B(-y)}{x-y}a(c_{L})
  \right),
\]
with $x=\log(a(c_K)) - \log(a(c_{L}))$ and $y=\Phi_{L} - \Phi_K$.
Therefore,  $ \Cc(c_K,c_{L},\Phi_K,\Phi_{L})$ is the product of the arithmetic mean of the positive 
quantities $\beta(c_K)$ and $\beta(c_{L})$ with a convex combination of the positive quantities $a(c_K)$ and $a(c_{L})$. As $a$ is increasing, this convex combination is bounded by below by $a(\min(c_K,c_{L}))$. Using the identity $\beta(c)a(c)=c$, we get \eqref{eq:avgA}. 
\end{proof}

\begin{figure}[htb]
\begin{tikzpicture}
\begin{axis}[
 xlabel=$\Phi_{L}-\Phi_K$,
 ylabel=Face concentration $\Cc$,
 legend pos=outer north east,
 legend cell align={left}]
\addplot +[color=centeredcolor, dashdotted, mark=none, line width=1.5] table [y=trois, x=zero]{C_edge_1.dat};
\addlegendentry{$\Cc$ centered}
\addplot +[color=activitycolor, densely dotted, mark=none, line width=1.5] table [y=cinq, x=zero]{C_edge_1.dat};
\addlegendentry{$\Cc$ activity based}
\addplot +[color=BCHcolor, densely dashed, mark=none, line width=1.5] table [y=six, x=zero]{C_edge_1.dat};
\addlegendentry{$\Cc$ Bessemoulin-Chatard}
\addplot +[color=sedancolor, loosely dashed, mark=none, line width=1.5] table [y=quatre, x=zero]{C_edge_1.dat};
\addlegendentry{$\Cc$ Sedan}
\addplot [mark=none, style = solid, color = black, line width=1] table [y=deux, x=zero]{C_edge_1.dat};
\addplot [mark=none, style = solid, color = black, line width=1] table [y=un, x=zero]{C_edge_1.dat};
\addlegendentry{$c_{K}$, $c_L$}
\end{axis}
\end{tikzpicture}

\caption{
Evolution of the face concentration $\Cc(c_K,c_{L},\Phi_K,\Phi_{L})$ as a function of the jump 
of the potential $\Phi_{L}-\Phi_K$ for the choice $c_K = 0.3$ and $c_{L}=0.7$.}
\label{fig:EC}
\end{figure}
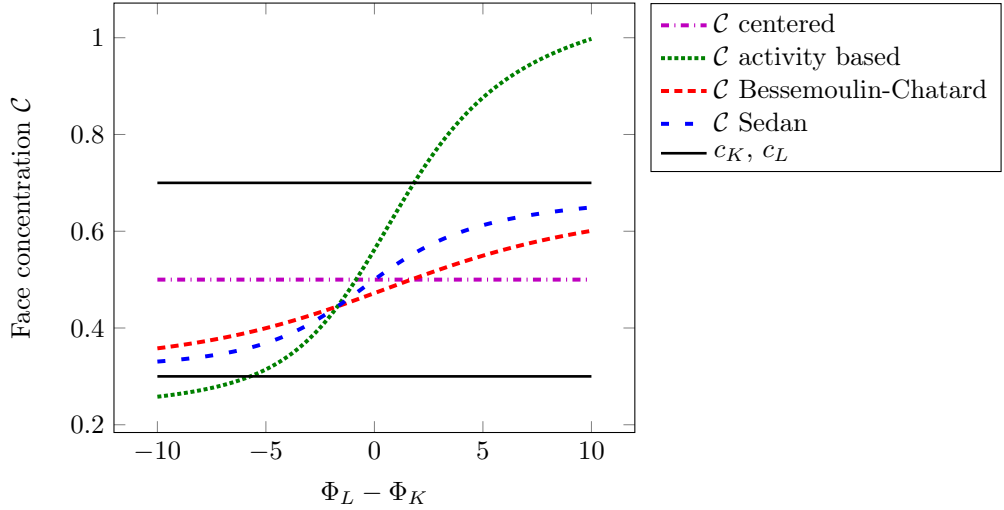

As a by-product of Lemma \ref{lem:avg}, we obtain that the face dissipation ${\mathcal D}$ is a nonnegative function as the product of nonnegative quantities. The second lemma of this section is about the coercivity of the face dissipation functional. As its proof is technical, it is given in Appendix~\ref{app:dissip}.
\begin{lemma}\label{lem:dissip}
The face dissipation functional defined by \eqref{eq:Dj} and either \eqref{eq:centered_flux}, \eqref{eq:SEDAN_flux},   \eqref{eq:activity_flux} or \eqref{eq:Marianne_flux}  satisfies the following dissipation property: given $\delta \in (0,1)$ and $M \in \R$, there holds, for $\Psi$ and $\Upsilon$ as defined in \eqref{eq:defPsiUpsilon}: 
\[
\begin{gathered}
\ds\lim_{c_L\to 0} {\Psi}_{\delta,M}(c_L)= + \infty,\\
\ds\lim_{c_L\to 1} {\Upsilon}_{\delta,M}(c_L)= + \infty.
\end{gathered} 
\]
\end{lemma}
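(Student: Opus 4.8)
The plan is to establish the two limits separately; by symmetry it suffices to explain the argument for $\lim_{c_L\to 0}\Psi_{\delta,M}(c_L) = +\infty$, the other being obtained by the involution $c\mapsto 1-c$ together with the symmetry $\Cc(c_K,c_L,\Phi_K,\Phi_L) = \Cc(c_L,c_K,\Phi_L,\Phi_K)$. Fix $\delta\in(0,1)$ and $M\in\R$, and consider $c_K\in[\delta,1)$ and $(\Phi_K,\Phi_L)\in[-M,M]^2$. The key observation is that as $c_L\to 0$, the ``chemical potential gap'' $h(c_K)+\Phi_K-h(c_L)-\Phi_L$ blows up like $-\log(c_L)\to+\infty$ (since $h(c_L)=\log\frac{c_L}{1-c_L}\to-\infty$ while $h(c_K)$ stays bounded because $c_K\ge\delta$ and $|\Phi_K-\Phi_L|\le 2M$), so I expect $\Dd = \Cc\cdot|{\cdots}|^2$ to diverge provided the face concentration $\Cc$ does not decay too fast. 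The whole point is to quantify this competition.

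First I would write $\Dd(c_K,c_L,\Phi_K,\Phi_L) = \Cc(c_K,c_L,\Phi_K,\Phi_L)\cdot G^2$ where $G := h(c_K)-h(c_L)+\Phi_K-\Phi_L$, and obtain the lower bound $|G|\ge |h(c_L)| - |h(c_K)| - 2M \ge -\log c_L - C_{\delta,M}$ for $c_L$ small, where $C_{\delta,M}$ depends only on $\delta$ and $M$; in particular $G^2\ge \tfrac14(\log c_L)^2$ once $c_L$ is small enough. Next I would bound $\Cc$ from below. For the centered, Sedan, and Bessemoulin-Chatard fluxes, Lemma~\ref{lem:avg} gives $\Cc\ge\min(c_K,c_L)$; but this could be as small as $c_L$, which would only give $\Dd\gtrsim c_L(\log c_L)^2\to 0$, so the crude bound is \emph{not} enough and one must look more carefully at the structure of $\Cc$ when $c_L$ is the smaller value. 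The right move is to use formula~\eqref{eq:B_avg}: writing $\Cc = \frac{B(y)-B(x)}{x-y}c_K + \frac{B(-x)-B(-y)}{x-y}c_L$ with $x=\log(c_K/c_L)\to+\infty$ and $y$ bounded (for Sedan, $y=\Phi_L+\nu(c_L)-\Phi_K+\nu(c_K)$, and $\nu(c_L)=-\log(1-c_L)\to 0$, so $y$ stays bounded; for Bessemoulin-Chatard, $y=(\Phi_L-\Phi_K)/dr_\sig$ with $dr_\sig$ bounded below since $dr_\sig\ge r'(\max(c_K,c_L))^{-1}$-type bounds hold and $r'\ge 1$, so again $y$ is bounded), one checks that the coefficient of $c_K$, namely $\frac{B(y)-B(x)}{x-y}$, behaves like $\frac{B(y)}{x}$ for large $x$ since $B(x)\to 0$; hence $\Cc \gtrsim \frac{B(y)c_K}{x} \gtrsim \frac{c_K}{\log(1/c_L)}$, uniformly in $\Phi_K,\Phi_L\in[-M,M]$ and $c_K\ge\delta$, because $B(y)$ is bounded below on any bounded $y$-range. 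Combining, $\Dd \gtrsim \frac{\delta}{\log(1/c_L)}\cdot\tfrac14(\log c_L)^2 = \frac{\delta}{4}|\log c_L| \to +\infty$. For the activity based flux one argues analogously: by~\eqref{eq:avgA}, $\Cc\ge\tfrac12\min(c_K,c_L)$, which again is too weak, so instead I would use the displayed factorization $\Cc = \frac{\beta(c_K)+\beta(c_L)}{2}\big(\frac{B(y)-B(x)}{x-y}a(c_K)+\frac{B(-x)-B(-y)}{x-y}a(c_L)\big)$ with $x=\log(a(c_K)/a(c_L))\to+\infty$ and $y=\Phi_L-\Phi_K$ bounded; the first convex-combination coefficient times $a(c_K)$ gives $\gtrsim \frac{a(c_K)}{x}\gtrsim\frac{a(\delta)}{\log(1/c_L)}$ and $\beta(c_K)+\beta(c_L)\ge\beta(c_K)=1-c_K$, but since $c_K$ may approach $1$ this lower bound degenerates—so here one instead keeps $\beta(c_L)=1-c_L\ge\tfrac12$ for $c_L$ small, getting $\Cc\gtrsim\frac{1}{\log(1/c_L)}\cdot\frac{a(\delta)}{2}$... wait, that uses $a(c_K)\ge a(\delta)$ only when the $c_K$-term dominates; more robustly, the convex combination is $\ge a(\min(c_K,c_L)) = a(c_L)$ for $c_L$ small... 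I would reconcile this by noting the convex combination is $\ge\frac{B(-x)-B(-y)}{x-y}a(c_L)$ and also $\ge\frac{B(y)-B(x)}{x-y}a(c_K)\gtrsim\frac{a(\delta)}{\log(1/c_L)}$, and take whichever is needed; with $\beta(c_L)\ge 1/2$ this yields $\Cc\gtrsim\frac{a(\delta)}{\log(1/c_L)}$ and hence $\Dd\gtrsim|\log c_L|\to+\infty$.

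To make this uniform (recall $\Psi_{\delta,M}$ is an infimum over $c_K\in[\delta,1)$ and $(\Phi_K,\Phi_L)\in[-M,M]^2$), I would note that all the estimates above are uniform in those parameters: the bound $|h(c_K)|\le|h(\delta)|\vee|h(1^-)|$ is false as stated since $h(c_K)\to+\infty$ as $c_K\to 1$, but this only \emph{helps} the lower bound on $|G|$ when $h(c_K)>0$; the only case needing care is a possible cancellation in $G=h(c_K)-h(c_L)+\Phi_K-\Phi_L$, which cannot occur for $c_L$ small since $-h(c_L)\to+\infty$ dominates. Similarly $B(y)$ bounded below requires $y$ bounded \emph{above}, which holds uniformly because the relevant $y$ is an affine bounded function of $\Phi_K-\Phi_L$ plus terms ($\nu(c_K),\nu(c_L)$, or nothing) that stay bounded for the ranges considered—$\nu(c_K)=-\log(1-c_K)$ is the one unbounded term (as $c_K\to1$), but it enters $y$ with a sign that pushes $x-y$ to grow, which again only helps. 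I would organize the write-up by treating the four fluxes in turn, in each case reducing via the appropriate formula (\eqref{eq:B_avg} or its activity-variant) to the elementary asymptotics of the Bernoulli function $B(x)\sim e^{-x}|x|$ as $x\to+\infty$ and $B$ bounded on compacts, and conclude. The main obstacle is precisely this bookkeeping of which quantities stay bounded and which blow up (and with which sign) uniformly over the parameter box—the naive bound $\Cc\ge\min(c_K,c_L)$ from Lemma~\ref{lem:avg} is genuinely insufficient, so one is forced into the finer $1/\log(1/c_L)$ lower bound on $\Cc$, and getting that cleanly and uniformly is the technical heart of the argument. This is exactly why the authors relegated it to Appendix~\ref{app:dissip}.
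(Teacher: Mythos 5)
Your treatment of the limit $c_L\to 0$ is sound and is, in substance, the paper's own argument in different clothing: the paper writes $\Dd=\Ff\cdot\bigl(h(c_K)+\Phi_K-h(c_L)-\Phi_L\bigr)$ and shows that the flux $\Ff$ stays bounded away from $0$ while the potential gap blows up, whereas you write $\Dd=\Cc\cdot G^2$ and prove $\Cc\gtrsim 1/|G|$; since $\Ff=\Cc\cdot G$, these are literally the same estimate, and both are extracted from the representation \eqref{eq:B_avg} together with the asymptotics of the Bernoulli function. Two small repairs are needed even there: the centered flux has no representation of the form \eqref{eq:B_avg}, so your refined argument does not apply to it --- but for that flux the trivial bound $\Cc=(c_K+c_L)/2\ge c_K/2\ge\delta/2$ already gives the $\Psi$-limit, which you should state rather than fold it into the Bernoulli analysis; and for Bessemoulin-Chatard the fact you actually need is $dr(c_K,c_L)\ge 1$ (Cauchy mean value theorem applied to $h$ and $\log$, using $r'(c)=\tfrac1{1-c}\ge 1$), not the garbled ``$r'(\max)^{-1}$'' bound you invoke.

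The genuine gap is your very first sentence: the reduction of $\lim_{c_L\to1}\Upsilon_{\delta,M}(c_L)=+\infty$ to the $c_L\to 0$ case via the involution $c\mapsto 1-c$ (even combined with the $K\leftrightarrow L$ swap and $\Phi\mapsto-\Phi$) is false. None of the four dissipation functionals is invariant under this transformation: for the Sedan flux $\nu(1-c)=-\log c$, which is not $\pm\nu(c)$; for the activity flux $a(1-c)=1/a(c)$ and $\beta(1-c)=c$; even for the centered flux the face concentration goes to $1-\Cc$. This mirrors the fact that the mobility in the model is $c$ and not $c(1-c)$, so the continuous problem has no such symmetry either. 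As written, your argument therefore proves only half of the lemma. The $c_L\to1$ limit does succumb to the same \emph{method}, but with different bookkeeping: for Sedan, when $c_L\to1$ and $c_K$ is simultaneously small, it is $y$ (through $\nu(c_L)\to+\infty$) and $x$ (through $\log c_K\to-\infty$) that blow up, with opposite signs, and the useful lower bound on $\Cc$ then comes from the coefficient of $c_L$ in \eqref{eq:B_avg} rather than that of $c_K$. This is precisely why the paper runs two separate steps and, inside each, splits the range of $c_K$ at a threshold $\alpha$, transferring the small-concentration flux estimate from one step to the other by the antisymmetry $\Ff(c_K,c_L,\Phi_K,\Phi_L)=-\Ff(c_L,c_K,\Phi_L,\Phi_K)$ --- which is a legitimate symmetry, unlike $c\mapsto1-c$. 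You would need to write out this second case explicitly for all four fluxes to complete the proof.
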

% ============================================================
\subsection{Uniform a priori estimates}\label{ssec:apriori}

In all this section, we assume that $(\bc^{n},\bPhi^{n})_{1\leq n\leq N}$ is a solution to the scheme 
\eqref{eq:mirror}--\eqref{eq:fluxint} with a  numerical flux defined among~\eqref{eq:centered_flux}--\eqref{eq:Marianne_flux}. We also assume that this solution verifies: $ 0 < c_K^n< 1$ for all $K\in\Tt$ and all $1\leq n\leq N$. Then the goal 
of this section is to derive enough a priori estimates on $(\bc^{n},\bPhi^{n})_{1\leq n\leq N}$
in order to show the existence of a weak solution to the nonlinear system induced by the 
scheme.

The first lemma is the discrete counterpart of the global conservation of mass. 
\begin{lemma}\label{lem:mass}
One has 
\[
\sum_{K\in\Tt} m_K c_K^{n} = \sum_{K\in\Tt} m_K c_K^{n-1} =  \int_\O c^0\d\x, \quad \forall 1\leq n\leq N.
\]
\end{lemma}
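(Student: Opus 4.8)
The plan is to sum the discrete conservation law \eqref{eq:scheme_c} over all control volumes $K \in \Tt$ and exploit the conservativity of the numerical fluxes. First I would write, for fixed $n$,
\[
\sum_{K\in\Tt} m_K \frac{c_K^n - c_K^{n-1}}{\dt_n} + \sum_{K\in\Tt}\sum_{\sig \in \Ee_K} F_{K\sig}^n = 0,
\]
and then observe that the double sum over cells and their faces can be reorganized as a sum over faces. Each interior face $\sig = K|L \in \Ee_{\rm int}$ is counted exactly twice, once from $K$ contributing $F_{K\sig}^n$ and once from $L$ contributing $F_{L\sig}^n$, and by the local conservativity property \eqref{eq:cons_F} these cancel: $F_{K\sig}^n + F_{L\sig}^n = 0$. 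Each exterior face $\sig \in \Ee_{\rm ext}$ is counted once, and by the no-flux prescription \eqref{eq:noflux_bc} the corresponding flux vanishes: $F_{K\sig}^n = 0$. Hence the entire flux sum is zero, leaving $\sum_{K\in\Tt} m_K (c_K^n - c_K^{n-1}) = 0$, i.e. $\sum_{K\in\Tt} m_K c_K^n = \sum_{K\in\Tt} m_K c_K^{n-1}$.

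Then I would propagate this equality backward in $n$ by a trivial induction down to $n = 0$, which gives $\sum_{K\in\Tt} m_K c_K^n = \sum_{K\in\Tt} m_K c_K^0$ for all $1 \le n \le N$. Finally, using the definition \eqref{eq:cK0} of $c_K^0$, one has $m_K c_K^0 = \int_K c^0(\x)\,\d\x$, so summing over $K$ and using that the cells tile $\O$ up to a set of measure zero (Definition~\ref{def:mesh}(i)) yields $\sum_{K\in\Tt} m_K c_K^0 = \int_\O c^0\,\d\x$. This completes the proof.

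There is essentially no obstacle here: the only point requiring a little care is the bookkeeping in the reorganization of $\sum_K \sum_{\sig \in \Ee_K} F_{K\sig}^n$ into a sum over $\Ee_{\rm int} \cup \Ee_{\rm ext}$, and making sure that the hypotheses invoked — conservativity \eqref{eq:cons_F} for interior faces and the no-flux condition \eqref{eq:noflux_bc} for exterior faces — cover all faces. This is a standard telescoping argument for finite volume schemes, and it does not use the positivity assumption $0 < c_K^n < 1$ nor any property specific to the four flux choices beyond \eqref{eq:cons_F} and \eqref{eq:noflux_bc}.
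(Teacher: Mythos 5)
Your proposal is correct and follows exactly the paper's argument: summing \eqref{eq:scheme_c} over $K\in\Tt$, cancelling the interior fluxes by conservativity \eqref{eq:cons_F} and the exterior ones by \eqref{eq:noflux_bc}, then concluding by induction and the definition \eqref{eq:cK0} of $c_K^0$. Nothing is missing.
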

\begin{proof}
The first equality is obtained by summing~\eqref{eq:scheme_c} over $K\in\Tt$ and by using the no-flux boundary conditions \eqref{eq:noflux_bc} and the 
local conservativity of the scheme~\eqref{eq:cons_F}. 
A straightforward induction ensures the second equality thanks to~\eqref{eq:cK0} .
\end{proof}

The second a priori estimate is related to energy dissipation and can be seen as 
a discrete counterpart of Proposition~\ref{prop:E}. 

\begin{prop}\label{prop:E_disc}
There holds
\[
\frac{E_\Tt(\bc^{n},\bPhi^{n}) - E_\Tt(\bc^{n-1},\bPhi^{n-1})}{\dt_n} \leq
- \sum_{\sig \in \Ee_{\rm int}} \tau_\sig {\mathcal D}_{\sig}^n \leq 0, \quad \forall 1\leq n\leq N,
\]
with ${\mathcal D}_{\sig}^n$ defined by \eqref{eq:Dsig}, and $E_\Tt$ defined by \eqref{eq:E_T}.
\end{prop}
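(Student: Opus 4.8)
The plan is to mimic at the discrete level the computation carried out in the proof of Proposition~\ref{prop:E}, replacing integration by parts by discrete summation by parts (i.e.\ reorganizing sums over cells into sums over faces), and convexity of $H$ in place of the chain rule.

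\medskip

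\textbf{Step 1: splitting the energy difference.} I would write $E_\Tt(\bc^n,\bPhi^n) - E_\Tt(\bc^{n-1},\bPhi^{n-1})$ as the sum of a ``chemical'' part $\sum_{K}m_K\bigl(H(c_K^n)-H(c_K^{n-1})\bigr)$ and an ``electrostatic'' part coming from the last two terms of~\eqref{eq:E_T}. Since $H$ is convex with $H'=h$, one has $H(c_K^n)-H(c_K^{n-1}) \le h(c_K^n)\,(c_K^n - c_K^{n-1})$, so the chemical part is bounded above by $\sum_K m_K\, h(c_K^n)(c_K^n-c_K^{n-1})$.

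\medskip

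\textbf{Step 2: handling the electrostatic part.} The quadratic-plus-linear terms in $\bPhi$ form a convex functional of $\bPhi^n$; I would use the elementary inequality for the quadratic form, or more directly expand $\frac12\tau_\sig\bigl((D_\sig\bPhi^n)^2-(D_\sig\bPhi^{n-1})^2\bigr) \le \tau_\sig D_{K\sig}\bPhi^n\,D_{K\sig}(\bPhi^n-\bPhi^{n-1})$ (same convexity trick, since $t\mapsto t^2/2$ is convex), treating the Dirichlet boundary terms consistently so that everything combines into $-\sum_K\sum_{\sig\in\Ee_K}\tau_\sig D_{K\sig}\bPhi^n \, D_{K\sig}(\bPhi^n-\bPhi^{n-1})$ up to the boundary contributions. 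Invoking the discrete Poisson equation~\eqref{eq:scheme_Phi} (which says exactly that $-\sum_{\sig\in\Ee_K}\tau_\sig D_{K\sig}\bPhi^n = m_K(c_K^n+c_K^{dp})$), and using that $c^{dp}$ is time-independent so the doping terms telescope to zero when paired with $c_K^n-c_K^{n-1}$, the electrostatic part is bounded above by $\sum_K m_K\,\Phi_K^n\,(c_K^n-c_K^{n-1})$.

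\medskip

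\textbf{Step 3: using the scheme and summing by parts.} Adding the two bounds gives
\[
E_\Tt(\bc^n,\bPhi^n) - E_\Tt(\bc^{n-1},\bPhi^{n-1}) \le \sum_{K\in\Tt} m_K\bigl(h(c_K^n)+\Phi_K^n\bigr)(c_K^n - c_K^{n-1}).
\]
Now I substitute $m_K(c_K^n-c_K^{n-1}) = -\dt_n\sum_{\sig\in\Ee_K}F_{K\sig}^n$ from~\eqref{eq:scheme_c}, reorganize the double sum over $(K,\sig)$ into a sum over interior faces using the conservativity~\eqref{eq:cons_F} (boundary faces drop out by~\eqref{eq:noflux_bc}), obtaining $-\dt_n\sum_{\sig=K|L\in\Ee_{\rm int}} F_{K\sig}^n\,D_{K\sig}(h(\bc^n)+\bPhi^n)$. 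Finally I insert the representation~\eqref{eq:FKsig_Csig}, $F_{K\sig}^n = -\tau_\sig\,\Cc_\sig^n\,D_{K\sig}(h(\bc^n)+\bPhi^n)$, so each face term becomes $\tau_\sig\,\Cc_\sig^n\,\bigl(D_{K\sig}(h(\bc^n)+\bPhi^n)\bigr)^2 = \tau_\sig\,\Dd_\sig^n \ge 0$ by definition~\eqref{eq:Dj} and the nonnegativity of $\Cc_\sig^n$ established via Lemma~\ref{lem:avg}. Dividing by $\dt_n$ yields the claim.

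\medskip

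\textbf{Main obstacle.} The only genuinely delicate point is bookkeeping the Dirichlet boundary terms in Step~2: one must check that the linear boundary term $-\sum_K\sum_{\sig\in\Ee^D\cap\Ee_K}\tau_\sig\Phi_\sig^D D_{K\sig}\bPhi^n$ in $E_\Tt$ combines correctly with the mirror-value convention~\eqref{eq:mirror} so that the convexity estimate closes up exactly into $\sum_K m_K\Phi_K^n(c_K^n-c_K^{n-1})$ with no leftover term. Everything else is routine discrete summation by parts together with the convexity of $H$ and of $t\mapsto t^2$.
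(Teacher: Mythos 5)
Your proposal is correct and follows essentially the same route as the paper's proof: convexity of $H$ and of $t\mapsto t^2/2$ to bound the energy increment, discrete summation by parts combined with the discrete Poisson equation~\eqref{eq:scheme_Phi} (where the time-independence of $c^{dp}$ makes the doping terms cancel) to reduce to $\sum_K m_K(h(c_K^n)+\Phi_K^n)(c_K^n-c_K^{n-1})$, and then the mass-balance equation~\eqref{eq:scheme_c} together with the representation~\eqref{eq:FKsig_Csig} and the nonnegativity of $\Cc_\sig^n$ from Lemma~\ref{lem:avg}. The boundary bookkeeping you flag as the delicate point is handled exactly as you describe, using that the Dirichlet mirror values $\Phi_\sig^D$ are time-independent.
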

\begin{proof}
Due to the convexity of $H$ and of $x\mapsto x^2/2$, we have :
\begin{multline*}
E_\Tt(\bc^{n},\bPhi^{n}) - E_\Tt(\bc^{n-1},\bPhi^{n-1})\leq  \sum_{K\in\Tt} m_K (c_K^n - c_K^{n-1})h(c_K^n)+\\
\sum_{\sigma\in\Ee}\tau_\sig D_\sig \bPhi^nD_\sig (\bPhi^n-\bPhi^{n-1})-\sum_{K\in\Tt}\sum_{\sig\in\Ee^D\cap\Ee_K}\tau_\sig \Phi_\sig^D D_{K\sig}( \bPhi^n- \bPhi^{n-1}).
\end{multline*}
A discrete integration by parts permits to rewrite the sum of the two last terms, which, combined to the scheme \eqref{eq:scheme_Phi}, leads to 
\be\label{E:step1}
E_\Tt(\bc^{n},\bPhi^{n}) - E_\Tt(\bc^{n-1},\bPhi^{n-1})\leq  \sum_{K\in\Tt} m_K (c_K^n - c_K^{n-1})(h(c_K^n)+\Phi_K^n).
\ee
Multiplying the equation~\eqref{eq:scheme_c} by $h(c_K^n) + \Phi_K^n$ and summing over $K\in\Tt$, we obtain that 
\begin{multline}\label{E:step2}
\sum_{K\in\Tt} m_K \frac{c_K^n - c_K^{n-1}}{\Delta t_n}(h(c_K^n)+\Phi_K^n)=-\sum_{K\in\Tt}(h(c_K^n)+\Phi_K^n)\sum_{\sigma \in \Ee_K} F_{K\sigma}^n\\
=-\sum_{\sig \in \Ee_{\rm int}} \tau_\sig \Cc_\sig^n \left|D_\sig \left(h(\bc^n) + \bPhi^n\right)\right|^2
\end{multline}
Combining \eqref{E:step1} and \eqref{E:step2} provides the desired estimate.
\end{proof}

The third statement of this section is devoted to a uniform $L^\infty$ estimate of $(\bPhi^n)_{1\leq n\leq N}$.
It is a straightforward consequence of the slightly more general Proposition~\ref{prop:Linf_Poisson}
stated in appendix, together with the a priori bounds $0 < c_K^n < 1$ and $-\|c^{dp}\|_\infty \leq c_K^{dp} \leq \|c^{dp}\|_\infty$. 
\begin{lemma}\label{lem:LinfPhi}
There exists $M_\Phi$ depending only on $\Phi^D$, $c^{dp}$ and $\O$ such that 
\[
|\Phi_K^{n}| \leq M_\Phi, \qquad \forall K\in\Tt, \; \forall 1\leq n \leq N.
\]
\end{lemma}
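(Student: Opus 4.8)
The statement to prove is Lemma~\ref{lem:LinfPhi}: a uniform $L^\infty$ bound on the discrete potential $\Phi^n_K$, depending only on $\Phi^D$, $c^{dp}$, and $\Omega$.

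\medskip

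The plan is to reduce the claim to a discrete maximum-principle / De Giorgi-type estimate for the discrete Poisson equation \eqref{eq:scheme_Phi}, which is exactly what the excerpt announces: it says the lemma "is a straightforward consequence of the slightly more general Proposition~\ref{prop:Linf_Poisson} stated in appendix, together with the a priori bounds $0 < c_K^n < 1$ and $-\|c^{dp}\|_\infty \leq c_K^{dp} \leq \|c^{dp}\|_\infty$." So the proof is essentially an application: at each fixed time level $n$, the vector $\bPhi^n$ solves the linear system \eqref{eq:scheme_Phi} with right-hand side $m_K(c_K^n + c_K^{dp})$, together with the Dirichlet data $\Phi^D$ on $\Ee^D$ and homogeneous Neumann conditions on $\Ee^N$ built into the stencil via the mirror values \eqref{eq:mirror}. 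Since $0<c_K^n<1$ and $|c_K^{dp}|\le\|c^{dp}\|_\infty$, the source term is bounded in, say, $L^\infty$ (hence in any discrete $L^p$) by a constant depending only on $\|c^{dp}\|_\infty$ and $|\Omega|$. The Dirichlet datum is controlled by $\|\Phi^D\|_{L^\infty(\Omega)}$ (recall $\Phi^D\in H^1\cap L^\infty$). Feeding these two bounds into Proposition~\ref{prop:Linf_Poisson} yields $|\Phi_K^n|\le M_\Phi$ with $M_\Phi$ depending only on $\Phi^D$, $c^{dp}$, and $\Omega$ — and crucially not on the mesh, the time step, or $n$.

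\medskip

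Concretely, I would write: Fix $1\le n\le N$. By Theorem~\ref{thm:main1} the discrete solution satisfies $0<c_K^n<1$ for all $K\in\Tt$, and by \eqref{eq:cKdp} together with $c^{dp}\in L^\infty(\Omega)$ one has $|c_K^{dp}|\le\|c^{dp}\|_{L^\infty(\Omega)}$ for all $K$. Hence the right-hand side of \eqref{eq:scheme_Phi} is $m_K g_K$ with $|g_K| = |c_K^n + c_K^{dp}| \le 1 + \|c^{dp}\|_{L^\infty(\Omega)} =: \ctel{cst:RHSPoisson}$. The discrete Poisson problem \eqref{eq:scheme_Phi} with this source, Dirichlet data $(\Phi_\sig^D)_{\sig\in\Ee^D}$, and homogeneous Neumann data on $\Ee^N$ is precisely of the form covered by Proposition~\ref{prop:Linf_Poisson}; applying it gives $|\Phi_K^n|\le M_\Phi$ for all $K\in\Tt$, where $M_\Phi$ depends only on $\cter{cst:RHSPoisson}$, $\|\Phi^D\|_{L^\infty(\Omega)}$, and $\Omega$ (through its diameter / measure and the Poincaré-type constant appearing in the discrete $H^1\hookrightarrow L^\infty$ estimate for $d\le 3$), hence only on $\Phi^D$, $c^{dp}$, and $\Omega$. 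Since $n$ was arbitrary, the bound holds for all $1\le n\le N$.

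\medskip

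The only genuine content is inside Proposition~\ref{prop:Linf_Poisson}, which the excerpt defers to the appendix; the lemma itself is a one-paragraph corollary. The main point to be careful about — the single place where something could go wrong — is checking that the constant $M_\Phi$ produced is truly mesh- and time-independent: one must make sure the discrete Sobolev/maximum-principle estimate behind Proposition~\ref{prop:Linf_Poisson} is uniform under the mesh-regularity assumption $\zeta_\Tt\ge\zeta^\star$ used later, or, if Proposition~\ref{prop:Linf_Poisson} is stated with a genuine maximum principle (comparison with the solution of an auxiliary problem) rather than a Sobolev embedding, that no regularity constant enters at all. For the present lemma, where the mesh is fixed, even a mesh-dependent bound from a pure maximum principle would suffice, but stating it uniformly is what makes it reusable in the convergence proof of Theorem~\ref{thm:main2}. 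I do not anticipate any other obstacle.
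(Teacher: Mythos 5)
Your proof takes exactly the same route as the paper: Lemma~\ref{lem:LinfPhi} is obtained by applying Proposition~\ref{prop:Linf_Poisson} to the discrete Poisson equation~\eqref{eq:scheme_Phi} with source $c_K^n + c_K^{dp}$ bounded by $1+\|c^{dp}\|_{L^\infty(\O)}$, and the appendix proof of that proposition (an $M$-matrix comparison with the explicit barrier $w(\x)=1+\tfrac1d\bigl(\sup_{\boldsymbol y\in\O}|\boldsymbol y|^2-|\x|^2\bigr)$, giving $\|\bbL^{-1}\|_\infty\le 1+{\rm diam}(\O)^2/(4d)$) is indeed mesh-independent, as you anticipated. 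One small correction: do not cite Theorem~\ref{thm:main1} for the bound $0<c_K^n<1$ --- that theorem is proved \emph{using} this lemma (via the topological degree argument), so the citation is circular; in Section~\ref{ssec:apriori} the bounds $0<c_K^n<1$ are a standing a priori assumption on the solution under consideration, and that is all you need.
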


Next lemma concerns the discrete $L^\infty((0,T);H^1(\O))$ estimate on the electric potential. 
\begin{lemma}\label{lem:LinfH1Phi}
There exists $C$ depending only on $\Phi^D$, $c^{dp}$, $\O$ and $\zeta_\Tt$ such that 
\[
\begin{gathered}
\sum_{\sig \in \Ee} \tau_\sig |D_\sig \bPhi^n|^2 \leq C, \qquad  \forall 1\leq n \leq N\\
\mbox{ and } \left\vert\sum_{K\in\Tt}\sum_{\sig\in\Ee^D\cap \Ee_K} \tau_\sig \Phi_{\sig}^D D_{K\sig} \bPhi^n\right\vert \leq C, \qquad  \forall 1\leq n \leq N.
\end{gathered}
\]
\end{lemma}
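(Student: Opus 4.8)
The plan is to derive the discrete $L^\infty((0,T);H^1(\O))$ bound on $\bPhi^n$ directly from the discrete Poisson equation~\eqref{eq:scheme_Phi} by a standard energy estimate, testing against $\bPhi^n - \bPhi^D$ where $\bPhi^D = (\Phi_\sig^D)$ is understood appropriately on Dirichlet faces. First I would fix $n$ and multiply~\eqref{eq:scheme_Phi} by $\Phi_K^n - \wh\Phi_K^D$ for a suitable discrete lift $\wh\bPhi^D$ of the boundary data (for instance the interpolation of $\Phi^D \in H^1(\O)\cap L^\infty(\O)$ on the mesh), then sum over $K\in\Tt$. A discrete integration by parts on the left-hand side produces $\sum_{\sig\in\Ee}\tau_\sig D_{K\sig}\bPhi^n\, D_{K\sig}(\bPhi^n - \wh\bPhi^D)$, where the boundary terms on $\Ee^N$ vanish by the homogeneous Neumann condition built into the mirror values~\eqref{eq:mirror}, and the boundary terms on $\Ee^D$ are exactly $\sum_{K}\sum_{\sig\in\Ee^D\cap\Ee_K}\tau_\sig \Phi_\sig^D D_{K\sig}\bPhi^n$ up to the contribution of $\wh\bPhi^D$. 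Expanding, I get
\[
\sum_{\sig\in\Ee}\tau_\sig |D_\sig\bPhi^n|^2 = \sum_{\sig\in\Ee}\tau_\sig D_{K\sig}\bPhi^n\, D_{K\sig}\wh\bPhi^D + \sum_{K\in\Tt} m_K(c_K^n + c_K^{dp})(\Phi_K^n - \wh\Phi_K^D).
\]

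Next I would bound the right-hand side. For the first term, Cauchy--Schwarz gives $\sum_\sig \tau_\sig D_\sig\bPhi^n D_\sig\wh\bPhi^D \le \left(\sum_\sig\tau_\sig|D_\sig\bPhi^n|^2\right)^{1/2}\left(\sum_\sig\tau_\sig|D_\sig\wh\bPhi^D|^2\right)^{1/2}$, and the discrete $H^1$ seminorm of the lift $\wh\bPhi^D$ is controlled by $\|\Phi^D\|_{H^1(\O)}$ up to a constant depending on the mesh regularity $\zeta_\Tt$ (this is the only place where $\zeta_\Tt$ enters). For the second term, I would use $0 < c_K^n < 1$ and $|c_K^{dp}| \le \|c^{dp}\|_\infty$ together with the $L^\infty$ bound $|\Phi_K^n| \le M_\Phi$ from Lemma~\ref{lem:LinfPhi} and $|\wh\Phi_K^D| \le \|\Phi^D\|_\infty$; since $\sum_K m_K = |\O|$, this term is bounded by a constant depending only on $\O$, $c^{dp}$, $\Phi^D$. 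Absorbing the $\left(\sum_\sig\tau_\sig|D_\sig\bPhi^n|^2\right)^{1/2}$ factor into the left-hand side via Young's inequality yields the first claimed estimate with a constant uniform in $n$.

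For the second inequality, I would simply estimate $\left|\sum_{K}\sum_{\sig\in\Ee^D\cap\Ee_K}\tau_\sig\Phi_\sig^D D_{K\sig}\bPhi^n\right|$ by Cauchy--Schwarz: it is at most $\left(\sum_{\sig\in\Ee^D}\tau_\sig|\Phi_\sig^D|^2\right)^{1/2}\left(\sum_{\sig\in\Ee^D}\tau_\sig|D_\sig\bPhi^n|^2\right)^{1/2}$. The second factor is controlled by the first estimate just proven, while the first factor is bounded using $|\Phi_\sig^D| \le \|\Phi^D\|_\infty$ and the fact that $\sum_{\sig\in\Ee^D}\tau_\sig = \sum_{\sig\in\Ee^D} m_\sig/d_\sig$ is controlled in terms of the mesh regularity and the geometry of $\Gamma_D$ — alternatively, and more cleanly, one bounds it by $\|\Phi^D\|_\infty^2$ times $\sum_\sig \tau_\sig |D_\sig \wh\bPhi^D|^2 / (\text{lower bound})$ after comparing $\Phi_\sig^D$ with interior values, or one keeps track of this sum as part of the mesh-dependent constant $C$. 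The main (minor) obstacle is purely bookkeeping: choosing the discrete lift $\wh\bPhi^D$ of $\Phi^D$ so that its discrete $H^1$ seminorm is genuinely controlled by $\|\Phi^D\|_{H^1(\O)}$ with a constant depending only on $\zeta_\Tt$ — this is a classical property of TPFA interpolation operators (see~\cite{EGH00}) and I would either cite it or include a short interpolation argument. Everything else is elementary once Lemma~\ref{lem:LinfPhi} is in hand.
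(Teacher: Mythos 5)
Your treatment of the first estimate is correct and essentially identical to the paper's: test the discrete Poisson equation~\eqref{eq:scheme_Phi} against $\Phi_K^n-\Phi_K^D$, absorb the cross term (the paper uses $a(a-b)\geq\tfrac12(a^2-b^2)$ where you use Cauchy--Schwarz plus Young, which is the same thing), control the right-hand side by the $L^\infty$ bounds of Lemma~\ref{lem:LinfPhi}, and invoke \cite[Lemma 13.4]{EGH00} for $\sum_\sig\tau_\sig(D_\sig\bPhi^D)^2\leq C\|\Phi^D\|_{H^1(\O)}^2$.

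The second estimate, however, has a genuine gap. Your Cauchy--Schwarz bound requires controlling $\sum_{\sig\in\Ee^D}\tau_\sig|\Phi_\sig^D|^2$, hence (since $\Phi_\sig^D$ is $O(1)$ and not small) the quantity $\sum_{\sig\in\Ee^D}\tau_\sig=\sum_{\sig\in\Ee^D}m_\sig/d_\sig$. This sum is \emph{not} bounded in terms of $\zeta_\Tt$ and the geometry of $\Gamma_D$: on a quasi-uniform mesh of size $h$ one has $\tau_\sig\sim h^{d-2}$ and about $h^{-(d-1)}$ Dirichlet faces, so the sum scales like $h_\Tt^{-1}$ and blows up under refinement. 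Neither of your fallbacks repairs this: "keeping track of this sum as part of the mesh-dependent constant" is exactly what the lemma forbids (the constant may depend on $\zeta_\Tt$ but must be uniform in $h_\Tt$, since it feeds into the convergence analysis along a refined sequence of meshes), and comparing $\Phi_\sig^D$ with interior values does not produce the needed smallness. The underlying obstruction is structural: the boundary sum is a discrete version of $\int_{\Gamma_D}\Phi^D\,\grad\Phi\cdot\n$, which is not controlled by the $H^1$ seminorm of $\Phi$ alone --- one must use the equation. That is what the paper does: multiplying~\eqref{eq:scheme_Phi} by $\Phi_K^n$ (rather than $\Phi_K^n-\Phi_K^D$) and summing over $K$, a discrete integration by parts yields
\begin{equation*}
\sum_{K\in\Tt}\sum_{\sig\in\Ee^D\cap\Ee_K}\tau_\sig\Phi_\sig^D D_{K\sig}\bPhi^n
=\sum_{\sig\in\Ee}\tau_\sig\left(D_\sig\bPhi^n\right)^2-\sum_{K\in\Tt}m_K\left(c_K^n+c_K^{dp}\right)\Phi_K^n,
\end{equation*}
and both terms on the right are already bounded (the first by the estimate you just proved, the second by the $L^\infty$ bounds). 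You should replace your direct Cauchy--Schwarz argument by this identity.
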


\begin{proof}
As $\Phi^D \in L^\infty \cap H^1(\O)$, it is  discretized into $\bPhi^D \in \R^{\Tt}$ by setting 
\[
\Phi^D_K = \frac1{m_K}\int_K \Phi^D\d\x, \quad \forall K \in \Tt,\ \mbox{ and } 
\Phi^D_\sig = \frac1{m_\sig}\int_\sig \Phi^D\d\bgamma,\qquad \forall \sig\in\Ee^D.
\]
It satisfies  $|\Phi_K^D| \leq \|\Phi^D\|_\infty$ for all $K\in\Tt$.
Multiplying~\eqref{eq:scheme_Phi} by $\Phi_K^n - \Phi_K^D$ and summing over $K\in\Tt$ provides
\be\label{eq:AB}
\sum_{\sig \in \Ee}\tau_\sig D_{K\sig}\bPhi^n D_{K\sig}(\bPhi^n - \bPhi^D) = \sum_{K\in\Tt}m_K (c_K^n + c_K^{dp})(\Phi_K^n - \Phi_K^D). 
\ee
Using the elementary inequality $a(a-b) \geq \frac{a^2-b^2}2 $, we get that 
\[
\sum_{\sig \in \Ee}\tau_\sig D_{K\sig}\bPhi^n D_{K\sig}(\bPhi^n - \bPhi^D) \geq 
\frac12 \sum_{\sig \in \Ee}\tau_\sig (D_\sig \bPhi^n)^2 - \frac12 \sum_{\sig \in \Ee}\tau_\sig (D_\sig \bPhi^D)^2.
\]
Using the boundedness of $c_K^n$, $c_K^{dp}$, $\Phi_K^D$, and of $\Phi_K^n$ (cf. Lemma~\ref{lem:LinfPhi}), we obtain that 
the right-hand side is bounded:
\[
\sum_{K\in\Tt}m_K (c_K^n + c_K^{dp})(\Phi_K^n - \Phi_K^D) \leq C.
\]
Following~\cite[Lemma 13.4]{EGH00}, there exists $C$ depending only on $\zeta_\Tt$ such that 
\be\label{eq:H1PhiD}
\sum_{\sig \in \Ee} \tau_\sig (D_\sig \bPhi^D)^2 \leq C \|\Phi^D\|^2_{H^1(\O)}, 
\ee
which allows to conclude the proof of the first inequality of Lemma~\ref{lem:LinfH1Phi}. Multiplying now the scheme \eqref{eq:scheme_Phi}  by $\Phi_K^n$ and summing over $K\in\Tt$ leads to the second inequality by following the same kind of computations.
\end{proof}

\begin{remark}\label{rem:D_T2} Using Proposition \ref{prop:E_disc}, we notice that Lemma~\ref{lem:LinfH1Phi} implies the existence of $C$ depending only on $c^0$, $\Phi^D$, $\Omega$ and $\zeta_\Tt$ such that:
\[
\sum_{n=1}^N \dt_n \sum_{\sig \in \Ee_{\rm int}} \tau_\sig {\mathcal D}_{\sig}^n \leq C, \qquad \forall N\in \mathbb{N}
\]
\end{remark}
As a last step before establishing the existence of a solution to the scheme, 
we show that the approximate concentrations $\bc^n$ are bounded away from $0$ and $1$. 
Note that contrary to Lemmas~\ref{lem:mass} and \ref{lem:LinfPhi} and to Proposition~\ref{prop:E_disc}, 
the estimate of the following Lemma is not uniform with respect to mesh size and time step. 
\begin{lemma}\label{lem:epsilon}
There exists $\eps>0$ depending on $\Tt, \bdt$, $\Phi^D$, $\bar{c}$, $c^{dp}$ and $\O$ such that 
\[
\eps < c_K^n <1-\eps, \qquad \forall K\in\Tt, \; \forall 1\leq n\leq N.
\]
\end{lemma}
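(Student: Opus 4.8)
The plan is to argue by contradiction, exploiting the energy dissipation estimate from Remark~\ref{rem:D_T2} together with the blow-up of the face dissipation functional near $0$ and $1$ established in Lemma~\ref{lem:dissip}. Suppose the statement fails; then for every $\eps>0$ there is some cell $K$ and some time level $n$ with $c_K^n \leq \eps$ or $c_K^n \geq 1-\eps$. Since the mesh $\Tt$ and the time steps $\bdt$ are fixed and finite, and since Lemma~\ref{lem:mass} guarantees $\sum_K m_K c_K^n = \ov c \, |\O|$ with $0 < \ov c < 1$, there is always at least one cell whose concentration stays bounded away from $0$ (say $c_{L_\star}^n \geq \delta_0$ for some fixed $\delta_0$ depending only on $\ov c$ and $|\O|$) and, symmetrically, at least one cell with $c_{L_\star}^n \leq 1-\delta_0$. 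The idea is to connect a ``bad'' cell to a ``good'' cell through a chain of neighbouring cells and pick up a large dissipation contribution on one of the faces along that chain.

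First I would fix $M = M_\Phi$ from Lemma~\ref{lem:LinfPhi}, so that all potentials $\Phi_K^n$ lie in $[-M,M]$ uniformly, and fix $\delta_0 = \delta_0(\ov c, |\O|) \in (0,1)$ as above. Then, by Lemma~\ref{lem:dissip}, $\Psi_{\delta_0,M}(s) \to +\infty$ as $s\to 0$ and $\Upsilon_{\delta_0,M}(s)\to +\infty$ as $s\to 1$; in particular, for the constant $C$ from Remark~\ref{rem:D_T2} (divided by the smallest $\tau_\sig \dt_n$, both of which are positive since the mesh and time discretization are fixed), there is $\eps>0$ such that $\Psi_{\delta_0,M}(s) > C / (\dt_{\min}\tau_{\min})$ whenever $s\leq\eps$, and likewise $\Upsilon_{\delta_0,M}(s) > C/(\dt_{\min}\tau_{\min})$ whenever $s\geq 1-\eps$. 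I claim this $\eps$ works. Indeed, suppose $c_K^n \leq \eps$ for some $K,n$. Because $\O$ is connected and the mesh is admissible, there is a path $K = K_0, K_1, \dots, K_p = L_\star$ of cells with $K_{i-1}|K_i \in \Ee_{\rm int}$ and $c_{L_\star}^n \geq \delta_0$. Along this path there is a first index $i$ with $c_{K_i}^n \geq \delta_0$ while $c_{K_{i-1}}^n < \delta_0$; in fact, scanning from $K_0$, one finds a face $\sig = K_{i-1}|K_i$ with $c_{K_{i-1}}^n \leq \eps$ (taking the path to stay at low concentration until the last possible step, or simply: the cell adjacent to a cell of concentration $\le\eps$ that first exceeds $\delta_0$) — more carefully, one argues that there exists an interior face $\sig=A|B$ with $c_A^n \leq \eps$ and $c_B^n \geq \delta_0$, by a discrete intermediate-value argument along the path. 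For that face, the definition of $\Psi_{\delta_0,M}$ gives $\Dd_\sig^n \geq \Psi_{\delta_0,M}(c_A^n) > C/(\dt_{\min}\tau_{\min})$, hence $\dt_n \tau_\sig \Dd_\sig^n > C$, contradicting Remark~\ref{rem:D_T2}. The case $c_K^n \geq 1-\eps$ is handled symmetrically using $\Upsilon_{\delta_0,M}$.

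The main obstacle, and the point that needs care, is the combinatorial/connectedness step: producing an interior face whose two adjacent concentrations are respectively $\leq \eps$ and $\geq \delta_0$. This requires that the set of cells with concentration $\leq\eps$ is a proper nonempty subset of $\Tt$ (nonempty by the contradiction hypothesis, proper because mass conservation forces some cell above $\delta_0 > \eps$), and that the cell adjacency graph of an admissible mesh is connected — which follows from connectedness of $\O$ together with Definition~\ref{def:mesh}. The boundary of this subset in the adjacency graph then yields the desired face. One should also note the mild subtlety that the constant $\delta_0$ must be chosen strictly below $1$ and strictly above $0$ simultaneously with a margin, so that $\Psi$ and $\Upsilon$ are being evaluated in genuinely degenerate regimes; taking for instance $\delta_0 = \min(\ov c, 1-\ov c)/2$ (up to the $|\O|$-normalisation) suffices. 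Everything else is bookkeeping: the $L^\infty$ bound on $\bPhi^n$ is already in hand from Lemma~\ref{lem:LinfPhi}, and the dissipation budget is already in hand from Remark~\ref{rem:D_T2}, so the proof is essentially a pigeonhole argument wrapped around Lemma~\ref{lem:dissip}.
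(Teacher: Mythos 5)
Your overall strategy --- mass conservation produces a ``good'' cell, and the dissipation budget of Remark~\ref{rem:D_T2} combined with the blow-up of $\Psi_{\delta,M}$ and $\Upsilon_{\delta,M}$ from Lemma~\ref{lem:dissip} forces concentrations away from $0$ and $1$ --- is exactly the right one, and is the one the paper uses. But the combinatorial step you yourself flag as ``the main obstacle'' is a genuine gap, and it cannot be repaired in the form you propose. There is in general \emph{no} interior face $\sig=A|B$ with $c_A^n\leq\eps$ and $c_B^n\geq\delta_0$: along a path from the bad cell to the good cell the concentration may increase gradually, say taking the values $\eps,\,2\eps,\,4\eps,\dots$ until it reaches $\delta_0$. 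The boundary of the set $\{K:\ c_K^n\le\eps\}$ in the adjacency graph only yields a face whose other side satisfies $c_B^n>\eps$, and the bound $\Dd_\sig^n\ge\Psi_{\eps,M}(c_A^n)$ is useless because Lemma~\ref{lem:dissip} is a statement for \emph{fixed} $\delta$ as $c_L\to0$; when the threshold and the argument go to $0$ together one has, e.g.\ for the centered flux with $c_A=\eps$, $c_B=2\eps$, $\Dd(c_B,c_A,\Phi_B,\Phi_A)\le \tfrac32\eps\,(\log 2+2M)^2\to 0$. Worse, summing such contributions over the whole geometric chain gives a total dissipation of order $\delta_0(\log 2+2M)^2$, bounded independently of $\eps$, so this configuration does not violate the dissipation budget at all and no contradiction is reached. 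Your single-face ``discrete intermediate-value'' claim is therefore false, not merely delicate.

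The paper's proof replaces the single large jump by an iteration in which the lower bound is allowed to degrade at each step. Starting from $K_0$ with $c_{K_0}^n\ge\delta_0=\ov c$, every neighbour $K_1$ satisfies $\tau_\sig\Dd_\sig^n\le C_\Dd$, hence
\[
c_{K_1}^n\ \ge\ \delta_1:=\inf\Bigl\{c_L\in(0,1)\ :\ \Psi_{\delta_0,M_\Phi}(c_L)\le C_\Dd\big/\min_{\sig\in\Ee_{\rm int}}\tau_\sig\Bigr\}>0,
\]
and one then propagates from $K_1$ with the weaker threshold $\delta_1$ in place of $\delta_0$, obtaining $\delta_2$, and so on; since the mesh is finite, finitely many iterations cover all cells and $\eps=\min_i\delta_i>0$. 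This is precisely why the resulting $\eps$ depends on $\Tt$ and $\bdt$, as the statement allows. If you want to keep the contradiction/pigeonhole phrasing, you would have to run this same cell-by-cell propagation inside it, which amounts to the paper's proof.
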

\begin{proof}
The proof follows the idea of~\cite[Lemma 3.10]{CG16_MCOM} (see also~\cite[Lemma 3.7]{CG_VAGNL}).
Let us establish the lower bound, since the outline of the proof of the upper bound is similar. 
Here again, we drop the superscript $j$ for the ease of reading. 

Because of assumption~\eqref{eq:c^0} on the initial data and of the choice \eqref{eq:cK0} for its discretization, one knows 
that 
\[
\frac1{m_\O}\sum_{K\in\Tt}m_K c_K^0  = \ov c  \in (0,1). 
\]
Lemma~\ref{lem:mass} ensures the conservation of mass, so that 
\[
\frac1{m_\O}\sum_{K\in\Tt}m_K c_K^n  = \ov c  \in (0,1), \qquad \forall  n \geq 1.
\]
This implies that there exists $K_0\in\Tt$ such that $c_{K_0}^n \geq \ov c >0$. We set $\delta_0= {\ov c}$.

Denote by $\bPhi[\bc^n]$ the unique solution to the linear system~\eqref{eq:scheme_Phi}. Lemma \ref{lem:LinfH1Phi} ensures that the functional $\bc^n \mapsto E_\Tt(\bc^n, \bPhi[\bc^n])$ is bounded on $(0,1)^{\Tt}$. 
Therefore, it follows from Proposition~\ref{prop:E_disc} that there exists $C_\Dd$ depending (among others) on  $\dt_n$ 
such that 
\be\label{eq:D_T2}
\Dd_\Tt^n =  \sum_{\sig \in \Ee_{\rm int}}\tau_{\sig} \Dd_\sig^n \leq C_\Dd, 
\qquad \forall 1\leq n \leq N.
\ee
In particular, for all face $\sig \in \Ee_{K_0}$, one gets that $\tau_{\sig}\Dd_\sig^n \leq C_\Dd$.
Therefore, the concentration $c_{K_1}^n $ in any neighboring cell $K_1$ of $K_0$ 
is bounded away from $0$ by 
\begin{align*}
c_{K_1}^n\geq&  \inf\left\{c_{L}\in(0,1)\ ;\   {\Psi}_{{c_{K_0}^n},M_{\Phi}}(c_L)\leq {C_\Dd} / \tau_{\sig}\right\} \\
{\geq}&\;{\inf\left\{c_{L}\in(0,1)\ ;\   {\Psi}_{\delta_0,M_{\Phi}}(c_L)\leq {C_\Dd} / \min_{\sig\in\Ee_{\rm int}} \tau_\sig\right\} =:\delta_1>0}
\end{align*}
{thanks to the monotonicity of $\delta\mapsto \Psi_{\delta,M}(c_L)$.}
Owing to Lemma~\ref{lem:dissip}, the above right-hand side is bounded away from $0$ by some quantity that might also 
depends on $\Tt$ because of the presence of $\tau_\sig$. This lower bound can be set to $\delta_1$ and we can then iterate the procedure to the neighboring cells of $K_1$, and so on and so forth. Since the mesh is finite, only a finite number of iterations $ I_\Tt$ is needed to cover all the cells, 
whence a uniform lower bound on $c_K^n$: $\epsilon=\min_{1\leq i\leq I_\Tt} \delta_i$, {where}
\[
{\delta_{i+1}= \inf\left\{c_{L}\in(0,1)\ ;\   {\Psi}_{\delta_i,M_{\Phi}}(c_L)\leq {C_\Dd} / \min_{\sig\in\Ee_{\rm int}}\tau_\sig\right\}>0, 
\quad \delta_0=\ov c.}
\]
\end{proof}

% ============================================================
\subsection{Existence of a solution to the schemes}\label{ssec:existence}

Based on the estimates derived in the previous section, we can establish the 
existence of at least one solution to each scheme. This completes the proof of Theorem~\ref{thm:main1}.

\begin{prop}\label{prop:existence}
Let $\bc^0$ be defined by~\eqref{eq:cK0}. Then, for all $1\leq n\leq N$, the nonlinear system of equations \eqref{eq:mirror}--\eqref{eq:fluxint}, supplemented either with \eqref{eq:centered_flux}, \eqref{eq:SEDAN_flux}, \eqref{eq:activity_flux}, or \eqref{eq:Marianne_flux}, has a solution $(\bc^{n}, \bPhi^{n})\in \R^{\Tt} \times \R^{\Tt}$.
\end{prop}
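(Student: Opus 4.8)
The plan is to establish existence of a discrete solution via the Leray–Schauder topological degree argument alluded to at the start of Section~\ref{sec:existence}. First I would fix $n$ and assume inductively that $\bc^{n-1}\in[0,1]^{\Tt}$ is known. The Poisson system \eqref{eq:scheme_Phi} is linear in $\bPhi^n$ with an invertible matrix (a weighted graph Laplacian with Dirichlet nodes, invertible because $\Gamma_D\neq\emptyset$), so $\bPhi^n=\bPhi[\bc^n]$ is a continuous --- indeed affine --- function of $\bc^n$. Substituting this into \eqref{eq:scheme_c} reduces the problem to a nonlinear system of $\#\Tt$ equations in the single unknown $\bc^n\in\R^{\Tt}$, and the goal is to show this system has a zero.

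The heart of the argument is to introduce a homotopy parameter $\lambda\in[0,1]$ interpolating between the actual scheme at $\lambda=1$ and a trivially solvable system at $\lambda=0$: for instance, replace the flux term in \eqref{eq:scheme_c} by $\lambda\sum_{\sig}F_{K\sig}^n$ and the Poisson coupling by $\lambda$ times the charge, so that at $\lambda=0$ the unique solution is $c_K^n=c_K^{n-1}$ for all $K$. One then needs the a priori estimates of Section~\ref{ssec:apriori} to hold \emph{uniformly in} $\lambda$: the mass conservation of Lemma~\ref{lem:mass}, the energy/dissipation decay of Proposition~\ref{prop:E_disc} (hence the bound $\Dd_\Tt^n\le C_\Dd$ as in \eqref{eq:D_T2}), the $L^\infty$ bound $|\Phi_K^n|\le M_\Phi$ of Lemma~\ref{lem:LinfPhi}, and crucially the strict bounds $\eps<c_K^n<1-\eps$ of Lemma~\ref{lem:epsilon}. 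The latter confine the solution set of the homotopy to a compact subset of the open set $\cO=\{(c_K)_{K\in\Tt}\;:\;\eps<c_K<1-\eps\ \forall K\}$, with no solutions on $\partial\cO$. Since the map is continuous on $\ov\cO$ (the fluxes $\mathcal F$ are continuous on $(0,1)^2\times\R^2$, and $\bPhi[\cdot]$ is affine), the topological degree is well defined and homotopy-invariant; at $\lambda=0$ it equals the degree of a simple affine map on $\cO$ with the single zero $\bc^{n-1}\in\cO$, hence $\pm1\neq0$. Therefore the degree at $\lambda=1$ is nonzero, which yields a solution $\bc^n\in\cO$, and then $\bPhi^n=\bPhi[\bc^n]\in\R^{\Tt}$.

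The step I expect to be the main obstacle is verifying that the a priori estimates, in particular Lemma~\ref{lem:epsilon}, genuinely survive the introduction of the homotopy parameter. The proof of Lemma~\ref{lem:epsilon} uses the mass-conservation identity to locate a cell $K_0$ with $c_{K_0}^n\ge\ov c$, then propagates a lower bound cell-by-cell using the coercivity of the face dissipation (Lemma~\ref{lem:dissip}) together with the bound on $\Dd_\Tt^n$; one must check that the homotopy is chosen so that both the mass balance and the energy estimate (through the Poisson coupling) remain valid for every $\lambda\in[0,1]$, possibly with $\lambda$-dependent but $\lambda$-uniformly-bounded constants. Once uniformity is secured, the degree-theoretic conclusion is essentially formal. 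A final remark: since we work in the open set $\cO\subset(0,1)^{\Tt}$ where all $\mathcal F$ are smooth, no regularization of the singular logarithmic nonlinearities is needed, which is precisely why the argument closes cleanly.
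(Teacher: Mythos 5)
Your overall strategy --- a Leray--Schauder degree argument confined to a compact set $[\eps,1-\eps]^{\Tt}\times[-M_\Phi,M_\Phi]^{\Tt}$ delimited by the a priori estimates of Section~\ref{ssec:apriori} --- is exactly the one used in the paper. However, the specific one-parameter homotopy you propose (multiplying the whole flux term and the Poisson source by $\lambda$) does not deliver the $\lambda$-uniform bounds you defer to at the end, and this is precisely why the paper uses a three-stage homotopy instead. Concretely: if the flux in \eqref{eq:scheme_c} is replaced by $\lambda\sum_\sig F_{K\sig}$, then running the energy argument of Proposition~\ref{prop:E_disc} only gives $\lambda\,\Dd_\Tt^{(\lambda)}\leq C/\dt_n$, i.e.\ a dissipation bound $\Dd_\Tt^{(\lambda)}\leq C/(\lambda\dt_n)$ that blows up as $\lambda\to 0$. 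Since the cell-by-cell propagation in Lemma~\ref{lem:epsilon} produces a lower bound $\delta_1=\inf\{c_L:\Psi_{\delta_0,M}(c_L)\leq C_\Dd/\min_\sig\tau_\sig\}$ that degenerates to $0$ as $C_\Dd\to\infty$, you do not obtain an $\eps$ valid for all $\lambda\in[0,1]$, and the solution set is not a priori confined away from $\p\cO$. This can be repaired (e.g.\ by a compactness argument: along $\lambda_j\to0$, Cauchy--Schwarz together with $\Cc_\sig\leq 1$ and $\lambda_j\Dd_\Tt\leq C/\dt_n$ gives $\lambda_j|F_{K\sig}|\leq \tau_\sig\sqrt{\lambda_j\,C/\dt_n}\to0$, forcing $\bc^{(\lambda_j)}\to\bc^{n-1}$; note that $\Cc_\sig\leq1$ fails for the activity flux, which needs separate treatment), but this extra argument is the actual content of the step you label ``essentially formal,'' and it is absent from your proposal. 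The paper sidesteps the issue entirely by never switching the flux off: it interpolates first between a linear diffusion and the monotone $r$-diffusion (stage $\lambda\in[0,1]$, where the inequality $(r(a)-r(b))(h(a)-h(b))\geq(a-b)(h(a)-h(b))$ keeps the dissipation coercive uniformly in $\lambda$), then deforms to the target flux at $\bPhi={\bf 0}$, and only then reactivates the potential; at every stage the dissipation estimate is nondegenerate and Lemma~\ref{lem:epsilon} applies verbatim.

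A second, smaller defect: your $\lambda=0$ endpoint has the unique zero $\bc^{(0)}=\bc^{n-1}$, but at the first time step $\bc^0$ defined by \eqref{eq:cK0} need only satisfy $0\leq c_K^0\leq1$, so this zero may sit on the boundary of $[0,1]^{\Tt}$ and hence outside every set $\cO=(\eps,1-\eps)^{\Tt}$; the degree at $\lambda=0$ is then not $\pm1$ on $\cO$ as claimed. The paper's $\lambda=0$ endpoint is instead a discrete backward-Euler heat equation, whose solution is strictly inside $(0,1)^{\Tt}$ as soon as $\ov c\in(0,1)$, so the base degree is well defined. In short: right framework, but the homotopy must be chosen so that the dissipation never degenerates and the initial zero lies in the interior, and your version satisfies neither without additional work.
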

\begin{proof}
The proof is a proof by induction; it relies on a topological degree argument~\cite{LS34,Dei85} at each time step. The idea is to transform in a continuous 
way our complex nonlinear system into a linear system while guaranteeing that a priori estimates controlling the 
solution remain valid all along the homotopy. We sketch the main ideas of the proof, making the homotopy (parametrized 
by $\lambda \in [0,3]$) explicit. 

We denote by $\bc^\star=\bc^{n-1} \in (0,1)^{\Tt}$ 
the discrete concentration at the previous time step. We are interested in the existence of zeros for a functional 
\[
\Hh: \begin{cases}
[0,3] \times (0,1)^\Tt \times \R^\Tt \to \R^\Tt \times \R^\Tt\\
(\lambda, \bc, \bPhi) \mapsto \Hh(\lambda, \bc,\bPhi)
\end{cases}
\]
that boils down to the scheme \eqref{scheme} when $\lambda = 3$. For sake of simplicity, instead of giving the definition 
of $\Hh$ for the different values of $\lambda$, we give a sense to the fact that  $\bc^{(\lambda)},\bPhi^{(\lambda)}$ is solution to 
$\Hh(\lambda,\bc^{(\lambda)},\bPhi^{(\lambda)})=0$.

We start with $\lambda\in [0,1]$: $\bc^{(\lambda)}$ is defined as the solution to the nonlinear system of equation
\be\label{eq:lambda<1}
m_K \frac{c_K^{(\lambda)} - c_K^{\star}}{\dt_n} + (1-\lambda)\sum_{\sig \in \Ee_{K,{\rm int}}}\tau_\sig (c_K^{(\lambda)} - c_{L}^{(\lambda)}) + 
\lambda \sum_{\sig \in \Ee_{K,\rm int}}\tau_\sig \left(r(c_K^{(\lambda)}) - r(c_{L}^{(\lambda)})\right)= 0, 
\ee
while $\bPhi^{(\lambda)} = 0$. Let us remark that for $\lambda=0$, it boils down to an invertible linear system of equations. Moreover, adapting the proof of Proposition~\ref{prop:E_disc} and using the property $(r(a)-r(b))(h(a)-h(b))\geq (a-b)(h(a)-h(b))$ for all $(a,b)\in (0,1)^2$, we get:
$$
E_\Tt(\bc^{(\lambda)},{\mathbf 0})-E_\Tt(\bc^{\star},{\mathbf 0})\leq - \dt_n \sum_{\sigma\in\Ee_{\rm int}}\tau_\sig (c_K^{(\lambda)}-c_L^{(\lambda)})(h(c_K^{(\lambda)})-h(c_L^{(\lambda)})).
$$
As the associated dissipation function defined by $\Dd(c_K,c_L)=(c_K-c_L)(h(c_K)-h(c_L))$ is clearly coercive in the sense of Lemma~\ref{lem:dissip}, 
we can deduce as in Lemma~\ref{lem:epsilon} the existence of $\epsilon_1>0$ such that $\epsilon_1<c_K^{(\lambda)}<1-\epsilon_1$ for all $K\in \Tt$ and for all $\lambda\in [0,1]$. 

For $\lambda \in [1,2]$, one lets our system evolve from the monotone scheme corresponding to $\lambda =1$ (which, due to Remark~\ref{rem:Sedan} corresponds to the Sedan scheme for the case without electrical potential) to the scheme 
with the expected numerical  fluxes $F_{K\sig}$. But the electrical potential is still blocked to $\bPhi^{(\lambda)} = {\bf 0}$, i.e., 
\be\label{eq:lambda<2}
\begin{gathered}
m_K \frac{c_K^{(\lambda)} - c_K^{\star}}{\dt_n} + (2-\lambda)\sum_{\sig \in \Ee_{K,{\rm int}} }\tau_\sig \left(r(c_K^{(\lambda)}) - r(c_{L}^{(\lambda)})\right)
+ (\lambda-1) \sum_{\sig \in \Ee_{K,{\rm int}}}F_{K\sig}^{(\lambda)}= 0,\\
F_{K\sig}^{(\lambda)}=\tau_\sig {\mathcal F}(c_K^{(\lambda)},c_L^{(\lambda)},0,0).
\end{gathered}
\ee
with $\Ff$ defined either by \eqref{eq:centered_flux}, \eqref{eq:SEDAN_flux}, \eqref{eq:activity_flux}, or \eqref{eq:Marianne_flux}.
Thanks to Lemma~\ref{lem:epsilon}, there exists $\eps_2>0$ such that $\epsilon_2<c_K^{(\lambda)}<1-\epsilon_2$ for all $K\in \Tt$ and for all $\lambda\in [1,2]$. 

During the last step, $\lambda \in [2,3]$, we reactivate progressively the electrical potential while keeping equation~\eqref{eq:scheme_c}.
Defining 
\[
\Phi^{D,(\lambda)}_\sig = (\lambda-2) \Phi^{D}_\sig, \qquad \forall \sig \in \Ee^D, 
\]
$(\bc^{(\lambda)},\bPhi^{(\lambda)})$ is defined, for all $\lambda\in [2,3]$ as the solution to the nonlinear system: for all $K\in\Tt$, 
\[
\begin{gathered}
m_K \frac{c_K^{(\lambda)} - c_K^{\star}}{\dt_n} +\sum_{\sig \in \Ee_{K,{\rm int}}}F_{K\sig}^{(\lambda)}=0, \mbox{ with } F_{K\sig}^{(\lambda)}=\tau_\sigma\Ff(c_K^{(\lambda)},c_L^{(\lambda)}, (\lambda-2) \Phi_K^{(\lambda)},(\lambda-2) \Phi_L^{(\lambda)}),\\
-\sum_{\sig \in \Ee_K} \tau_\sig D_{K\sig} \bPhi^{(\lambda)} = (\lambda-2) m_K (c_K^{(\lambda)}+c_K^{dp}).
\end{gathered}
\]
Thanks to Proposition~\ref{prop:Linf_Poisson}, one has $\left|\Phi_K^{(\lambda)}\right| \leq M_\Phi$  for all $K \in \Tt$ and for all  $\lambda \in [2,3]$.
Moreover, as in Lemma~\ref{lem:epsilon}, we can establish the existence of $\eps_3>0$ such that $\epsilon_3<c_K^{(\lambda)}<1-\epsilon_3$ for all $K\in \Tt$ and for all $\lambda\in [2,3]$. 

Finally, all along the homotopy parametrized by $\lambda \in [0,3]$, the solution $\left(\bc^{(\lambda)},\bPhi^{(\lambda)}\right)$ remains 
inside the compact subset $[\eps,1-\eps]^\Tt\times [-M_\Phi-1,M_\phi+1]^\Tt$ with $\eps=\min(\eps_1,\eps_2,\eps_3)$. Thus the topological degree 
corresponding to $\Hh(\lambda,\bc, \bPhi) = {\bf 0}$ and to the set $[\eps,1-\eps]^\Tt\times [-M_\Phi-1,M_\phi+1]^\Tt$ is equal to one all 
along the homotopy and  in particular for $\lambda = 3$. This ensures the existence of (at least) one solution to the scheme
\eqref{scheme}.
\end{proof}

% ==========================================================
% 					Convergence
% ==========================================================
\section{About the convergence towards a weak solution}\label{sec:convergence}

The goal of this section is to prove Theorem~\ref{thm:main2}, which states the 
convergence of the centered scheme \eqref{scheme}, \eqref{eq:centered_flux}, 
and the Sedan scheme \eqref{scheme}, \eqref{eq:SEDAN_flux}, towards a weak solution 
to the continuous problem in the sense of Definition~\ref{Def:weaksol}. Unfortunately, 
the proof we propose here neither applies to the activity base scheme 
\eqref{scheme}, \eqref{eq:activity_flux}, nor to the Bessemoulin-Chatard scheme 
\eqref{scheme}, \eqref{eq:Marianne_flux}. This does 
not mean that these schemes do not converge. Indeed, numerical evidences 
provided in Section~\ref{sec:numerics} seem to show that all the four schemes converge. 

We consider here a sequence $\left(\Tt_m, \Ee_m, \left(\x_K\right)_{K\in\Tt_m}\right)_{m\geq 1}$ 
of admissible discretization with $h_{\Tt_m}, \ov \bdt_m$ tending to $0$ as $m$ tends to $+\infty$, 
while the regularity $\zeta_{\Tt_m}$ remains uniformly bounded from below by a positive constant $\zeta^\star$. 
Theorem~\ref{thm:main1} provides the existence of a family of discrete solutions 
$\left(\bc_m, \bPhi_m\right)_m= \left( \left(c_K^n\right)_{K\in\Tt_m, 1 \leq n \leq N_m}, \left(\Phi_K^n\right)_{K\in\Tt_m, 1 \leq n \leq N_m}\right)$.
To prove Theorem~\ref{thm:main2}, we first establish in Section~\ref{ssec:compact} some compactness properties on the family 
of piecewise constant approximate solutions $(c_{\Tt_m,\bdt_m}, \Phi_{\Tt_m,\bdt_m})$ satisfied by the centered scheme and the Sedan scheme. Then we identify 
the limit as a weak solution in Section~\ref{ssec:identify}. 

In order to enlighten the notations, 
we remove the subscript $m$ as soon as it is not necessary for understanding.

% ============================================================
\subsection{Reconstruction operators}\label{ssec:reconstruct}

In order to carry out the analysis of convergence, we introduce some reconstruction operators 
following the methodology proposed in~\cite{kangourou_2018}. 

The operators $\pi_\Tt: \R^\Tt \to L^\infty(\O)$ and $\pi_{\Tt,\bdt}:\R^{\Tt\times N}\to L^\infty((0,T)\times \O)$ are defined respectively by 
\[
\pi_\Tt\bu(\x) = u_K \quad\text{if}\; \x \in K, \qquad \forall \bu = \left(u_K\right)_{K\in\Tt}, 
\]
and
\[
\pi_{\Tt,\bdt}\bu(t,\x) = u_K^n \quad\text{if}\; (t,\x) \in (t_{n-1},t_n] \times K, \qquad \forall \bu = \left(u_K^n\right)_{K\in\Tt, 1 \leq n \leq N}. 
\]
These operators allow to pass from the discrete solution ${(\bc^n, \bPhi^n)}_{1 \leq n \leq N}$ to the approximate solution since 
\[
c_{\Tt,\bdt} = \pi_{\Tt,\bdt} \left(\bc^n\right)_n, \qquad \Phi_{\Tt,\bdt} = \pi_{\Tt,\bdt} \left(\bPhi^n\right)_n.
\]

But in order to carry out the analysis, we further need to introduce approximate gradient reconstruction. 
Since the boundary conditions play a crucial role in the definition of the gradient, we need to enrich the discrete solution 
by face values $\left(c_\sig^n\right)_{\sig\in\Ee_{\rm ext}, 1\leq n \leq N}$ and $\left(\Phi_\sig^n\right)_{\sig\in\Ee_{\rm ext}, 1\leq n \leq N}$ 
defined by $c_\sig^n = c_{K\sig}^n$ and $\Phi_\sig^n = \Phi_{K\sig}^n$. With a slight abuse of notations, we still denote by 
$\bc^n = \left((c_K^n)_{K\in\Tt},(c_\sig^n)_{\sig\in\Ee_{\rm ext}}\right)$ and  $\bPhi^n = \left((\Phi_K^n)_{K\in\Tt},(\Phi_\sig^n)_{\sig\in\Ee_{\rm ext}}\right)$
the elements of $(0,1)^{\Tt+\Ee_{\rm ext}}$ and  $\R^{\Tt+\Ee_{\rm ext}}$ containing both the cell values and 
the exterior faces values of the concentration and the potential respectively. 

For $\sig = K|L \in \Ee_{\rm int}$, we denote by $\Delta_\sig$ the diamond cell corresponding to $\sig$, that is 
the interior of the convex hull of $\{\sig, \x_K, \x_L\}$. For $\sig \in \Ee_{\rm ext}$, the diamond cell $\Delta_\sig$ 
is defined as the interior of the convex hull of  $\{\sig, \x_K\}$. 
The approximate gradient $\grad_{\Tt}: \R^{\Tt+\Ee_{\rm ext}} \to L^2(\O)^d$ we use in the analysis is merely weakly consistent (unless $d=1$) and takes its source in~\cite{CHLP03, EG03}.
It is piecewise constant on the diamond cells $\Delta_\sig$, and it is defined as follows:
\[
\grad_{\Tt} \bu(\x) = - d \frac{D_{K\sig} \bu}{d_\sig} \n_{K\sig} \quad \text{if}\; \x \in \Delta_\sig, \qquad \forall \bu \in \R^{\Tt+\Ee_{\rm ext}}.
\]
We also define $\grad_{\Tt,\bdt}: \R^{(\Tt+\Ee_{\rm ext})\times N} \to L^2(Q_T)^d$ by setting 
\[
\grad_{\Tt,\bdt}\bu(t,\cdot) = \grad_\Tt \bu^n \quad \text{if}\; t\in(t_{n-1},t_n], \qquad \forall \bu = \left(\bu^n\right)_{1\leq n\leq N} \in \R^{(\Tt+\Ee_{\rm ext})\times N}.
\]

Let us recall now some key properties to be used in the analysis. 
First, for all $\bu, \bv \in \R^{\Tt+\Ee_{\rm ext}}$, there holds 
\[
\sum_{\sig \in \Ee}\tau_\sig D_{K\sig} \bu D_{K\sig} \bv = \frac1d \int_\O \grad_{\Tt} \bu\cdot \grad_{\Tt} \bv \d\x.
\]
This implies in particular that
\be\label{eq:norm_L2H1}
\sum_{\sig \in \Ee}\tau_\sig |D_\sig \bu|^2 = \frac1d\int_{\O} |\grad_{\Tt} \bu|^2 \d\x, \qquad \forall \bu \in  \R^{\Tt+\Ee_{\rm ext}}.
\ee

% ============================================================
\subsection{Compactness properties for the approximate concentration}\label{ssec:compact}

The goal here is to take advantage of the a priori estimates established in Section~\ref{ssec:apriori} 
to recover enough compactness for the sequences of approximate solutions.
\begin{lemma}\label{lem:L2H1_xi}
Let $(\bc_m,\bPhi_m)$ be the family of discrete solutions defined either by the centered scheme or by the Sedan scheme.
There exists $C$ depending only on $\Phi^D$, $\O$, $\zeta^\star$, $c_0$, $c^{dp}$ and $T$, such that 
\[
\iint_{Q_T} |\grad_{\Tt_m, \bdt_m} r(\bc_m)|^2+ \left(\pi_{\Tt_m, \bdt_m} r(\bc_m)\right)^2\d\x\d t \leq C.
\]

\end{lemma}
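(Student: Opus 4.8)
The statement splits into two bounds: an $L^2((0,T);H^1(\O))$-type bound on $r(\bc_m)$ via the discrete gradient, and an $L^2(Q_T)$ bound on $r(\bc_m)$ itself. The plan is to extract the first from the discrete energy dissipation estimate of Proposition~\ref{prop:E_disc} (in its time-integrated form, Remark~\ref{rem:D_T2}), and to obtain the second from the first using a discrete Poincaré--Wirtinger inequality together with the mass conservation of Lemma~\ref{lem:mass}.

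\textbf{Step 1: control of $\grad_{\Tt,\bdt} r(\bc_m)$.} Fix an interior face $\sig = K|L$ and a time level $n$. The key pointwise inequality to establish is that, for the centered and Sedan fluxes,
\[
\tau_\sig \left(D_\sig r(\bc^n)\right)^2 \leq C\, \tau_\sig \Dd_\sig^n,
\]
i.e. the squared jump of $r(c)$ across a face is controlled by the face dissipation $\Dd_\sig^n = \Cc_\sig^n \left| D_\sig(h(\bc^n)+\bPhi^n)\right|^2$. Since $\Dd_\sig^n \geq \Cc_\sig^n |D_\sig h(\bc^n)|^2 - (\text{cross terms involving } \Phi)$, I would first use the elementary inequality $a^2 \geq \tfrac12 b^2 - (a-b)^2$ to write $|D_\sig(h(\bc^n)+\bPhi^n)|^2 \geq \tfrac12 |D_\sig h(\bc^n)|^2 - |D_\sig \bPhi^n|^2$, so that
\[
\Dd_\sig^n \geq \tfrac12 \Cc_\sig^n |D_\sig h(\bc^n)|^2 - \Cc_\sig^n |D_\sig \bPhi^n|^2.
\]
Now, by Lemma~\ref{lem:avg} one has $\Cc_\sig^n \leq \max(c_K^n,c_L^n) \leq 1$, so the second term is bounded by $|D_\sig \bPhi^n|^2$, which after multiplying by $\tau_\sig$ and summing over faces and time levels is controlled by Lemma~\ref{lem:LinfH1Phi} (the $L^\infty((0,T);H^1)$ bound on $\bPhi$, summed against $\sum_n \dt_n \leq T$). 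For the first term, the crucial algebraic fact is that $\Cc_\sig^n |D_\sig h(\bc^n)|^2$ dominates $(D_\sig r(\bc^n))^2$ up to a universal constant; this follows from $r' = c\,h'$ together with the fact that $\Cc_\sig^n$ is a convex combination of $c_K^n, c_L^n$ and a chord estimate comparing $r(c_K)-r(c_L)$ with $\sqrt{\Cc_\sig^n}\,(h(c_K)-h(c_L))$ — concretely, $|r(c_K)-r(c_L)|^2 \leq \Cc_\sig^n |h(c_K)-h(c_L)|^2$ should hold because $r$ is an antiderivative of $c h'$ and $\Cc_\sig^n$ lies between the cell concentrations (one checks this by a mean-value / Cauchy--Schwarz argument on $\int h'(c)\sqrt{c}\cdot\sqrt{c}$). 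Summing the resulting face inequality against $\tau_\sig$ over $\Ee_{\rm int}$ and $\dt_n$ over $n$, using $D_{K\sig} r(\bc^n)=0$ on exterior faces (no-flux), and invoking Remark~\ref{rem:D_T2} and Lemma~\ref{lem:LinfH1Phi}, gives
\[
\sum_{n=1}^{N}\dt_n \sum_{\sig\in\Ee}\tau_\sig (D_\sig r(\bc^n))^2 \leq C,
\]
which by the identity~\eqref{eq:norm_L2H1} is exactly $\tfrac1d \iint_{Q_T}|\grad_{\Tt,\bdt} r(\bc_m)|^2 \leq C$.

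\textbf{Step 2: the $L^2(Q_T)$ bound on $r(\bc_m)$.} Here I would apply a discrete Poincaré--Wirtinger inequality: there is a constant depending only on $\O$ and $\zeta^\star$ such that for any $\bu\in\R^{\Tt}$,
\[
\|\pi_\Tt \bu - \avg_\O(\pi_\Tt\bu)\|_{L^2(\O)}^2 \leq C \sum_{\sig\in\Ee_{\rm int}}\tau_\sig (D_\sig\bu)^2,
\]
applied with $\bu = r(\bc^n)$. The average of $r(\bc^n)$ need not be bounded a priori, but $r(c)=-\log(1-c)\geq 0$ and, more usefully, $r$ is convex, so by Jensen $\avg_\O(r(c)) \geq r(\avg_\O c)$ — wrong direction. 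Instead I use that mass conservation (Lemma~\ref{lem:mass}) gives $\avg_\O(\pi_\Tt\bc^n) = \ov c \in(0,1)$ fixed, hence there is at least one cell $K_0$ with $c_{K_0}^n \leq \ov c$, so $r(c_{K_0}^n)\leq r(\ov c)=:C_0$; combining this single bounded value with the discrete $H^1$ control via a chain of face jumps (Poincaré through the connected mesh, again using $\zeta^\star$ and $h_\Tt$ bounded, or more cleanly the Poincaré--Wirtinger inequality above plus $|\avg_\O r(\bc^n) - r(c_{K_0}^n)| \leq C(\sum_\sig \tau_\sig(D_\sig r(\bc^n))^2)^{1/2}$) bounds $\|\pi_\Tt r(\bc^n)\|_{L^2(\O)}^2$ by $C(1 + \sum_\sig\tau_\sig(D_\sig r(\bc^n))^2)$. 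Multiplying by $\dt_n$, summing over $n$, and using $\sum_n\dt_n = T$ together with Step~1 yields $\iint_{Q_T}(\pi_{\Tt,\bdt} r(\bc_m))^2 \leq C$. Adding the two contributions proves the lemma, with $C$ depending only on the quantities listed.

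\textbf{Main obstacle.} The delicate point is the face-level comparison in Step~1, namely proving $|D_\sig r(\bc^n)|^2 \leq C\,\Cc_\sig^n |D_\sig h(\bc^n)|^2$ simultaneously for the centered and Sedan face concentrations, and handling the coupling with $\bPhi$ cleanly (the split into $\tfrac12|D_\sig h|^2$ minus $|D_\sig\bPhi|^2$ is lossy and one must make sure the $\bPhi$ remainder is genuinely summable, which is where the uniform-in-$m$ bound $\zeta_{\Tt_m}\geq\zeta^\star$ enters through Lemma~\ref{lem:LinfH1Phi}). One should double-check whether the naive split suffices or whether a more careful absorption — writing $\Dd_\sig^n = \Cc_\sig^n|D_\sig h + D_\sig\bPhi|^2$ and using $\Cc_\sig^n \leq 1$ to get $\Cc_\sig^n|D_\sig h|^2 \leq 2\Dd_\sig^n + 2|D_\sig\bPhi|^2$ directly — is the cleanest route; the latter avoids needing the lower bound on $\Cc_\sig^n$ and is what I would write.
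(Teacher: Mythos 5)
Your overall architecture matches the paper's: the gradient bound is extracted from the time-integrated dissipation estimate (Remark~\ref{rem:D_T2}) plus Lemma~\ref{lem:LinfH1Phi} via the split $\Cc_\sig^n|D_\sig h(\bc^n)|^2\leq 2\Dd_\sig^n+2|D_\sig\bPhi^n|^2$ (your "cleanest route" at the end is exactly what the paper does), and the $L^2$ bound comes from a discrete Poincar\'e--Wirtinger inequality combined with mass conservation. However, both of the points you yourself flag as delicate contain genuine gaps.

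\textbf{Step 1.} The inequality $|D_\sig r(\bc^n)|^2\leq C\,\Cc_\sig^n|D_\sig h(\bc^n)|^2$, equivalently $\big(\wt\Cc_\sig^n\big)^2\leq C\,\Cc_\sig^n$ with $\wt\Cc_\sig^n$ as in \eqref{eq:wtCsig}, cannot follow from the facts you invoke (namely $r'=ch'$ and that $\Cc_\sig^n$ lies between $c_K^n$ and $c_L^n$): the Bessemoulin-Chatard face concentration satisfies the very same betweenness property \eqref{eq:avg}, yet the ratio $\wt\Cc_\sig^n/\Cc_\sig^n$ blows up for it as $(c_K,c_L)\to(1,0)$ (see the remark closing Appendix~\ref{app:alacon}); one checks likewise that $(\wt\Cc_\sig^n)^2/\Cc_\sig^n$ blows up there. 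So your proposed justification would prove too much. The inequality is true for the centered scheme trivially (since $\Cc_\sig^n=(c_K^n+c_L^n)/2\geq\tfrac12\max(c_K^n,c_L^n)$), but for the Sedan scheme it is precisely the content of the paper's Lemma~\ref{lem:cctilde}, whose proof requires the uniform $L^\infty$ bound on the potential (the constant $G$ depends on $M_\Phi$) and a case-by-case asymptotic analysis of $\wt\Cc_\sig^n/\Cc_\sig^n$ near the corners $(1,0)$, $(c^\ast,0)$ and $(0,0)$ of the concentration square. This scheme-specific comparison is the real reason the convergence analysis covers only two of the four schemes; it is missing from your argument.

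\textbf{Step 2.} Your control of the spatial average of $r(\bc^n)$ through a single cell $K_0$ with $c_{K_0}^n\leq\ov c$ does not survive mesh refinement: the best you can get from Poincar\'e--Wirtinger is $|r(c_{K_0}^n)-\avg_\O r(\bc^n)|\leq m_{K_0}^{-1/2}\|\pi_\Tt r(\bc^n)-\avg_\O r(\bc^n)\|_{L^2(\O)}$, and $m_{K_0}^{-1/2}\to\infty$ as $h_{\Tt_m}\to0$ (a chain-of-faces argument suffers the same mesh dependence). The claimed bound $|\avg_\O r(\bc^n)-r(c_{K_0}^n)|\leq C\big(\sum_\sig\tau_\sig(D_\sig r(\bc^n))^2\big)^{1/2}$ with $C$ uniform in $m$ is a point-evaluation estimate that fails in dimension $d\geq 2$. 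The paper's fix is to replace the single cell by a set of uniformly positive measure: setting $c^\ast=(1+\ov c)/2$ and $u=(\pi_\Tt r(\bc^n)-r(c^\ast))^+$, Chebyshev's inequality combined with Lemma~\ref{lem:mass} gives $m(\{u=0\})\geq m(\O)\tfrac{1-\ov c}{1+\ov c}$ independently of the mesh, and then $\int_\O|u-\ov u|^2\geq m(\{u=0\})\,\ov u^2$ turns the Poincar\'e--Wirtinger estimate into a bound on $\ov u$ and hence on $\|u\|_{L^2}$. Your idea of exploiting mass conservation is the right one, but its implementation needs this measure-theoretic device rather than a single distinguished cell.
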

\begin{proof}
We get rid of the subscript $m$ for the ease of reading.
We will split the proof in two parts, first we focus on the proof of:
\be\label{eq:normL2H10_r}
\iint_{Q_T} |\grad_{\Tt, \bdt} r(\bc)|^2\d\x\d t \leq C.
\ee
Thanks to \eqref{eq:norm_L2H1}, we have
$$
\begin{aligned}
\iint_{Q_T} |\grad_{\Tt, \bdt} r(\bc)|^2&=d\sum_{n=1}^N \Delta t_n \sum_{\sigma\in\Ee_{\rm int}} \tau_\sig \vert D_\sig(r(\bc^n))\vert^2,\\
&=d\sum_{n=1}^N \Delta t_n \sum_{\sigma\in\Ee_{\rm int}} \tau_\sig \left(\wt\Cc_\sig^{n}\right)^2
\vert D_\sig(h(\bc^n))\vert^2,
\end{aligned}
$$
where we have defined the mean face concentrations $\left(\wt \Cc_\sig^n\right)_{\sig \in \Ee_{\rm int}, 1 \leq n \leq N}$ by setting
\be\label{eq:wtCsig}
\wt \Cc_\sig^n =
\frac{D_\sig r(\bc^n)}{D_\sig h(\bc^n)} \; \text{if}\; c_K^n \neq c_L^n \quad \text{and}\quad\wt \Cc_\sig^n =c_K^n\; \text{otherwise}, \qquad \forall \sig = K|L.
\ee
As noticed in Appendix~\ref{app:alacon}, $\wt \Cc_\sig^n$ is a mean value of $c_K^n$ and $c_L^n$; so that   $\wt\Cc_\sig^n\in (0,1)$ for all $\sigma\in \Ee_{\rm int}$. Moreover Lemma~\ref{lem:cctilde} proved in Appendix~\ref{app:alacon} ensures that there exists $G>0$ such that 
\be\label{eq:alacon}
\frac{\wt\Cc_\sig^n}{\Cc_\sig^n} \leq G, \qquad \forall \sig \in \Ee_{\rm int}, \; \forall n \in \{1,\dots, N\}.
\ee
Then, thanks to Young inequality, we obtain 
\begin{multline*}
\iint_{Q_T} |\grad_{\Tt, \bdt} r(\bc)|^2\leq 2dG \sum_{n=1}^N \dt_n \sum_{\sig \in \Ee_{\rm int}} \tau_\sig \Cc_\sig^{n} |D_\sig (h(\bc^{n})+\bPhi^n)|^2\\
+ 2d\sum_{n=1}^N \dt_n \sum_{\sig \in \Ee_{\rm int}} \tau_\sig |D_\sig\bPhi^n|^2.
\end{multline*}
Therefore,  Remark~\ref{rem:D_T2} and Lemma~\ref{lem:LinfH1Phi} yield the desired bound \eqref{eq:normL2H10_r}.

We now focus on the proof of:
\be\label{eq:normL2L2_r}
\iint_{Q_T} \left(\pi_{\Tt, \bdt} r(\bc)\right)^2\d\x\d t \leq C.
\ee
Noticing that for $c^*=\frac{1+\bar{c}}{2}>\bar{c}$:
    \[
    r(c)\leq \left( r(c)-r(c^*)\right)^++r(c^*),
    \]
    we have, using $(a+b)^2\leq 2(a^2+b^2)$:
    \be\label{eq:decomposition}
\iint_{Q_T} |\pi_{\Tt, \bdt} r(\bc) |^2 \d\x\d t \leq 2 \iint_{Q_T} |(\pi_{\Tt, \bdt} r(\bc) -r(c^*))^+|^2 \d\x\d t + 2 r(c^*)^2 m(\Omega)T.
\ee
Let $t\in[0,T]$ and $u=(\pi_{\Tt, \bdt} r(\bc) -r(c^*))^+(t)$. We intend to show that we have a $L^2$ bound on $u$ following ideas of 
\cite[Appendix A.1]{Ahmed_M2AN}. As $u$ is nonnegative, we have:
\be\label{eq:firstbound}
\int_\Omega |u-\bar{u}|^2 = \int_{u=0} \bar{u}^2 +\int_{\Omega\setminus \lbrace u=0\rbrace} |u-\bar{u}|^2\geq m(\lbrace u=0\rbrace)\bar{u}^2,
\ee
where $\bar{u}=\oint_\Omega u$.
Using Poincaré-Wirtinger inequality (see \cite[Theorem 5]{BCCHF15} or \cite[Theorem 2.1]{GG10}), we have:
\be\label{eq:PoincareWirt}
\int_\Omega |u-\bar{u}|^2\leq \frac{C}{\zeta_\Tt}\sum_{\sigma\in\cE_{\textup{int}}}\tau_\sigma (D_\sigma u)^2.
\ee
If we had a lower bound on $m(\lbrace u=0\rbrace)$, the equations \eqref{eq:PoincareWirt} and \eqref{eq:firstbound} would yield an upper bound on $\bar{u}$. By definition of $u$ and monotonicity of $r$, $u$ is zero if and only if $c$ is smaller than $c^*$. Using the monotonicity of integration and Lemma \ref{lem:mass}, we have:
\[
c^*(m(\Omega)-m(\lbrace u=0\rbrace))=\int_{c> c^*} c^* \leq \int_\Omega \pi_{\Tt, \bdt} \bc(t) =m(\Omega)\bar{c}.
\]
Hence, as $c^*=(1+{\bar c})/2$,
\[
m(\Omega)\frac{1-\bar{c}}{1+\bar{c}}\leq m(\lbrace u=0\rbrace).
\]
Finally, we have:
\[
\int_\Omega u^2\leq 2\left(\int_\Omega |u-\bar{u}|^2+\bar{u}^2\right)\leq C\sum_{\sigma\in\cE_{\textup{int}}}\tau_\sigma (D_\sigma u)^2.
\]
Using the definition of $u$, we have:
\[
\sum_{\sigma\in\cE_{\textup{int}}}\tau_\sigma (D_\sigma u)^2\leq \int_\Omega |\nabla_{\Tt, \bdt}r(\bc)(t)|^2.
\]
 Hence, integrating in time, and using \ref{eq:decomposition}:
 \[
\iint_{Q_T} |\pi_{\Tt, \bdt} r(\bc) |^2 \d\x\d t \leq C \iint_{Q_T} |\nabla_{\Tt, \bdt}r(\bc)(t)|^2+C.
 \]
We then deduce \eqref{eq:normL2L2_r} from \eqref{eq:normL2H10_r}. It concludes the proof of Lemma \ref{lem:L2H1_xi}.
\end{proof}

\begin{prop}\label{prop:compact}
Let $(\bc_m,\bPhi_m)$ be the family of discrete solutions defined either by the centered scheme or by the Sedan scheme.
In both cases, there exists $c \in L^\infty(Q_T; [0,1])$ with $r(c) \in L^2((0,T);H^1(\O))$ such that, up to a subsequence, 
\be\label{eq:conv_ae}
\pi_{\Tt_m, \bdt_m} \bc_m  \underset{m\to\infty} \longrightarrow c \quad \text{a.e. in $Q_T$},
\ee
\be\label{eq:conv_grad_r}
\grad_{\Tt_m, \bdt_m} r(\bc_m) \underset{m\to\infty} \longrightarrow \grad r(c) \quad \text{weakly in $L^2(Q_T)$}.
\ee
\end{prop}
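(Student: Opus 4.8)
The plan is to combine the uniform estimates of Lemma~\ref{lem:L2H1_xi} with a discrete Aubin--Lions--Simon type compactness argument. First I would obtain the a.e. convergence~\eqref{eq:conv_ae}. The bound of Lemma~\ref{lem:L2H1_xi} gives $\left(\pi_{\Tt_m,\bdt_m} r(\bc_m)\right)_m$ bounded in $L^2((0,T);H^1_{\Tt_m})$ in the discrete sense, so one needs a complementary estimate on the time translates. For this I would test the scheme~\eqref{eq:scheme_c} against a suitable discrete test function and use the flux rewriting~\eqref{eq:FKsig_Csig} together with $0\le c_K^n\le 1$ and the energy-dissipation bound of Remark~\ref{rem:D_T2} and the potential estimate of Lemma~\ref{lem:LinfH1Phi} to control $\sum_n \dt_n \|\pi_\Tt(\bc^n-\bc^{n-1})\|_{(H^1_\Tt)'}$ (or directly a bound on the dual norm of the discrete time derivative of $\pi_{\Tt,\bdt}\bc_m$). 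Applying a discrete Aubin--Simon lemma (e.g. the version of Gallou\"et--Latch\'e, or~\cite{EGH00}-style arguments) to the sequence $\pi_{\Tt_m,\bdt_m}\bc_m$, which is bounded in $L^\infty(Q_T;[0,1])$ and whose ``spatial'' part $r(\pi_{\Tt_m,\bdt_m}\bc_m)$ is compact in $L^2$, yields relative compactness in $L^p(Q_T)$ for all $p<\infty$; extracting a subsequence gives a limit $c$ with $0\le c\le 1$ a.e. and $\pi_{\Tt_m,\bdt_m}\bc_m\to c$ a.e.\ and in every $L^p(Q_T)$, $p<\infty$.

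Next I would identify the weak limit of the reconstructed gradients. From Lemma~\ref{lem:L2H1_xi} the sequence $\grad_{\Tt_m,\bdt_m} r(\bc_m)$ is bounded in $L^2(Q_T)^d$, hence converges weakly (up to a further subsequence) to some $\boldsymbol\chi\in L^2(Q_T)^d$. Since $r$ is continuous and $0\le c\le 1$, the a.e.\ convergence of $\pi_{\Tt_m,\bdt_m}\bc_m$ passes to $\pi_{\Tt_m,\bdt_m} r(\bc_m)\to r(c)$ a.e., and by dominated convergence (the values of $r$ on $[0,1]$ being integrable on $\O\times(0,T)$ after the truncation trick already used in the proof of Lemma~\ref{lem:L2H1_xi}, or directly on $[0,1-\eta]$ and controlling the tail) also in $L^1(Q_T)$, hence in the sense of distributions. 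It remains to check that $\boldsymbol\chi = \grad r(c)$ in $\cD'(Q_T)^d$: this is a now-standard weak-consistency argument for the diamond-cell gradient $\grad_\Tt$ (see~\cite{CHLP03,EG03} and the framework of~\cite{kangourou_2018}). Concretely, for a fixed test field $\bvarphi\in C_c^\infty(Q_T)^d$ one writes $\iint_{Q_T}\grad_{\Tt_m,\bdt_m} r(\bc_m)\cdot\bvarphi$ as a sum over edges, performs a discrete integration by parts to move the difference operator onto $\bvarphi$, and uses the regularity of $\bvarphi$ together with $h_{\Tt_m}\to 0$ and $\zeta_{\Tt_m}\ge\zeta^\star$ to show this converges both to $-\iint_{Q_T} r(c)\,\dive\bvarphi$ (via the $L^1$ convergence of $\pi_{\Tt_m,\bdt_m} r(\bc_m)$) and to $\iint_{Q_T}\boldsymbol\chi\cdot\bvarphi$ (via the weak $L^2$ convergence), forcing $\boldsymbol\chi=\grad r(c)$ and in particular $r(c)\in L^2((0,T);H^1(\O))$.

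I expect the main obstacle to be the time-compactness step, i.e.\ producing a usable estimate on the discrete time derivative of $\pi_{\Tt_m,\bdt_m}\bc_m$ in a negative norm uniformly in $m$: the flux $F_{K\sig}^n = -\tau_\sig\Cc_\sig^n D_{K\sig}(h(\bc^n)+\bPhi^n)$ is controlled in $L^2$ in space-time only through the weighted dissipation $\sum\dt_n\tau_\sig\Cc_\sig^n |D_\sig(h(\bc^n)+\bPhi^n)|^2$, so one must be careful to pair it with test functions whose discrete gradient is controlled by $(\Cc_\sig^n)^{1/2}$-type weights, and to use $\Cc_\sig^n\le 1$ (Lemma~\ref{lem:avg}, valid for the centered and Sedan fluxes) together with the comparison~\eqref{eq:alacon} and Lemma~\ref{lem:LinfH1Phi} to absorb the potential contribution. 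Once this dual estimate is in hand, the remaining steps are routine applications of discrete Aubin--Simon compactness and of the weak consistency of the diamond-cell gradient.
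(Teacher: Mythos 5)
Your plan follows the same architecture as the paper's proof: extract a weak-$\star$ limit $c$ from the uniform bound $0<\pi_{\Tt_m,\bdt_m}\bc_m<1$, upgrade it to a.e.\ convergence via a dual estimate on the discrete time increments of $\bc_m$ obtained by pairing the scheme with discrete test functions and using Cauchy--Schwarz against the dissipation (Remark~\ref{rem:D_T2}) and Lemma~\ref{lem:LinfH1Phi}, then identify the weak $L^2$ limit of $\grad_{\Tt_m,\bdt_m}r(\bc_m)$ by discrete integration by parts against piecewise-constant approximations of smooth test fields. The dual estimate you describe is exactly the inequality \eqref{eq:ACM17} verified in the paper, and your identification of $\boldsymbol\chi=\grad r(c)$ is the same weak-consistency computation.

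The one point that needs repair is the compactness blackbox you invoke. A classical discrete Aubin--Simon lemma (Gallou\"et--Latch\'e, or the \cite{EGH00}-style arguments) does not apply directly here, because the spatial estimate is carried by $r(\bc_m)$ (Lemma~\ref{lem:L2H1_xi}) while the time-increment estimate is carried by $\bc_m$ itself; your phrase ``whose spatial part $r(\pi_{\Tt_m,\bdt_m}\bc_m)$ is compact in $L^2$'' assumes the conclusion, since the $L^2(H^1)$-type bound only controls space translates and says nothing about time translates of $r(\bc_m)$. One must either run an Alt--Luckhaus/Kruzhkov-type compensation argument, pairing the time increments of $c$ (controlled in the dual norm) with the increments of $r(c)$ (controlled by the discrete gradient bound) and exploiting the monotonicity of $r$, or, as the paper does, invoke the nonlinear time-compactness theorem \cite[Theorem 3.9]{ACM17}, which is designed precisely for this mismatch and delivers both the a.e.\ convergence of $\pi_{\Tt_m,\bdt_m}\bc_m$ and the identification of the limit of $\pi_{\Tt_m,\bdt_m}r(\bc_m)$ as $r(c)$ from the two estimates you already have. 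With that substitution your proof closes; the rest is as in the paper.
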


\begin{remark}
The limit $c$ obtained in Proposition \ref{prop:compact} could {\it a priori} depend on the choosen subsequence or be different for the centered scheme and the Sedan scheme. In Section \ref{ssec:identify}, we will identify each limit as a weak solution to the initial problem.
\end{remark}

\begin{proof} 
Since $0 < \pi_{\Tt_m, \bdt_m} \bc_m < 1$ for all $m \geq 1$, there exists $c\in L^\infty(Q_T;[0,1])$ such that $\pi_{\Tt_m, \bdt_m} \bc_m$ 
tends to $c$ in the $L^\infty(Q_T)$ weak-$\star$ sense. We still have to establish the almost everywhere convergence 
as well as the fact that $r(c)$ belongs to $L^2((0,T);H^1(\O))$. To this end, we make use of the blackbox \cite[Theorem 3.9]{ACM17} 
which provides both the almost everywhere convergence and the identification of the limit of $\pi_{\Tt_m,\bdt_m} r(\bc_m)$ as $r(c)$.
We already have Lemma~\ref{lem:L2H1_xi} at hand and $c$ is bounded in $L^\infty$, so that, owing to~\cite{ACM17}, it is sufficient to prove that there exists some $C$ not 
depending on $m$ such that, for all $\bvarphi_m = \left(\varphi_K, \varphi_\sig\right)_{K,\sig} \in \R^{(\Tt_m + \Ee_{{\rm ext},m})\times \bdt_m}$, 
there holds
\be\label{eq:ACM17}
\left| \sum_n\dt_n\sum_{K\in\Tt_m} m_K \frac{c_K^{n} - c_K^{n-1}}{\dt_n} \varphi_K^n \right|\leq C \| \grad_{\Tt_m,\bdt_m} \bvarphi_m \|_{L^\infty(Q_T)}, 
\ee
for having, among other things, the desired convergence~\eqref{eq:conv_ae}. Using \eqref{eq:scheme_c} and the writing~\eqref{eq:FKsig_Csig}
of the fluxes, we obtain that
\begin{align*}
\left|\sum_n\dt_n\sum_{K\in\Tt_m} m_K \frac{c_K^{n} - c_K^{n-1}}{\dt_n} \varphi_K^n\right| =&\;
 \left|\sum_n\dt_n\sum_{\sig \in \Ee_{\rm int}}\tau_\sig \Cc_\sig^n D_{K\sig} (h(\bc^n) + \bPhi^n) D_{K\sig} \bvarphi \right|\\
 &
 \begin{aligned}
 {} \leq \;\left( \sum_n\dt_n\sum_{\sig \in \Ee_{\rm int}}\tau_\sig \Cc_\sig^n \left|D_{\sig} (h(\bc^n) + \bPhi^n)\right|^2\right)^{1/2}\\
 \times\left(\sum_n\dt_n  \sum_{\sig \in \Ee_{\rm int}}\tau_\sig |D_\sig \bvarphi^n|^2\right)^{1/2}
 \end{aligned}\\
 \leq & C \|\grad_{\Tt_m,\bdt_m} \bvarphi_m\|_{L^2(Q_T)} \leq C \|\grad_{\Tt_m,\bdt_m} \bvarphi_m\|_{L^\infty(Q_T)},
\end{align*}
thanks to  the boundedness of the dissipation in Remark \ref{rem:D_T2} which is a consequence of Proposition~\ref{prop:E_disc}.

Since $\grad_{\Tt_m,\bdt_m} r(\bc_m)$ is bounded in $L^2(Q_T)^d$, it converges weakly in $L^2(Q_T)^d$ towards some $\bU$.
Let us show that $\bU= \grad r(c)$. Let $\bV \in C^\infty_c(Q_T;\R^d)$ be a smooth compactly supported vector field, we set
\[
\bV_\sig^n = \frac{1}{m_\sig}\int_\sig \bV(t_n,\bgamma)\d\bgamma, \qquad \forall \sig \in \Ee, \; \forall n \in \{1,\dots, N\}.
\]
It permits to define a  piecewise constant function on the diamond cells and the time intervals: 
$$\bV_{\Ee,\bdt}(t,\x) = \bV_\sig^n\mbox{ if } (t,\x) \in (t_{n-1},t_n]\times\Delta_\sig.$$ 
It is clear that 
$\bV_{\Ee_m,\bdt_m}$ converges uniformly towards $\bV$ as $m \to \infty$ thanks to the regularity of $\bV$.
Then 
\[
\iint_{Q_T}\grad_{\Tt_m,\bdt_m} r(\bc) \cdot \bV_{\Ee_m,\bdt_m}\d\x\d t \underset{m\to \infty}\longrightarrow 
\iint_{Q_T} \bU\cdot\bV \d\x\d t.
\]
But, using the geometric relation $d m_{\Delta_\sig} = d_\sig m_\sig$ and the definition of $\bV_\sig^n$, one has 
\begin{multline*}
\iint_{Q_T}\grad_{\Tt,\bdt} r(\bc) \cdot \bV_{\Ee,\bdt}\d\x\d t 
= 
\sum_{n=1}^N \dt_n \sum_{\sig \in \Ee} d m_{\Delta_\sig} \frac{r(c_{K\sig}^n)-r(c_K^n)}{d_\sig} \n_{K\sig} \cdot \bV_\sig^n \\
=   - \sum_{n=1}^N \dt_n r(c_K^n) \sum_{\sig \in \Ee_K} \int_\sig \bV(\bgamma,t_n) \cdot \n_{K\sig} \d\bgamma 
=  - \sum_{n=1}^N \dt_n r(c_K^n) \int_K \div(\bV(\x,t_n)) \d\x. 
\end{multline*}
We deduce from the convergence of $\pi_{\Tt_m,\bdt_m}r(\bc)$ towards $r(c)$ that 
\[
\iint_{Q_T}\grad_{\Tt_m,\bdt_m} r(\bc) \cdot \bV_{\Ee_m,\bdt_m}\d\x\d t  \underset{m\to \infty}\longrightarrow  - \iint_{Q_T} r(c) \div(\bV) \d\x\d t 
=  \iint_{Q_T}\grad r(c) \cdot \bV \d\x\d t, 
\]
so that $\bU = \grad r(c)$. 
\end{proof}

We have two kind of face values at hand : $\Bigl(\Cc^{n}_\sig\Bigr)_{\sig \in \Ee_{\textup{int}}, 1 \leq n \leq N}$ and $\left(\wt\Cc^{n}_\sig \right)_{\sig \in \Ee_{\textup{int}}, 1 \leq n \leq N}$ defined respectively by \eqref{eq:Csig} and \eqref{eq:wtCsig}. 
Based on this, we can reconstruct 
two approximate concentration profiles $c_{\Ee,\bdt}$ and $\wt c_{\Ee,\bdt}$ that are piecewise constant on the diamond cells by setting 
\be\label{eq:c_Ee}
c_{\Ee,\bdt}(t,\x) = \begin{cases}
\Cc_\sig^{n} & \text{if}\;(t,\x) \in (t_{n-1},t_n] \times \Delta_\sig, \quad \sig \in \Ee_{{\rm int}}, \\
c_K^n & \text{if}\;\x \in \Delta_\sig, \quad \sig \in \Ee_{{\rm ext}}\cap \Ee_{K},
\end{cases}
\ee
and 
\be\label{eq:wt_c_Ee}
\wt c_{\Ee,\bdt}(t,\x) = \begin{cases}
\wt \Cc_\sig^{n} & \text{if}\;(t,\x) \in (t_{n-1},t_n] \times \Delta_\sig, \quad \sig \in \Ee_{{\rm int}}, \\
c_K^n & \text{if}\;\x \in \Delta_\sig, \quad \sig \in \Ee_{{\rm ext}}\cap \Ee_{K}.
\end{cases}
\ee

\begin{lemma}\label{lem:c_Ee}
For the centered scheme and the Sedan scheme,  there holds 
\begin{align}\label{eq:conv_c_Ee}
c_{\Ee_m,\bdt_m} \underset{m\to\infty}\longrightarrow c &\quad \text{in}\; L^p(Q_T)\; \text{for all $p\in[1,\infty)$},\\
\wt c_{\Ee_m,\bdt_m}\underset{m\to\infty}\longrightarrow c &\quad \text{in}\; L^p(Q_T)\; \text{for all $p\in[1,\infty)$},
\label{eq:conv_wt_c_Ee}
\end{align}
where $c$ is as in Proposition~\ref{prop:compact}.
\end{lemma}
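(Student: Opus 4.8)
The strategy is to deduce the strong $L^p(Q_T)$ convergence of both face-concentration reconstructions from the a.e. convergence $\pi_{\Tt_m,\bdt_m}\bc_m \to c$ established in Proposition~\ref{prop:compact}, together with the uniform bound $0 < c_{\Ee_m,\bdt_m}, \wt c_{\Ee_m,\bdt_m} < 1$ (so that, by dominated convergence, it suffices to prove convergence in $L^1(Q_T)$, or even just convergence in measure). The key observation is that, by Lemma~\ref{lem:avg}, for the centered and the Sedan scheme the face concentration $\Cc_\sig^n$ lies between $\min(c_K^n,c_L^n)$ and $\max(c_K^n,c_L^n)$; and by the remark following~\eqref{eq:wtCsig} (proved in Appendix~\ref{app:alacon}), the same is true of $\wt\Cc_\sig^n$. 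Hence on each interior diamond cell $\Delta_\sig$ with $\sig = K|L$ we control the deviation of $c_{\Ee,\bdt}$ (or $\wt c_{\Ee,\bdt}$) from, say, $c_K^n$ by the jump $D_\sig \bc^n = |c_K^n - c_L^n|$.

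\textbf{Main steps.} First I would write, for any fixed $p$, the quantity $\iint_{Q_T} |c_{\Ee_m,\bdt_m} - \pi_{\Tt_m,\bdt_m}\bc_m|^p$ as a sum over time steps and diamond cells; the exterior diamonds contribute nothing since there $c_{\Ee,\bdt} = c_K^n = \pi_\Tt\bc^n$. On an interior diamond $\Delta_\sig$, using $m_{\Delta_\sig} = \tfrac1d m_\sig d_\sig$ and the bound $|c_{\Ee,\bdt} - c_K^n| \le D_\sig\bc^n$ coming from Lemma~\ref{lem:avg}, the contribution is bounded (up to a constant depending on $d$ and on the boundedness $|c|\le 1$, which lets me replace the $p$-th power by the first power) by $\tfrac1d \dt_n \tau_\sig d_\sig^2 (D_\sig\bc^n)^2$. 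Second, I would use $|r(c_K^n) - r(c_L^n)| \ge \wt\Cc_\sig^n D_\sig h(\bc^n) \ge $ a constant times $D_\sig\bc^n$ on the relevant range — more simply, since $r$ is Lipschitz on $[0,1]$ away from $1$ is false, I instead bound $D_\sig\bc^n$ directly: because $\wt\Cc_\sig^n \in (0,1)$ and $r' = ch'$, one has $(D_\sig\bc^n)^2 \le C\, \tau_\sig^{-1}$-free control... The cleaner route is $\tau_\sig d_\sig^2 (D_\sig \bc^n)^2 = d_\sig^2 m_\sig d_\sig^{-1} (D_\sig\bc^n)^2 = m_\sig d_\sig (D_\sig\bc^n)^2 \le h_{\Tt} \cdot m_\sig d_\sig^{-1}(D_\sig\bc^n)^2 \cdot (d_\sig^2/d_\sig^2)$; using that $\bc \mapsto r(\bc)$ makes $|D_\sig\bc^n| \le C |D_\sig r(\bc^n)|$ fails near $c=1$, so instead I bound the jump of $\bc$ by the jump of $r(\bc)$ only where $\bc$ stays away from $1$ and use the $L^2H^1$ estimate on $r(\bc)$ of Lemma~\ref{lem:L2H1_xi}; since $r$ is continuous and increasing with $r(0)=0$, $\sum_n\dt_n\sum_{\sig\in\Ee_{\rm int}}\tau_\sig(D_\sig\bc^n)^2$ need not be bounded, but $\sum_n\dt_n\sum_\sig \tau_\sig (D_\sig r(\bc^n))^2 \le C$ by Lemma~\ref{lem:L2H1_xi}. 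So I would rather write $|c_{\Ee,\bdt} - c_K^n| = $ something controlled by $|r(c_{\Ee,\bdt}) - r(c_K^n)|$ via a local modulus-of-continuity argument, or simply prove convergence in measure.

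\textbf{Convergence in measure (the efficient route).} Fix $\eta > 0$; I want to show $\mathrm{meas}\{ (t,\x) : |c_{\Ee_m,\bdt_m} - c| > \eta \} \to 0$, and likewise for $\wt c_{\Ee_m,\bdt_m}$, which together with the uniform $L^\infty$ bound gives $L^p$ convergence for every finite $p$ by Vitali. On an interior diamond $\Delta_\sig$ the value $c_{\Ee,\bdt}$ lies between $c_K^n$ and $c_L^n$; the value of $c$ is close to $c_K^n$ on most of $K$ and to $c_L^n$ on most of $L$ by the a.e.\ (hence in-measure) convergence of $\pi_{\Tt_m,\bdt_m}\bc_m$. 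The only obstruction to $|c_{\Ee,\bdt} - c| \le \eta$ is then the set of diamonds on which the oscillation $D_\sig \bc^n$ itself exceeds $\eta$; the total measure of those diamonds is controlled by $\sum_n\dt_n\sum_{\sig\in\Ee_{\rm int}} m_{\Delta_\sig} \mathbf 1_{\{D_\sig\bc^n > \eta\}}$, which, after passing through $r$ (monotone, uniformly continuous on $[0,1]$, so $D_\sig\bc^n > \eta$ forces $D_\sig r(\bc^n) > \omega(\eta) > 0$ for some modulus $\omega$), is bounded via Markov's inequality by $\omega(\eta)^{-2}\sum_n\dt_n\sum_\sig m_{\Delta_\sig} (D_\sig r(\bc^n))^2 \le C d^{-1} \omega(\eta)^{-2} h_{\Tt_m}^2 \sum_n\dt_n\sum_\sig \tau_\sig (D_\sig r(\bc^n))^2 \le C\, \omega(\eta)^{-2}\, h_{\Tt_m}^2 \to 0$, using $m_{\Delta_\sig} = d^{-1}m_\sig d_\sig \le d^{-1}h_{\Tt_m}^2 \tau_\sig$ and Lemma~\ref{lem:L2H1_xi}. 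Combining the two effects (good-diamond estimate via $\pi_{\Tt_m,\bdt_m}\bc_m \to c$ in measure, bad-diamond estimate via the $h_{\Tt_m}^2$ gain) gives convergence in measure, hence~\eqref{eq:conv_c_Ee}. The argument for $\wt c_{\Ee_m,\bdt_m}$ is \emph{verbatim} the same, since $\wt\Cc_\sig^n$ is also squeezed between $c_K^n$ and $c_L^n$.

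\textbf{Main obstacle.} The delicate point is handling the possible blow-up of $h$ and of $\bc \mapsto D_\sig\bc$ control near $c = 1$: one cannot bound $D_\sig\bc^n$ by $D_\sig r(\bc^n)$ uniformly, because $r$ has infinite slope at $1$. This is precisely why I route the "bad diamond" estimate through the \emph{monotone} function $r$ and a modulus of continuity $\omega$ rather than through a Lipschitz bound, and why the quantitative gain is the factor $h_{\Tt_m}^2$ (from $m_{\Delta_\sig}\le d^{-1}h_{\Tt_m}^2\tau_\sig$) multiplying the mesh-uniform bound of Lemma~\ref{lem:L2H1_xi}. Everything else is the standard squeeze-plus-Vitali packaging.
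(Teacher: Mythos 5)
Your final argument (convergence in measure plus Vitali) does reach the conclusion, but it takes a detour around an obstacle that does not exist, and the detour is justified by two incorrect statements along the way. The claim in your ``Main obstacle'' paragraph --- that one cannot bound $D_\sig \bc^n$ by $D_\sig r(\bc^n)$ uniformly because $r$ has infinite slope at $1$ --- has the inequality backwards. Since $r(c)=-\log(1-c)$ satisfies $r'(c)=\frac1{1-c}\geq 1$ on $(0,1)$, one has $|a-b|\leq |r(a)-r(b)|$ for all $a,b\in(0,1)$; the infinite slope at $1$ only obstructs the \emph{reverse} (Lipschitz) bound, which is not needed. Consequently your assertion that $\sum_n\dt_n\sum_{\sig\in\Ee_{\rm int}}\tau_\sig (D_\sig\bc^n)^2$ ``need not be bounded'' is false: it is bounded by $\sum_n\dt_n\sum_{\sig\in\Ee_{\rm int}}\tau_\sig (D_\sig r(\bc^n))^2\leq C$ from Lemma~\ref{lem:L2H1_xi}. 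This is exactly what the paper uses: after the squeeze $|c_K^n-\Cc_\sig^n|+|c_L^n-\Cc_\sig^n|=D_\sig\bc^n$ from Lemma~\ref{lem:avg} and the geometric bound $m_{\Delta_{K\sig}}\leq \frac{h_\Tt}{d}m_\sig$, a single Cauchy--Schwarz gives the quantitative estimate $\|c_{\Ee,\bdt}-c_{\Tt,\bdt}\|_{L^1(Q_T)}\leq C h_\Tt$, and the $L^p$ convergence follows from the a.e.\ convergence of $\pi_{\Tt,\bdt}\bc$ and the uniform bound $0\leq c_{\Ee,\bdt}\leq 1$. Your abandoned ``Main steps'' paragraph was in fact on the right track.

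Two further points on the in-measure route you actually carry out. First, the justification ``$r$ is uniformly continuous on $[0,1]$, so $D_\sig\bc^n>\eta$ forces $D_\sig r(\bc^n)>\omega(\eta)$'' is doubly off: $r$ blows up at $1$ so it is not uniformly continuous on $[0,1]$, and uniform continuity would anyway give the wrong implication (small increments of $c$ give small increments of $r$, not the converse). What makes your $\omega(\eta)>0$ work is, once again, $r'\geq 1$, which gives $\omega(\eta)=\eta$. Second, your good-diamond/bad-diamond bookkeeping and the estimate $m_{\Delta_\sig}\leq C h_{\Tt}^2\tau_\sig$ are correct, so the argument closes --- it is just a longer path to a statement the paper obtains in three lines.
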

\begin{proof}
We only prove~\eqref{eq:conv_c_Ee} since the proof of~\eqref{eq:conv_wt_c_Ee} is similar.
Here again, we get rid of  $m$ for clarity. 
Since $c_{\Tt,\bdt}$ converges almost everywhere to $c$ and remains bounded between $0$ and $1$, it converges in $L^p(Q_T)$.
$c_{\Ee,\bdt}$ is also uniformly bounded, hence it suffices to show that $\|c_{\Ee,\bdt}-c_{\Tt,\bdt}\|_{L^1(Q_T)}$ tends to $0$. 
Denoting by $\Delta_{K\sig}$ the half-diamond cell which is defined as the interior of the convex hull of $\{\x_K,\sig\}$ for $K\in\Tt$ 
and $\sig \in \Ee_K$, one has 
\begin{align*}
\|c_{\Ee,\bdt}-c_{\Tt,\bdt}\|_{L^1(Q_T)} \leq & \sum_{n=1}^N \dt_n \sum_{K\in\Tt} \sum_{\sig \in \Ee_K} m_{\Delta_{K\sig}}|c_K^n - \Cc_\sig^n| \\
\leq & \frac{h_\Tt}{d}  \sum_{n=1}^N \dt_n \sum_{K\in\Tt} \sum_{\sig \in \Ee_K} m_\sig|c_K^n - \Cc_\sig^n|
\end{align*}
where we have used the geometric relation $m(\Delta_{K\sig}) = \frac1d m_\sig {\rm dist}(\x_K,\sig) \leq \frac{h_\Tt}d m_\sig$.
For the internal faces, Lemma~\ref{lem:avg} (use~\eqref{eq:wt_avg} instead for $\wt\Cc_\sig^n$) implies that 
\[
|c_K^n - \Cc_\sig^n| + |c_{L}^n - \Cc_\sig^n| = |c_K^n - c_{L}^n|, \qquad \forall \sig = K|L.
\]
Therefore, we obtain that 
\begin{align*}
\|c_{\Ee,\bdt}-c_{\Tt,\bdt}\|_{L^1(Q_T)} \leq& \frac{h_\Tt}d\sum_{n=1}^N \dt_n \sum_{\sig \in \Ee_{\rm int}} m_\sig D_\sig \bc^n\\
\leq &\frac{h_\Tt}d \left(\sum_{n=1}^N \dt_n \sum_{\sig \in \Ee_{\rm int}} m_\sig d_\sig\right)^{1/2}
\left(\sum_{n=1}^N \dt_n \sum_{\sig \in \Ee_{\rm int}} \tau_\sig |D_\sig \bc^n|^2\right)^{1/2}.
\end{align*}
Since $|r(a) - r(b)| > |a-b|$ for all $a,b \in (0,1)$, we deduce from Lemma~\ref{lem:L2H1_xi} that 
\[
\|c_{\Ee,\bdt}-c_{\Tt,\bdt}\|_{L^1(Q_T)} \leq C h_\Tt.
\]
\end{proof}

% ============================================================
\subsection{Convergence towards a weak solution}\label{ssec:identify}

\begin{prop}\label{prop:convPhi}
Let $c$ be as in Proposition~\ref{prop:compact} and let $\Phi \in L^\infty(Q_T) \cap L^\infty((0,T);H^1(\O))$ 
be the solution to
the Poisson equation~\eqref{Poisson equation} with boundary conditions~\eqref{eq:BC_Phi}. Then, for the centered scheme and the Sedan scheme, there holds 
\be\label{eq:convPhi1}
\pi_{\Tt_m, \bdt_m} \bPhi_m \underset{m\to\infty} \longrightarrow \Phi\; \text{in the $L^\infty(Q_T)$ weak-$\star$ sense}, 
\ee
and
\be\label{eq:convPhi2}
\grad_{\Tt_m, \bdt_m} \bPhi_m \underset{m\to\infty} \longrightarrow \grad \Phi \quad \text{in the $L^\infty((0,T);L^2(\O)^d)$ weak-$\star$ sense}.
\ee
\end{prop}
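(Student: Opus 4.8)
The plan is to combine the uniform estimates of Section~\ref{ssec:apriori} with the now-standard machinery for TPFA discretizations of elliptic problems. First I would extract weak-$\star$ limits: Lemma~\ref{lem:LinfPhi} bounds $\pi_{\Tt_m,\bdt_m}\bPhi_m$ in $L^\infty(Q_T)$ by $M_\Phi$, and Lemma~\ref{lem:LinfH1Phi} together with~\eqref{eq:norm_L2H1} bounds $\grad_{\Tt_m,\bdt_m}\bPhi_m$ in $L^\infty((0,T);L^2(\O)^d)$, uniformly in $m$. Since these spaces are the duals of $L^1(Q_T)$ and of $L^1((0,T);L^2(\O)^d)$ respectively, up to a subsequence $\pi_{\Tt_m,\bdt_m}\bPhi_m\to\Phi_\infty$ in the $L^\infty(Q_T)$ weak-$\star$ sense and $\grad_{\Tt_m,\bdt_m}\bPhi_m\to\bG$ in the $L^\infty((0,T);L^2(\O)^d)$ weak-$\star$ sense, for some $\Phi_\infty$ and $\bG$. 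It then remains to prove $\bG=\grad\Phi_\infty$, that $\Phi_\infty-\Phi^D\in L^\infty((0,T);\cH_{\Gamma^D})$, and that $\Phi_\infty$ solves the Poisson equation; since the latter's weak solution is unique given $c$, this forces $\Phi_\infty=\Phi$ and upgrades the convergence to the whole sequence.

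For the identification $\bG=\grad\Phi_\infty$ I would reproduce the computation at the end of the proof of Proposition~\ref{prop:compact}: for $\bV\in C^\infty_c(Q_T;\R^d)$, with $\bV_\sig^n=\frac1{m_\sig}\int_\sig\bV(t_n,\cdot)\d\bgamma$ and $\bV_{\Ee,\bdt}=\bV_\sig^n$ on $(t_{n-1},t_n]\times\Delta_\sig$, the relation $d\,m_{\Delta_\sig}=d_\sig m_\sig$ and a discrete integration by parts give
\[
\iint_{Q_T}\grad_{\Tt,\bdt}\bPhi_m\cdot\bV_{\Ee,\bdt}\,\d\x\d t = -\sum_{n=1}^{N}\dt_n\sum_{K\in\Tt}\Phi_K^n\int_K\div\bV(\x,t_n)\,\d\x,
\]
and letting $m\to\infty$ (left side by the weak-$\star$ convergence of $\grad_{\Tt,\bdt}\bPhi_m$ against the uniformly convergent $\bV_{\Ee,\bdt}$, right side by that of $\pi_{\Tt,\bdt}\bPhi_m$) yields $\iint_{Q_T}\bG\cdot\bV=-\iint_{Q_T}\Phi_\infty\div\bV$, i.e. $\bG=\grad\Phi_\infty$ in $\mathcal D'(Q_T)$, so $\Phi_\infty\in L^\infty((0,T);H^1(\O))$. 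To recover the boundary conditions I would run the same argument with vector fields that need not vanish near $\p\O$, using that the Dirichlet data enters the scheme only through the face values $\Phi_\sig^D$ and that the homogeneous Neumann condition is built into the TPFA closure; as in~\cite{EGH00}, this shows $\Phi_\infty-\Phi^D\in L^\infty((0,T);\cH_{\Gamma^D})$.

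To pass to the limit in~\eqref{eq:scheme_Phi}, I would fix $\psi\in\cH_{\Gamma^D}\cap C^\infty(\ov\O)$ and $\theta\in C^\infty_c((0,T))$, set $\psi_K=\psi(\x_K)$ for $K\in\Tt$ and $\psi_\sig=\psi(\x_\sig)$ for $\sig\in\Ee_{\rm ext}$ (so $\psi_\sig=0$ on $\Ee^D$), multiply~\eqref{eq:scheme_Phi} by $\dt_n\theta(t_n)\psi_K$, sum over $K$ and $n$, and use a discrete integration by parts with~\eqref{eq:norm_L2H1} to obtain
\[
\frac1d\iint_{Q_T}\theta_\bdt\,\grad_{\Tt,\bdt}\bPhi_m\cdot\grad_{\Tt,\bdt}\bpsi\,\d\x\d t = \iint_{Q_T}\theta_\bdt\,\bigl(\pi_{\Tt,\bdt}\bc_m+\pi_\Tt\bc^{dp}\bigr)\,\pi_\Tt\bpsi\,\d\x\d t,
\]
where $\theta_\bdt(t)=\theta(t_n)$ on $(t_{n-1},t_n]$ converges uniformly to $\theta$. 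On the left, $\theta_\bdt\grad_{\Tt,\bdt}\bpsi\to\theta\grad\psi$ strongly in $L^2(Q_T)^d$ by consistency of the diamond-cell gradient for smooth functions (cf.~\cite{CHLP03,EG03,EGH00}), so pairing against the weak-$\star$ limit from the first step passes to the limit; on the right, $\pi_\Tt\bpsi\to\psi$ uniformly, $\pi_\Tt\bc^{dp}\to c^{dp}$ in $L^2(\O)$, and $\pi_{\Tt,\bdt}\bc_m\to c$ a.e. and boundedly (Proposition~\ref{prop:compact}), hence in $L^2(Q_T)$. The limit identity is~\eqref{eq:weak_Phi} integrated against $\theta$; since $\theta$ is arbitrary it holds for a.e.\ $t$, and by density in $\cH_{\Gamma^D}$ for every admissible $\psi$. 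Together with the gradient-identification step this shows $\Phi_\infty(t,\cdot)$ is the unique weak solution of the Poisson problem with source $c(t,\cdot)+c^{dp}$, i.e.\ $\Phi_\infty=\Phi$; as the limit is uniquely determined, \eqref{eq:convPhi1}--\eqref{eq:convPhi2} hold for the whole sequence.

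The main obstacle, as usual with mixed boundary conditions, will be the second step: showing that the $L^\infty$ weak-$\star$ limit $\Phi_\infty$ genuinely carries the Dirichlet trace $\Phi^D$ on $\Gamma_D$ (not merely that $\Phi_\infty\in H^1(\O)$), and bookkeeping the fact that the natural topology for the potentials is the $L^\infty((0,T);\cdot)$ weak-$\star$ one rather than the $L^2(Q_T)$ one used for the concentration. Both points are handled by the classical TPFA convergence arguments of~\cite{EGH00}, but they demand care in the choice of test functions and in keeping the time variable parametric throughout.
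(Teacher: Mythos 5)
Your overall architecture coincides with the paper's: extract the weak-$\star$ limits from Lemmas~\ref{lem:LinfPhi} and~\ref{lem:LinfH1Phi} together with~\eqref{eq:norm_L2H1}, identify the limit of the discrete gradients by duality against smooth vector fields exactly as at the end of the proof of Proposition~\ref{prop:compact}, pass to the limit in~\eqref{eq:scheme_Phi}, and defer the recovery of the Dirichlet trace to the classical TPFA literature (the paper invokes \cite{BCH13} where you invoke \cite{EGH00}).

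There is, however, one genuine gap in your limit passage for the Poisson equation: you assert that $\theta_\bdt\,\grad_{\Tt,\bdt}\bpsi\to\theta\grad\psi$ strongly in $L^2(Q_T)^d$ ``by consistency of the diamond-cell gradient for smooth functions''. This is false for $d\geq 2$; the paper explicitly states in Section~\ref{ssec:reconstruct} that $\grad_\Tt$ is \emph{merely weakly consistent} unless $d=1$. Indeed, on a diamond $\Delta_\sig$ one has, up to sign, $\grad_\Tt\bpsi\approx d\,(\grad\psi\cdot\n_{K\sig})\,\n_{K\sig}$, i.e.\ $d$ times the orthogonal projection of $\grad\psi$ onto the mesh-dependent, oscillating direction $\n_{K\sig}$; this vanishes on diamonds where $\grad\psi\perp\n_{K\sig}$ and therefore cannot converge strongly to $\grad\psi$. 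A symptom that the step is wrong is the constant: if your claimed strong convergence held, the left-hand side of your identity
\[
\frac1d\iint_{Q_T}\theta_\bdt\,\grad_{\Tt,\bdt}\bPhi_m\cdot\grad_{\Tt,\bdt}\bpsi\,\d\x\d t
=\iint_{Q_T}\theta_\bdt\,\bigl(\pi_{\Tt,\bdt}\bc_m+\pi_\Tt\bc^{dp}\bigr)\,\pi_\Tt\bpsi\,\d\x\d t
\]
would tend to $\frac1d\iint_{Q_T}\theta\,\grad\Phi\cdot\grad\psi\,\d\x\d t$, with a spurious factor $1/d$. Since both $\grad_{\Tt,\bdt}\bPhi_m$ and $\grad_{\Tt,\bdt}\bpsi$ converge only weakly, you cannot pass to the limit in their product. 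The remedy, which is what the paper does, is to rewrite the discrete bilinear form with the \emph{strongly consistent} dual gradient $\wh\grad_\Tt$ of \cite{DE_FVCA8}, namely $\sum_{\sig\in\Ee}\tau_\sig D_{K\sig}\bPhi^n D_{K\sig}\bpsi=\int_\O\grad_\Tt\bPhi^n\cdot\wh\grad_\Tt\bpsi\,\d\x$ with $\wh\grad_\Tt\bpsi\to\grad\psi$ uniformly, and then pair the weakly convergent factor with the strongly convergent one. With that single replacement the rest of your argument goes through as in the paper.
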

\begin{proof}
The existence of some $\Phi \in L^\infty(Q_T)$ such that~\eqref{eq:convPhi1} holds is a straightforward consequence of Lemma~\ref{lem:LinfPhi}, 
whereas the existence of some $\bU \in L^\infty((0,T);L^2(\O)^d)$ such that 
$\grad_{\Tt_m, \bdt_m} \bPhi$ tends to $\bU$ as $m$ tends to $\infty$ follows from Lemma~\ref{lem:LinfH1Phi} 
together with \eqref{eq:norm_L2H1}. The proof of $\bU=\grad \Phi$ is similar to the proof of Proposition~\ref{prop:compact}.

We show now that $\Phi$ satisfies the Poisson equation~\eqref{Poisson equation}.
Let $\psi \in C^\infty_c([0,T]\times\left\{\O\cup\Gamma^N\right\})$, then define $\psi_K^n=\psi(\x_K,t_n)$ and $\psi_\sig^n = \psi(\x_\sig,t_n)$ for 
$1\leq n\leq N$, $K\in\Tt$ and $\sig\in\Ee_{\rm ext}$. Following~\cite{DE_FVCA8} (see~\cite{CVV99} for a practical example), 
one can reconstruct a second approximate gradient operator $\wh \grad_{\Tt}: \R^\Tt \to L^\infty(\O)^d$ such that 
\[
\int_{\O}\grad_\Tt \bu \cdot \wh\grad_\Tt \bv \d\x = \sum_{\sig \in \Ee} \tau_\sig D_{K\sig} \bu D_{K\sig} \bv, \qquad \forall \bu, \bv \in \R^\Tt, 
\]
and which is strongly consistent, i.e., 
\be\label{eq:ov_grad}
\wh \grad_\Tt \bpsi^n \underset{h_\Tt\to0}\longrightarrow \grad \psi(\cdot, t_n)\;\;\text{uniformly in}\; \ov\O, \quad \forall n \in \{1,\dots, N\},
\ee
thanks to the smoothness of $\psi$. The scheme~\eqref{eq:scheme_Phi} then reduces to 
\[
\int_\O \grad_\Tt \bPhi^n \cdot \wh\grad_\Tt \bpsi^n \d\x= \int_\O \pi_\Tt(\bc^n  + c^{dp}) \pi_\Tt \bpsi^n \d\x, \quad \forall n \in \{1,\dots, N\}, \;
\forall \bpsi \in \R^{(\Tt+\Ee_{\rm ext})\times N}.
\]
Integrating with respect to time over $(0,T)$ and passing to the limit $h_\Tt, \ov \bdt \to 0$ thanks to Proposition~\ref{prop:compact}, \eqref{eq:convPhi2} and \eqref{eq:ov_grad} then yields 
\[
\iint_{Q_T} \grad \Phi \cdot \grad \psi \d\x\d t = \iint_{Q_T} (c + c^{dp}) \psi \d\x\d t, \qquad \forall \psi \in C^\infty_c([0,T]\times \O\cup\Gamma^N).
\]
In particular, \eqref{eq:weak_Phi} holds for almost every $t\in(0,T)$.

The last point to be checked is the boundary condition for $\Phi$, i.e., that $\Phi = \Phi^D$ on $(0,T) \times\Gamma^D$. 
This can be proved for instance following the lines of \cite[Section 4]{BCH13}. 
\end{proof}

\begin{remark}[Enhanced convergence properties]\label{rem:convPhi}
The convergence described in Proposition~\ref{prop:convPhi} is not optimal. One can rather easily show that the convergence 
of $\pi_{\Tt_m,\bdt_m} \bPhi_m$ towards $\Phi$ for the strong topology of $L^2(Q_T)$ due to the strong convergence of 
$\pi_{\Tt_m, \bdt_m} \bc_m$ to $c$ established in Proposition~\ref{prop:compact}. Moreover, 
one can establish the strong convergence of $\wh \grad_{\Tt_m,\bdt_m} \bPhi_m$ 
towards $\grad \Phi$, where the gradient reconstruction operator $\wh \grad_{\Tt_m,\bdt_m}$ is the extension to the time-space domain 
$Q_T$ of the operator $\wh \grad_{\Tt_m}$ used in the proof of Proposition~\ref{prop:convPhi}.  We refer to~\cite{DE_FVCA8} for 
details on these enhanced convergence properties.
\end{remark}

\begin{prop}\label{prop:conv_c}
Let $c$ be as in Proposition~\ref{prop:compact}, then $c$ satisfies the weak formulation~\eqref{eq:weak_c}.
\end{prop}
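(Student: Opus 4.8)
The plan is to pass to the limit in the discrete weak formulation obtained by multiplying the scheme~\eqref{eq:scheme_c} by test function values and summing. Fix $\varphi \in C^\infty_c([0,T)\times\ov\O)$, and set $\varphi_K^n = \varphi(\x_K, t_n)$ for $K\in\Tt$, $\sig\in\Ee_{\rm ext}$, $0\le n\le N$, together with face values $\varphi_\sig^n = \varphi(\x_\sig,t_n)$. Multiplying~\eqref{eq:scheme_c} by $\dt_n\varphi_K^{n-1}$ and summing over $K\in\Tt$ and $n$, a discrete integration by parts (Abel summation in time, and the conservativity~\eqref{eq:cons_F} together with the no-flux condition~\eqref{eq:noflux_bc} in space) turns the accumulation term into
\[
-\sum_{n=1}^N \sum_{K\in\Tt} m_K c_K^{n}\,(\varphi_K^{n} - \varphi_K^{n-1}) - \sum_{K\in\Tt} m_K c_K^0 \varphi_K^0,
\]
and the flux term into $\sum_n\dt_n\sum_{\sig\in\Ee_{\rm int}} F_{K\sig}^n D_{K\sig}\bvarphi^{n-1}$. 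Using the writing~\eqref{eq:FKsig_Csig} of the flux and the splitting $\Cc_\sig^n D_{K\sig}(h(\bc^n)+\bPhi^n) = \wt\Cc_\sig^n D_{K\sig}h(\bc^n) + \Cc_\sig^n D_{K\sig}\bPhi^n$ is not quite what we want; rather we want to recognize $r(c)$, so I would instead use that $\Cc_\sig^n D_{K\sig} h(\bc^n)$ relates to $\wt\Cc_\sig^n D_{K\sig}h(\bc^n) = D_{K\sig} r(\bc^n)$ only up to the ratio $\wt\Cc_\sig^n/\Cc_\sig^n$. The cleaner route is to write the flux term as
\[
\sum_n\dt_n\sum_{\sig\in\Ee_{\rm int}}\tau_\sig\Bigl( \wt\Cc_\sig^n D_{K\sig}h(\bc^n) + \Cc_\sig^n D_{K\sig}\bPhi^n\Bigr)D_{K\sig}\bvarphi^{n-1} + \sum_n\dt_n\sum_{\sig}\tau_\sig(\Cc_\sig^n-\wt\Cc_\sig^n)D_{K\sig}h(\bc^n)\,D_{K\sig}\bvarphi^{n-1},
\]
recognizing $\tau_\sig \wt\Cc_\sig^n D_{K\sig}h(\bc^n) = \tau_\sig D_{K\sig}r(\bc^n)$ in the first bracket, so the leading part becomes $\frac1d\iint_{Q_T}(\grad_{\Tt,\bdt}r(\bc) + c_{\Ee,\bdt}\grad_{\Tt,\bdt}\bPhi)\cdot\grad_{\Tt,\bdt}\bvarphi$ after using~\eqref{eq:norm_L2H1}, with $c_{\Ee,\bdt}$ the face reconstruction~\eqref{eq:c_Ee}.

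Next I would pass to the limit term by term. For the accumulation term, the sum $\sum_n\sum_K m_K c_K^n(\varphi_K^n-\varphi_K^{n-1})$ is, up to an error controlled by $\|\p_t^2\varphi\|_\infty(\ov\bdt + h_\Tt)$ (using smoothness of $\varphi$ and $\varphi_K^n - \varphi_K^{n-1}\approx\dt_n\p_t\varphi$), equal to $\iint_{Q_T}\pi_{\Tt,\bdt}\bc\,\pi_{\Tt,\bdt}(\p_t\varphi) + o(1)$, which converges to $\iint_{Q_T}c\,\p_t\varphi$ by the almost-everywhere convergence~\eqref{eq:conv_ae} and dominated convergence. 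Similarly $\sum_K m_K c_K^0\varphi_K^0 \to \int_\O c^0\varphi(0,\cdot)$ using~\eqref{eq:cK0} and the consistency of the discretization. For the flux term: $\grad_{\Tt_m,\bdt_m}\bvarphi_m$ converges uniformly to $\grad\varphi$ by smoothness of $\varphi$ (this is the operator $\grad_\Tt$ restricted to the nodal interpolant, and one should check weak consistency suffices — actually for a smooth $\varphi$ one has strong/uniform consistency on the diamond reconstruction, or one replaces $\grad_\Tt\bvarphi$ by $\wh\grad_\Tt\bvarphi$ as in Proposition~\ref{prop:convPhi}); $\grad_{\Tt_m,\bdt_m}r(\bc_m)\rightharpoonup\grad r(c)$ weakly in $L^2$ by Proposition~\ref{prop:compact}; $\grad_{\Tt_m,\bdt_m}\bPhi_m\rightharpoonup\grad\Phi$ weakly-$\star$ in $L^\infty((0,T);L^2)$ by Proposition~\ref{prop:convPhi}; and $c_{\Ee_m,\bdt_m}\to c$ strongly in every $L^p(Q_T)$, $p<\infty$, by Lemma~\ref{lem:c_Ee}. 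The product $c_{\Ee_m,\bdt_m}\grad_{\Tt_m,\bdt_m}\bPhi_m$ then converges weakly in $L^2(Q_T)$ to $c\grad\Phi$ (strong times weak), and against the uniformly convergent $\grad_{\Tt_m,\bdt_m}\bvarphi_m$ we get $\iint_{Q_T}c\grad\Phi\cdot\grad\varphi$; likewise the $r$-term gives $\iint_{Q_T}\grad r(c)\cdot\grad\varphi$. Collecting, the limit is exactly~\eqref{eq:weak_c}.

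The main obstacle is controlling the discrepancy term $\sum_n\dt_n\sum_{\sig}\tau_\sig(\Cc_\sig^n - \wt\Cc_\sig^n)D_{K\sig}h(\bc^n)D_{K\sig}\bvarphi^{n-1}$, which has no obvious sign and involves $D_\sig h(\bc^n)$ that is only controlled through the dissipation, not in $L^2$ uniformly. I would bound it by $\|\grad_{\Tt,\bdt}\bvarphi\|_\infty$ times $\sum_n\dt_n\sum_\sig\tau_\sig|\Cc_\sig^n-\wt\Cc_\sig^n|\,|D_\sig h(\bc^n)|$, then apply Cauchy–Schwarz to split off $\bigl(\sum_n\dt_n\sum_\sig\tau_\sig\Cc_\sig^n|D_\sig(h(\bc^n)+\bPhi^n)|^2\bigr)^{1/2}$, bounded via Remark~\ref{rem:D_T2}, against a factor $\bigl(\sum_n\dt_n\sum_\sig\tau_\sig\frac{(\Cc_\sig^n-\wt\Cc_\sig^n)^2}{\Cc_\sig^n}|D_\sig h|^2\bigr)^{1/2}$ — wait, this still carries $|D_\sig h|$. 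The correct fix, which is really where the centered/Sedan structure enters, is that for these two schemes one shows $|\Cc_\sig^n - \wt\Cc_\sig^n| \le C\,|D_\sig\bPhi^n|\cdot\wt\Cc_\sig^n$ (for Sedan, $\Cc$ and $\wt\Cc$ agree when $\bPhi$ is constant across $\sig$, by Remark~\ref{rem:Sedan}, so the difference is $O(|D_\sig\bPhi^n|)$ times a bounded factor; for the centered scheme $\wt\Cc$ and $\Cc=\frac{c_K+c_L}2$ likewise differ by $O(|D_\sig\bPhi^n|)$), which turns the discrepancy term into something bounded by $C\|\grad\bvarphi\|_\infty\bigl(\sum\dt_n\tau_\sig|D_\sig\bPhi^n|^2\bigr)^{1/2}\bigl(\sum\dt_n\tau_\sig(\wt\Cc_\sig^n)^2|D_\sig h(\bc^n)|^2\bigr)^{1/2} \le C\|\grad\bvarphi\|_\infty\cdot C^{1/2}\cdot\|\grad_{\Tt,\bdt}r(\bc)\|_{L^2}$, and one should moreover pick up an extra $h_\Tt$ factor from a term like $\sum m_\sig d_\sig D_\sig\bvarphi$; in any case this discrepancy contribution vanishes as $m\to\infty$. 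This is precisely the point where the argument does not extend to the activity-based and Bessemoulin–Chatard schemes, matching the comment after Theorem~\ref{thm:main2}.
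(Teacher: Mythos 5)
Your overall architecture coincides with the paper's: multiply \eqref{eq:scheme_c} by $\dt_n\varphi_K^{n-1}$, sum, use \eqref{eq:FKsig_Csig} to split the flux contribution into a potential part $T_3=\sum_n\dt_n\sum_\sig\tau_\sig\Cc_\sig^n D_{K\sig}\bPhi^n D_{K\sig}\bvarphi^{n-1}$ and a chemical part $T_2$, replace $\Cc_\sig^n$ by $\wt\Cc_\sig^n$ in $T_2$ to recognize $D_{K\sig}r(\bc^n)$, and pass to the limit in $T_1$, $T_3$ and $\wt T_2$ by combining the strong convergence of $c_{\Ee,\bdt}$ and $\pi_{\Tt,\bdt}\bc$ with the weak convergences of the discrete gradients against the strongly consistent reconstruction $\wh\grad_{\Tt,\bdt}\bvarphi$. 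All of that matches the paper and is sound.

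The gap is in the one step that is actually the crux, namely $|T_2-\wt T_2|\to 0$. Your proposed inequality $|\Cc_\sig^n-\wt\Cc_\sig^n|\le C\,|D_\sig\bPhi^n|\,\wt\Cc_\sig^n$ is simply false for the centered scheme: there $\Cc_\sig^n=(c_K^n+c_L^n)/2$ and $\wt\Cc_\sig^n=D_\sig r(\bc^n)/D_\sig h(\bc^n)$ are \emph{both} independent of the potential, yet they differ in general (take $c_K=0.1$, $c_L=0.5$: the arithmetic mean is $0.3$ while $\wt\Cc\approx 0.27$), so the right-hand side can vanish while the left-hand side does not. For the Sedan scheme the inequality is at best plausible and you do not prove it; uniformity near the corners $(0,0)$ and $(1,0)$ is exactly the delicate point. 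Moreover, even granting such a bound, your final estimate is a product of two quantities that are merely \emph{bounded} (the $L^2$ norms of $\grad_{\Tt,\bdt}\bPhi$ and of $\grad_{\Tt,\bdt}r(\bc)$), hence $O(1)$ and not $o(1)$; the ``extra $h_\Tt$ factor'' you wave at is precisely the missing ingredient. The paper's argument is different: apply Cauchy--Schwarz so that \emph{all} of $|D_\sig h(\bc^n)|^2$ lands in the factor $\sum_n\dt_n\sum_\sig\tau_\sig\Cc_\sig^n|D_\sig h(\bc^n)|^2$, which is bounded via Remark~\ref{rem:D_T2} and Lemma~\ref{lem:LinfH1Phi}; the remaining factor is $\mathcal R=\sum_n\dt_n\sum_\sig\tau_\sig|\Cc_\sig^n-\wt\Cc_\sig^n|\,|1-\wt\Cc_\sig^n/\Cc_\sig^n|\,|D_\sig\bvarphi^{n-1}|^2$, in which $|D_\sig\bvarphi^{n-1}|^2\le\|\grad\varphi\|_\infty^2 d_\sig^2$ converts $\tau_\sig$ into $m_\sig d_\sig$, so that $\mathcal R\le C\|c_{\Ee,\bdt}-\wt c_{\Ee,\bdt}\|_{L^1(Q_T)}\to 0$ by Lemma~\ref{lem:c_Ee}. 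The uniform bound $\wt\Cc_\sig^n/\Cc_\sig^n\le G$ of Lemma~\ref{lem:cctilde} (Appendix~\ref{app:alacon}) is what controls $|1-\wt\Cc_\sig^n/\Cc_\sig^n|$, and its proof is the true reason the convergence result is stated only for the centered and Sedan schemes. Your proof cannot be repaired without supplying an argument of this type.
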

\begin{proof}
Let $\varphi \in C^\infty_c([0,T)\times\ov\O)$, then define $\varphi_K^n = \varphi(\x_K,t_n)$ for all $n \in \{0,\dots, N\}$ and $K \in \Tt$.  Multiplying~\eqref{eq:scheme_c}  by $\dt_n\varphi_K^{n-1}$, then summing over $K\in\Tt$ and $n\in\{1,\dots, N\}$ 
and using expression~\eqref{eq:FKsig_Csig} for the fluxes leads to 
\be\label{eq:T123}
T_1 + T_2 + T_3 = 0, 
\ee
where we have set 
\begin{align*}
T_1 = &\sum_{n=1}^N\sum_{K\in\Tt}m_K(c_K^n - c_K^{n-1}) \varphi_K^{n-1},\\
T_2 = &\sum_{n=1}^N\dt_n\sum_{\sig \in \Ee}\tau_\sig\Cc_\sig^n D_{K\sig}h(\bc^n)D_{K\sig}\bvarphi^{n-1},\\
T_3 = &\sum_{n=1}^N\dt_n\sum_{\sig \in \Ee}\tau_\sig\Cc_\sig^n D_{K\sig}\bPhi^nD_{K\sig}\bvarphi^{n-1}.
\end{align*}
The term $T_1$ can be rewritten as 
\[
T_1 = \sum_{n=1}^N\dt_n\sum_{K\in\Tt}m_K c_K^n \frac{\varphi_K^{n-1}-\varphi_K^n}{\dt_n} - \sum_{K\in\Tt}m_K c_K^0 \varphi_K^0,
\]
so that it follows from the convergence of $\pi_{\Tt,\bdt} \bc$ towards $c$ and of $\pi_\Tt \bc^0$ towards $c^0$ together with the regularity of $\varphi$ that 
\be\label{eq:T1}
T_1 \underset{m\to \infty} \longrightarrow -\iint_{Q_T} c \p_t \varphi \d\x\d t - \int_\O c^0 \varphi(0,\cdot)\d\x. 
\ee
On the other hand, the term $T_3$ can be rewritten as 
\[
T_3 = \iint_{Q_T} c_{\Ee,\bdt} \grad_{\Tt,\bdt} \bPhi\cdot \wh \grad_{\Tt,\bdt} \bvarphi \d\x\d t,
\]
where  $\wh \grad_{\Tt,\bdt}$ is the strongly consistent gradient reconstruction operator introduced in the proof of 
Proposition~\ref{prop:convPhi} and in Remark~\ref{rem:convPhi}. In particular, due to the smoothness of $\varphi$, 
$\wh \grad_{\Tt,\bdt} \bvarphi$ converges uniformly towards $\grad \varphi$.
Therefore, it follows from Lemma~\ref{lem:c_Ee} and Proposition~\ref{prop:convPhi} that 
\be\label{eq:T3}
T_3 \underset{m \to \infty} \longrightarrow \iint_{Q_T} c \grad \Phi\cdot \grad \varphi \d\x\d t.
\ee
Define the term 
\[
\wt T_2= 
 \sum_{n=1}^N\dt_n\sum_{\sig \in \Ee}\tau_\sig \wt \Cc_\sig^nD_{K\sig}h(\bc^n)D_{K\sig}\bvarphi^{n-1}
  = \iint_{Q_T}\grad_{\Tt,\dt} r(\bc)\cdot \wh\grad_{\Tt,\dt}\bvarphi \d\x\d t,  
\]
then it follows from Proposition~\ref{prop:compact} that 
\[
\wt T_2 \underset{m\to \infty} \longrightarrow \iint_{Q_T}  \grad r(c)\cdot \grad \varphi \d\x\d t.
\]
Therefore, it only remains to show that 
$|T_2 - \wt T_2|$ tends to $0$ to conclude the proof of Proposition~\ref{prop:conv_c}.
Thanks to the triangle and Cauchy-Schwarz inequalities, one has 
\begin{align*}
|T_2 - \wt T_2| \leq&\;  \sum_{n=1}^N\dt_n\sum_{\sig \in \Ee}\tau_\sig\left|\Cc_\sig^n-\wt\Cc_\sig^n\right| D_{\sig}h(\bc^n)D_{\sig}\bvarphi^{n-1} \\
\leq & \; \left(\sum_{n=1}^N\dt_n\sum_{\sig \in \Ee}\tau_\sig \Cc_\sig^n |D_\sig h(\bc^n)|^2\right)^{1/2}
\left(\sum_{n=1}^N\dt_n\sum_{\sig \in \Ee}\tau_\sig \frac{(\Cc_\sig^n - \wt\Cc_\sig^n)^2}{\Cc_\sig^n} |D_\sig \bvarphi^{n-1}|^2\right)^{1/2}.
\end{align*}
The first term in the right-hand side is uniformly bounded thanks to Remark~\ref{rem:D_T2}, to $0\leq \Cc_\sig^n \leq 1$ and to Lemma~\ref{lem:LinfH1Phi} (we can adapt a part of the proof of \eqref{eq:normL2H10_r}) . 
Thus our problem amounts to show that 
\be\label{eq:Rto0}
\mathcal{R} :=\sum_{n=1}^N\dt_n\sum_{\sig \in \Ee}\tau_\sig \frac{(\Cc_\sig^n - \wt\Cc_\sig^n)^2}{\Cc_\sig^n} |D_\sig \bvarphi^{n-1}|^2
\underset{m\to\infty}\longrightarrow 0.
\ee
Let us reformulate $\mathcal{R}$ as 
\[
\mathcal{R} :=\sum_{n=1}^N\dt_n\sum_{\sig \in \Ee}\tau_\sig |\Cc_\sig^n - \wt\Cc_\sig^n| \left|1 - \frac{\wt\Cc_\sig^n}{\Cc_\sig^n}\right|
 |D_\sig \bvarphi^{n-1}|^2.
\]
Thanks to \eqref{eq:alacon}, the quantity $\left|1 - \frac{\wt\Cc_\sig^n}{\Cc_\sig^n}\right|$ is uniformly bounded, 
whereas the regularity of $\varphi$ implies that 
$D_\sig \bvarphi^{n-1} \leq \|\grad \varphi\|_\infty d_\sig$. 
Putting this in the above expression of $\mathcal R$, we obtain that 
\[
\mathcal{R} \leq C \|c_{\Ee,\bdt} - \wt c_{\Ee,\bdt}\|_{L^1(Q_T)} \underset{m\to \infty}\longrightarrow 0,
\]
thanks to Lemma~\ref{lem:c_Ee}.
\end{proof}
% ==========================================================
% 					Numerics
% ==========================================================
\section{Numerical comparison of the schemes}\label{sec:numerics}

The numerical examples \cite{UnipolarDriftDiffusion} have been implemented in the Julia language
\cite{bezanson2017julia} based on the package \texttt{VoronoiFVM.jl} \cite{VoronoiFVM}
which realizes the implicit Euler Voronoi finite volume method for nonlinear diffusion-convection-reaction
equations on simplicial grids. The resulting nonlinear systems of equations are solved using Newton's method
with parameter embedding. An advantage of the implementation in Julia is the availability of \texttt{ForwardDiff.jl}
\cite{RevelsLubinPapamarkou2016},
an automatic differentiation package. This package allows the assembly of analytical Jacobians based on
a generic implementation of nonlinear parameter functions without the need to write source code
for derivatives.

\subsection{1D time evolution and convergence test}

\begin{figure}[h]
  \begin{center}
    \includegraphics[width=0.5\linewidth]{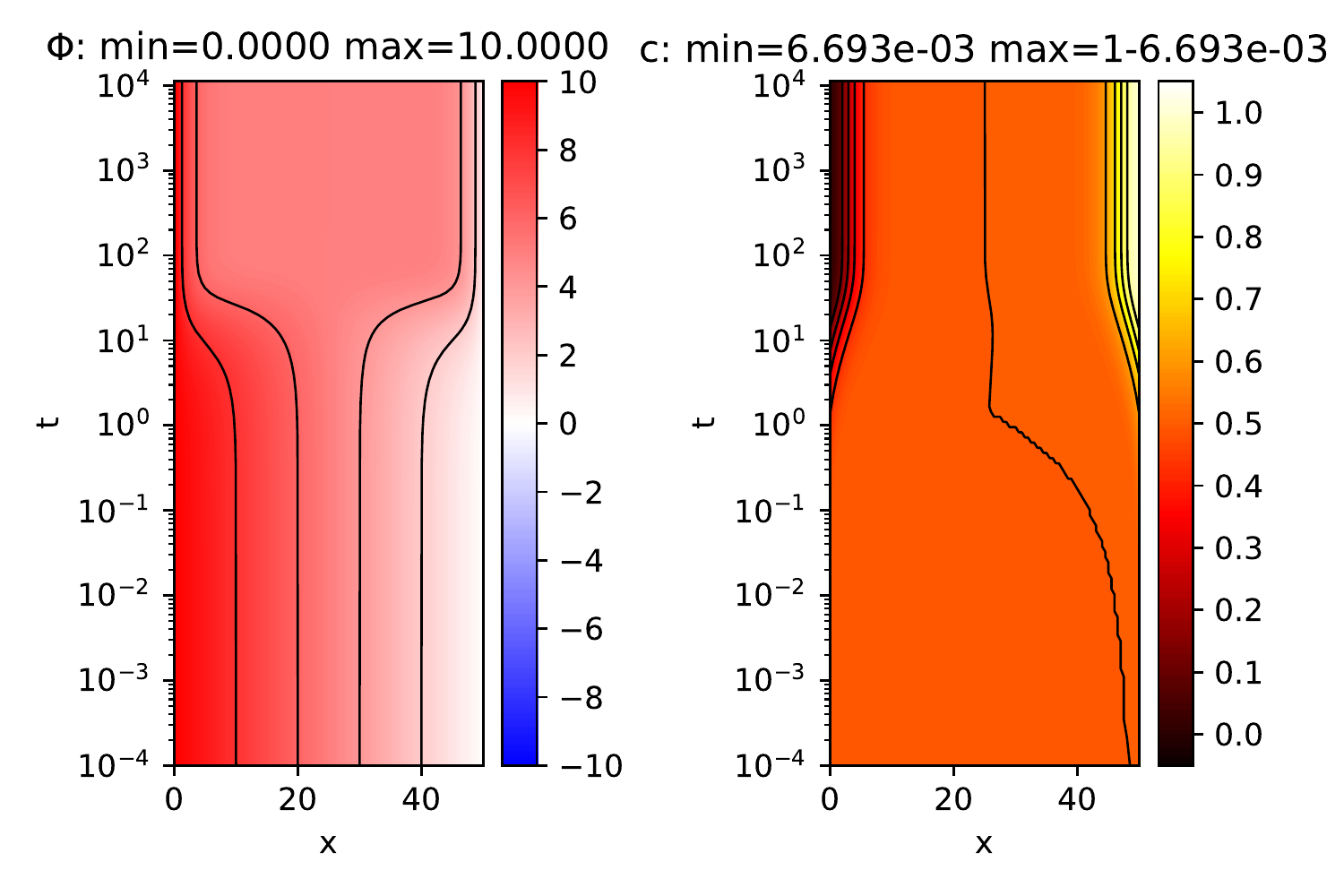}
    \includegraphics[width=0.45\linewidth]{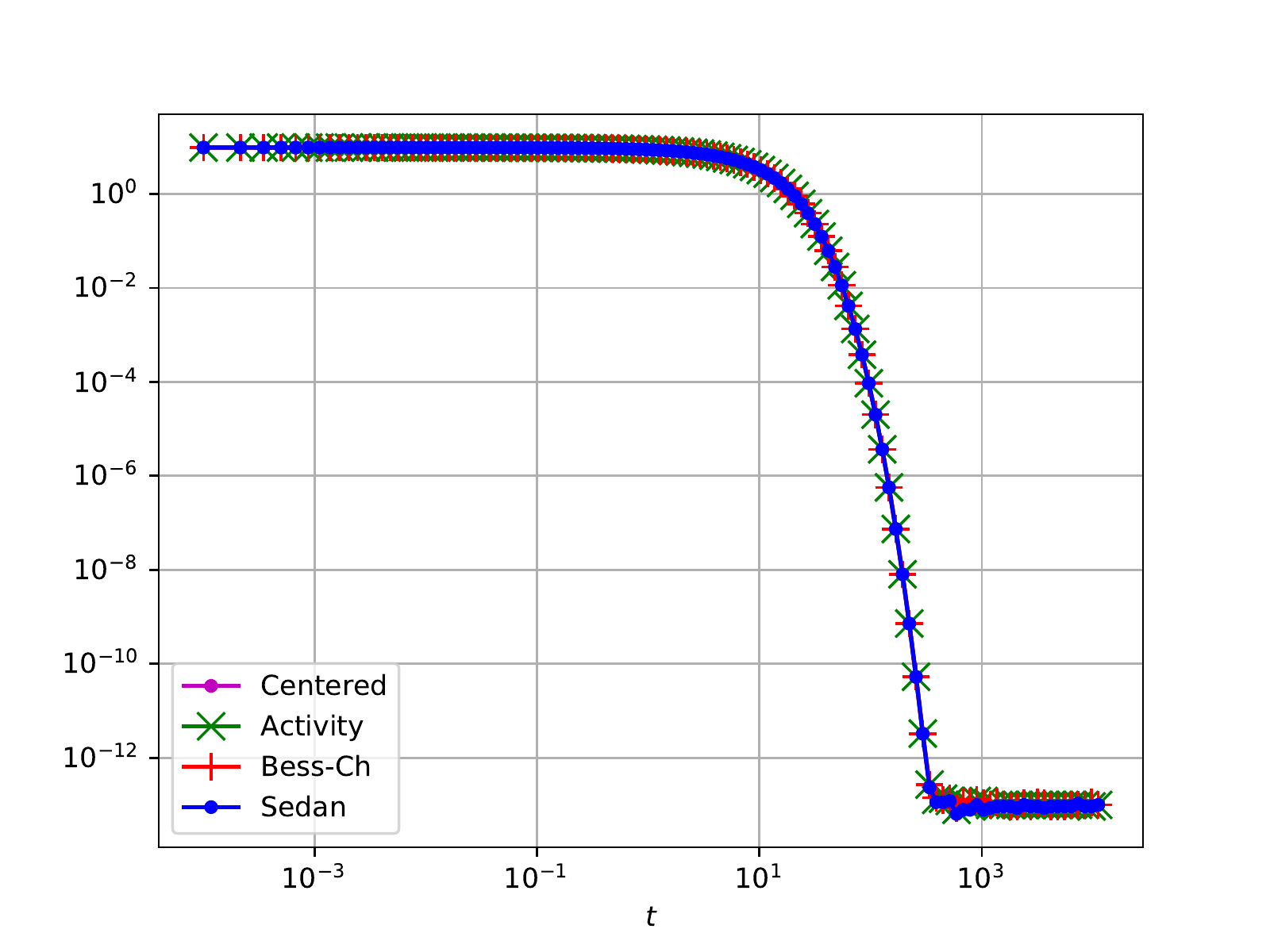}
    \caption{Left: time evolution of solution on domain $\Omega=(0,50)$ from constant initial value $c=\frac12$
      with Dirichlet boundary conditions $\Phi(0)=10$, $\Phi(50)=0$, $c^{dp}=-\frac12$ and homogeneous Neumann boundary conditions
      for $c$. Right: Evolution of the relative free energy  according to \eqref{eq:E}.}
    \label{fig:evoli}
  \end{center}
\end{figure}

\begin{figure}[h]
  \begin{center}
    \includegraphics[width=0.5\linewidth]{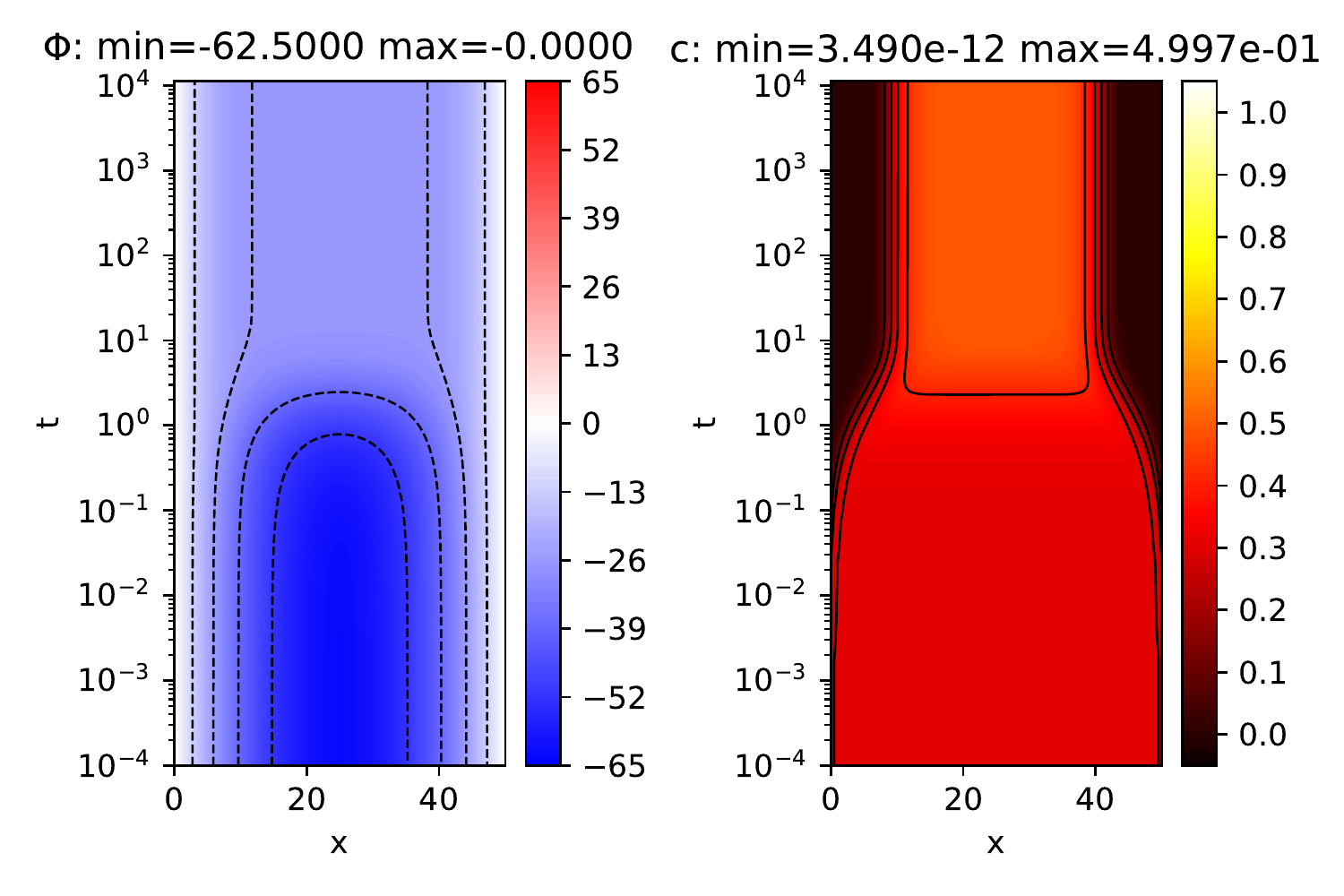}
    \includegraphics[width=0.45\linewidth]{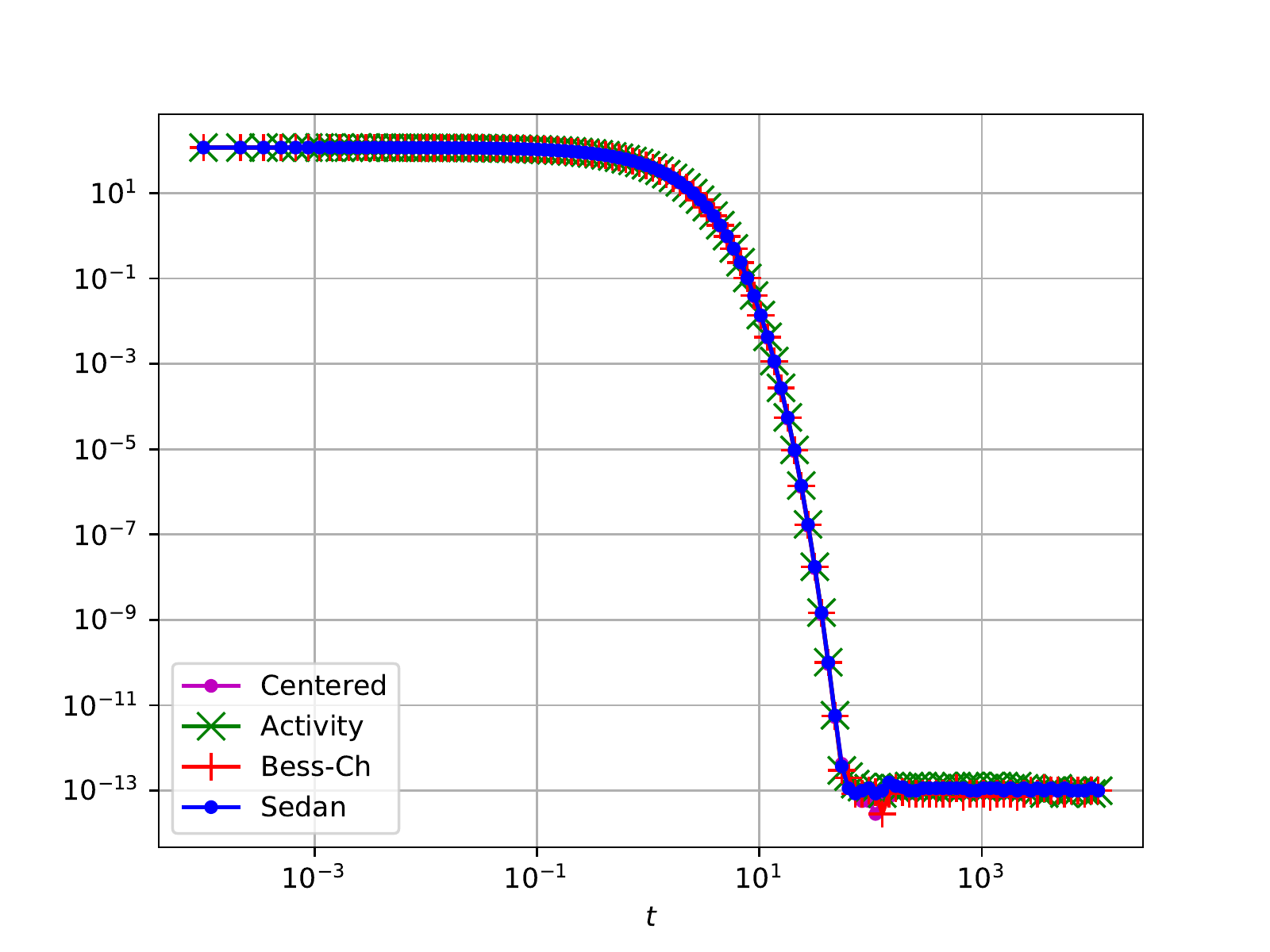}
    \caption{Left: time evolution of solution on domain $\Omega=(0,50)$ from constant initial value $c=0.3$
      with Dirichlet boundary conditions $\Phi(0)=0$, $\Phi(50)=0$, $c^{dp}=-\frac12$ and homogeneous Neumann boundary conditions
      for $c$. Right: Evolution of the relative free energy  according to \eqref{eq:E}.}
    \label{fig:evolii}
  \end{center}
\end{figure}

\begin{figure}[h]
  \begin{center}
    \includegraphics[width=0.5\linewidth]{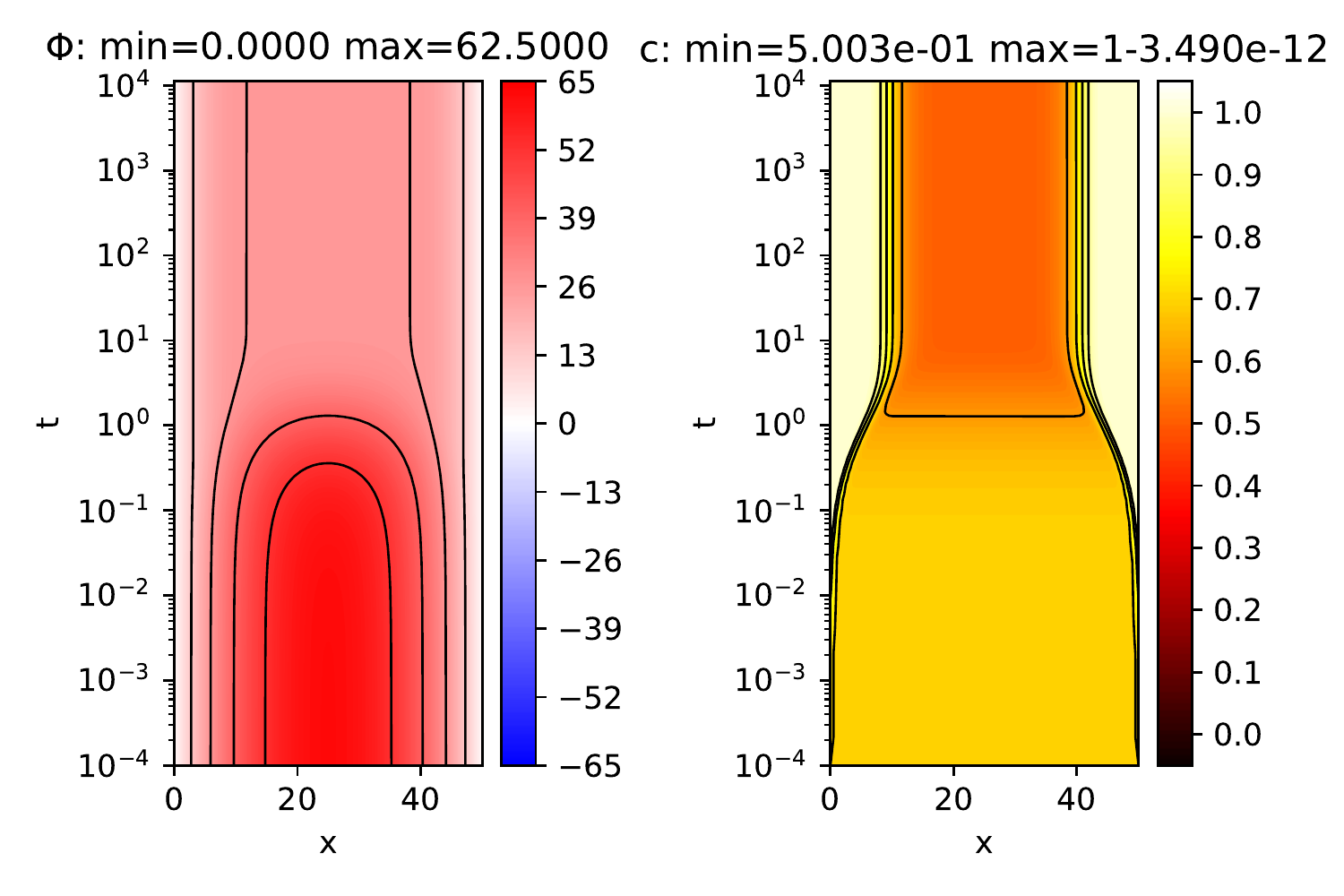}
    \includegraphics[width=0.45\linewidth]{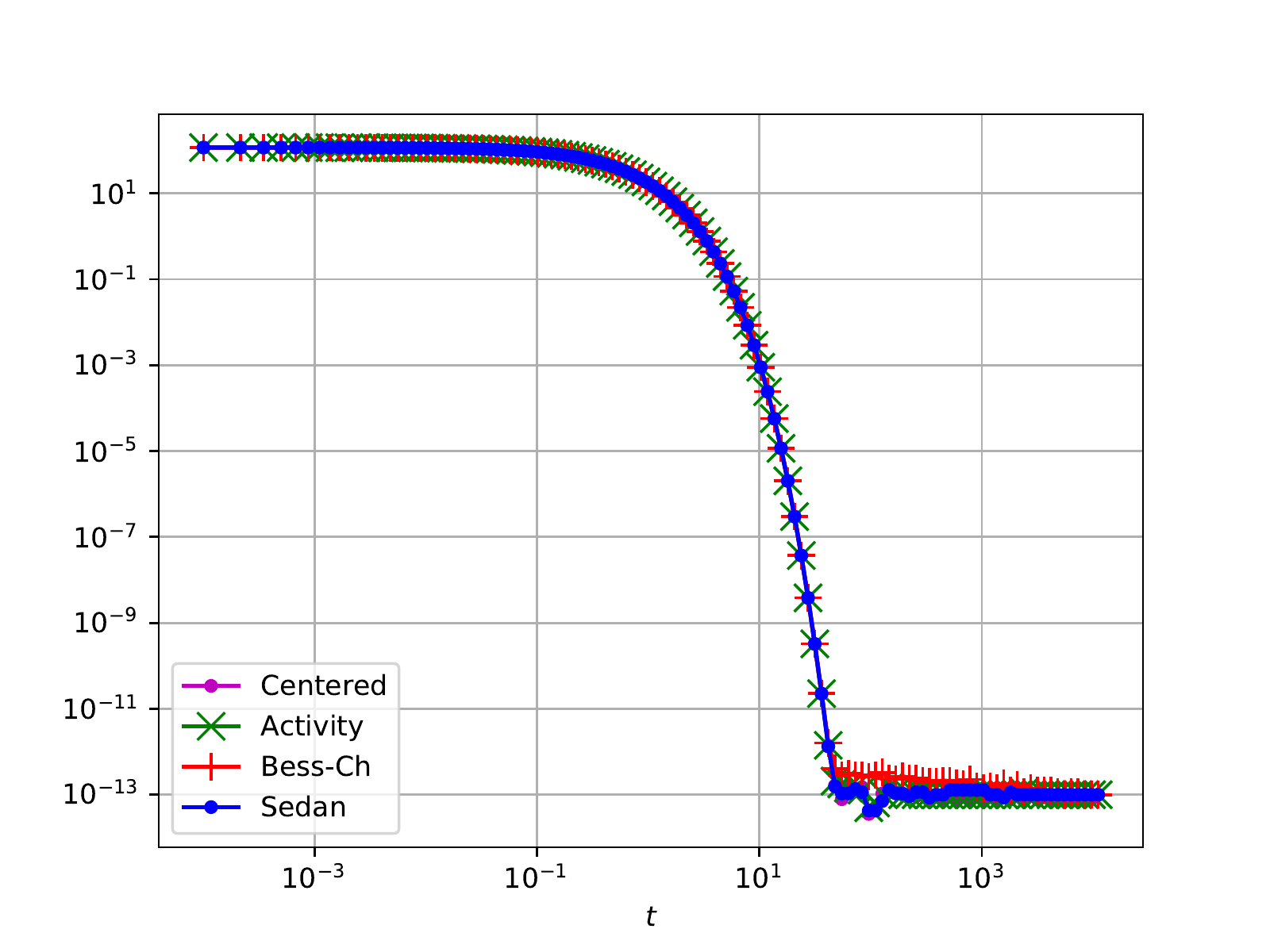}
    \caption{Left: time evolution of solution on domain $\Omega=(0,50)$ from constant initial value $c=0.7$
      with Dirichlet boundary conditions $\Phi(0)=0$, $\Phi(50)=0$, $c^{dp}=-\frac12$ and homogeneous Neumann boundary conditions
      for $c$. Right: Evolution of the relative free energy  according to \eqref{eq:E}.}
    \label{fig:evoliii}
  \end{center}
\end{figure}

\begin{figure}[h]
  \begin{center}
    \includegraphics[width=0.49\linewidth]{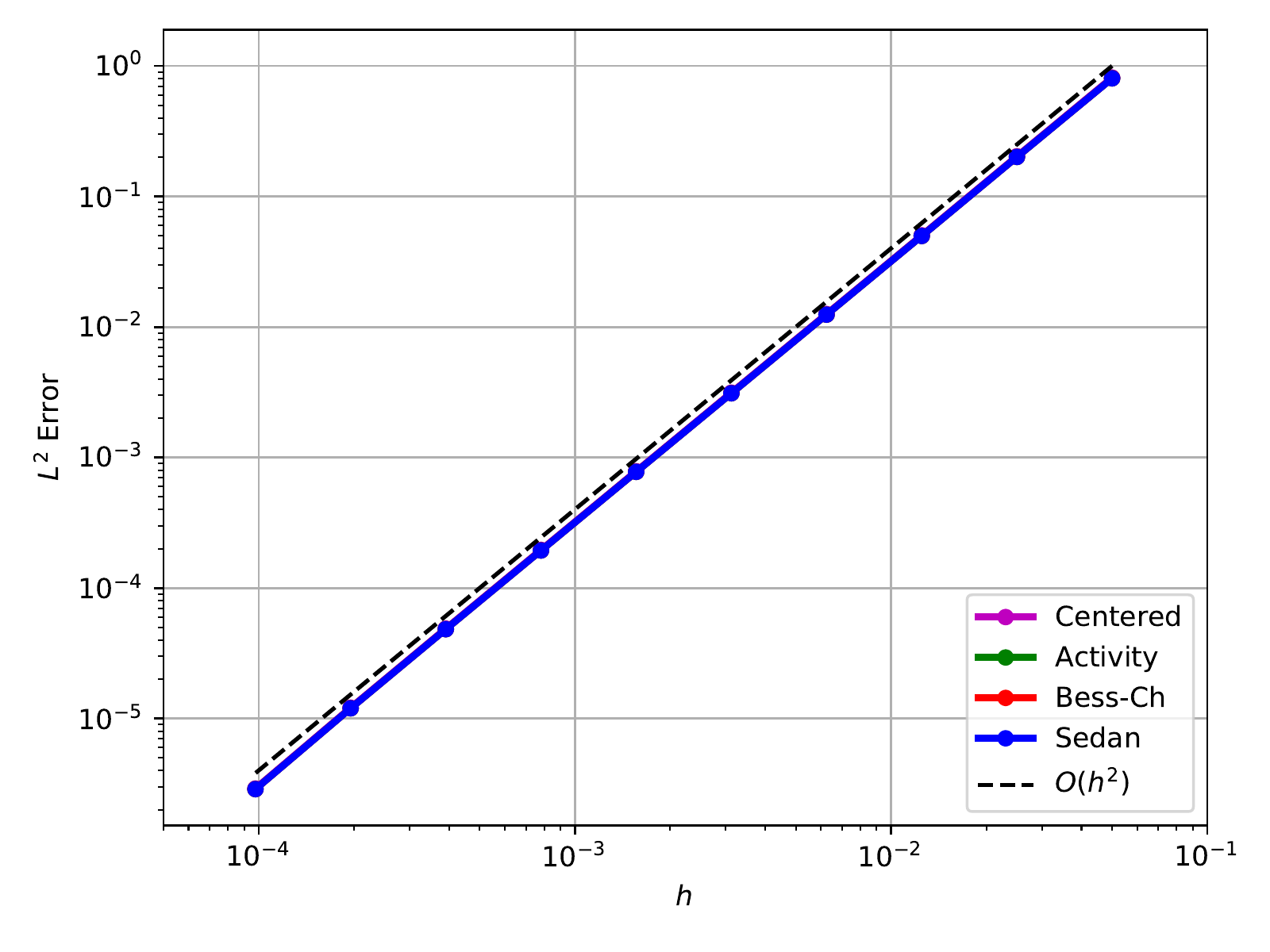}
    \includegraphics[width=0.49\linewidth]{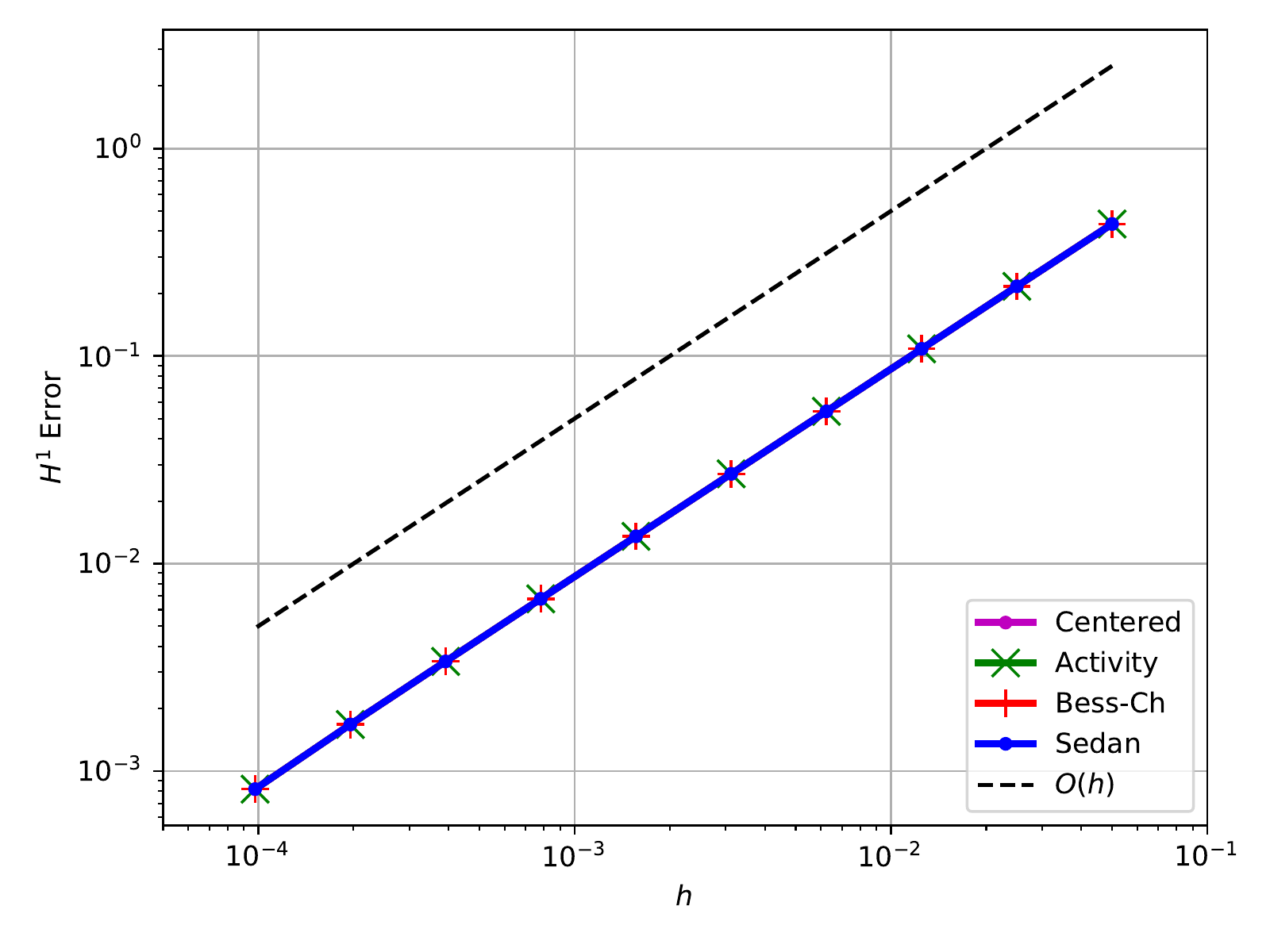}
    \caption{Convergence behavior of the different schemes for the case depicted
      in Fig. \ref{fig:evoli}: comparison of solutions at $t=10$.
      Left: $L^2$-error, right: $H^1$ error. Correspondence to the equation in the paper:
      ``Centered'': \eqref{eq:centered_flux}, ``Sedan'': \eqref{eq:SEDAN_flux}, ``Activity'': \eqref{eq:activity_flux}, ``Bess-Ch'': \eqref{eq:Marianne_flux}.
    }
    \label{fig:schemes-tran}
  \end{center}
\end{figure}

The first group of examples  considers the problem as described by
\eqref{eq:cons_loc}-\eqref{Poisson equation} in a one-dimensional domain
with Dirichlet boundary conditions for $\Phi$ and homogeneous Neumann boundary conditions
for $c$. We regard the time evolution from
a zero potential $\Phi$  and constant concentrations $c_0$. In all examples
we assume a constant doping concentration $c^{dp}=-\frac12$.  Calculations
have been performed with subdivision of the domain $\Omega=(0,L)$ into 100 control
volumes. Time steps have been chosen in a geometric progression $t_i=t_1*\delta^i$
with $\delta=1.15$ and $t_1=10^{-4}$.

In the first example (Fig. \ref{fig:evoli}), $c_0=0.5$, and the initial amount of charge carriers exactly matches the
amount of doping. With the start of time evolution, at $x=0$ a potential of $10$ is
applied leading to a redistribution  of the charge carrier concentration which for  large $t$ approaches a steady
state with two space charge regions at the boundaris with opposite charge and an electroneutral region with $c=0.5$ in the center of the domain. We remark, that the $c$ stays in the range $(0,1)$, and that the energy \eqref{eq:E} decreases
during time evolution for all four schemes discussed in this paper. We also remark that for zero applied
potential, the constant values $\Phi=0$ and $c=0.5$ would comprise a solution for all $t>0$.

Fig. \ref{fig:evolii} considers the case $c_0=0.3$. The available amount of charge carriers is not
able to compensate the amount of doping. At the end of the time evolution, the charge carriers
are concentrated in the center of the domain, establishing an electroneutral region. At both
boundaries, depletion boundary layers create equally charged space charge regions
due to the lack of charge carriers able to compensate
the doping.

Fig. \ref{fig:evoliii}  considers the case  $c_0=0.7$ which in  sense is
symmetric to the previous one. There is again an electroneutral region
in the  center, and  this time,  ``superfluous'' charge carriers are
forced to enrichment boundary layers.

Fig. \ref{fig:schemes-tran} provides a comparison of the convergence behavior for
the test case discussed in Fig. \ref{fig:evoli}. We compare the solutions at a moment
of time where we observe a rather large descent of the relative free energy based
on a reference solution obtained on a fine grid of 40960 nodes.
We observe first order convergence in the $H^1$ norm and second order convergence in
the $L^2$ norm. No significant difference between the results for the various schemes.

\subsection{1D stationary convergence test}

\begin{figure}[h]
  \begin{center}
    \includegraphics[width=0.5\linewidth]{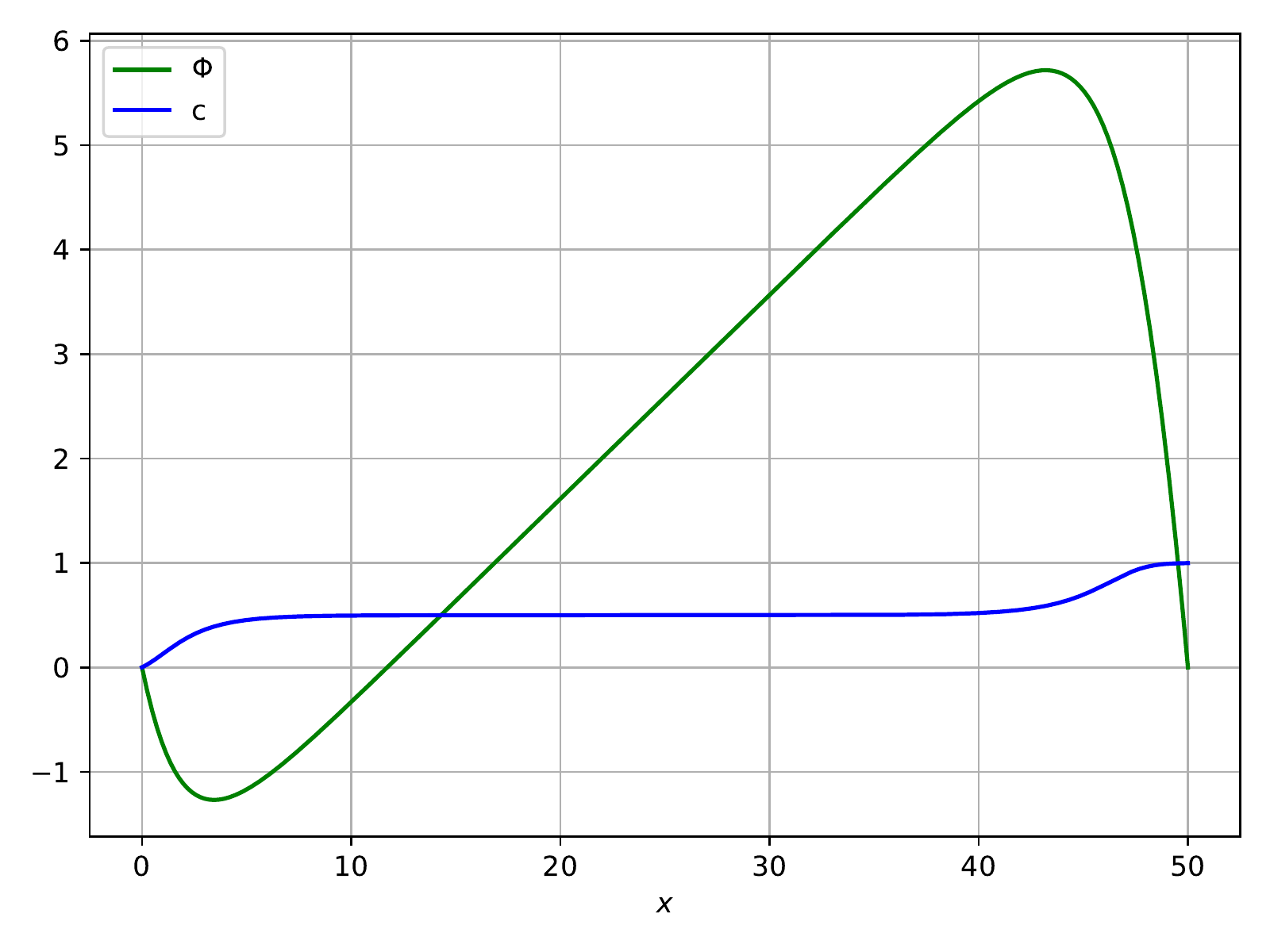}
    \caption{Stationary solution with
      Dirichlet boundary conditions for $c$ and $\Phi$.}
    \label{fig:refsol}
  \end{center}
\end{figure}

\begin{figure}[h]
  \begin{center}
    \includegraphics[width=0.49\linewidth]{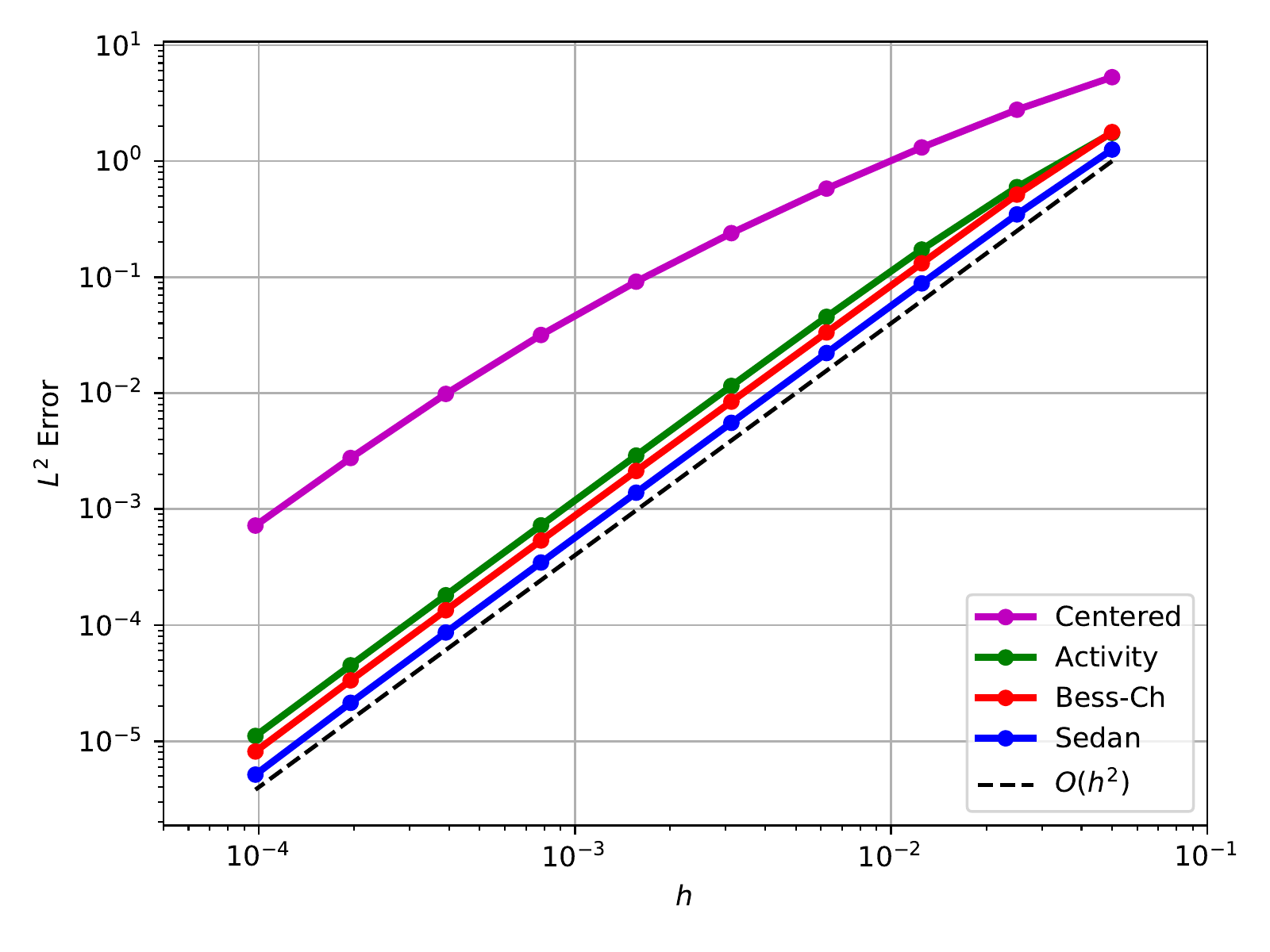}
    \includegraphics[width=0.49\linewidth]{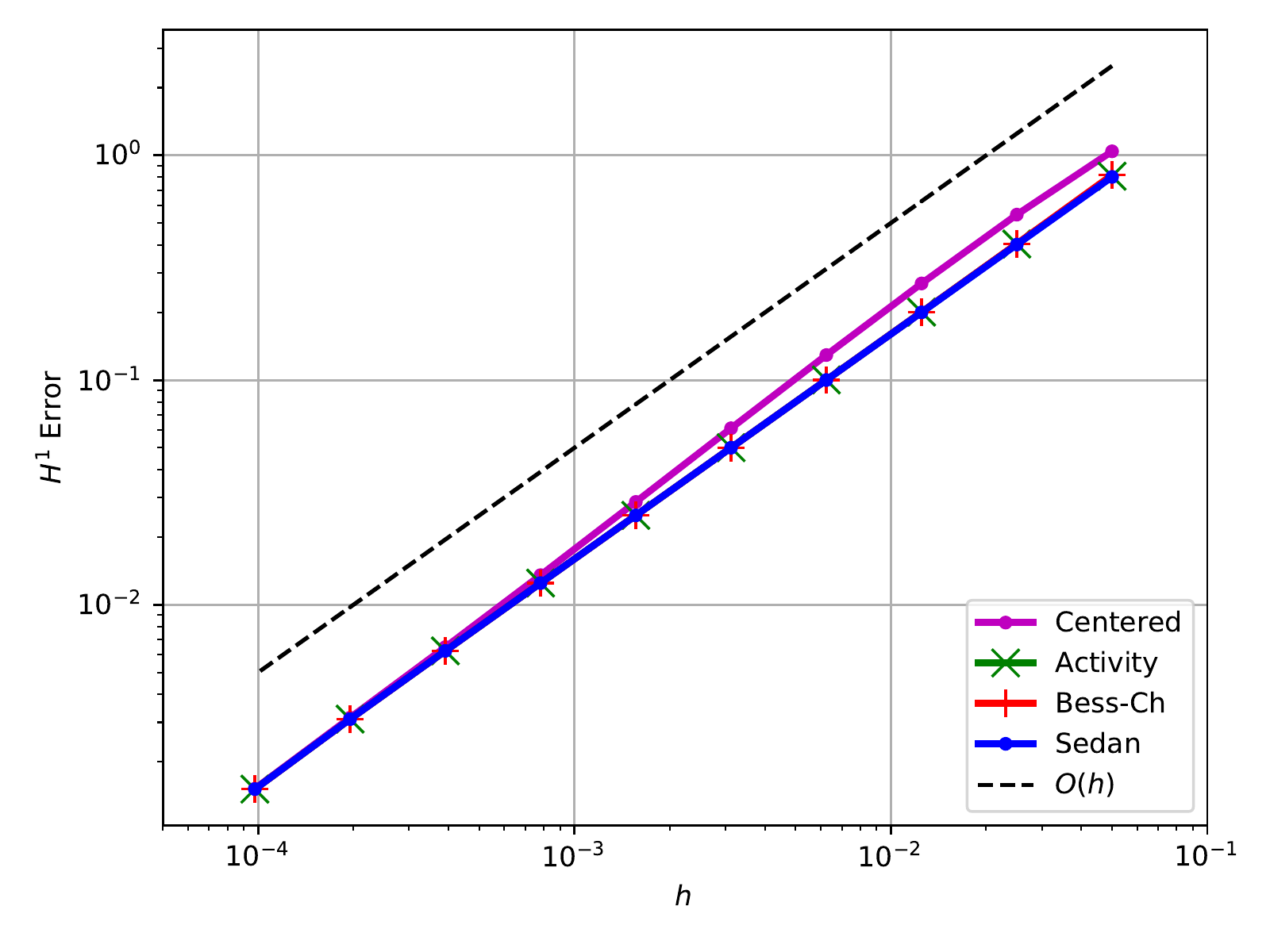}
    \caption{Convergence behavior of the different schemes.
      Left: $L^2$-error, right: $H^1$ error. Correspondence to the equation in the paper:
      ``Centered'': \eqref{eq:centered_flux}, ``Sedan'': \eqref{eq:SEDAN_flux}, ``Activity'': \eqref{eq:activity_flux}, ``Bess-Ch'': \eqref{eq:Marianne_flux}.
    }
    \label{fig:schemes}
  \end{center}
\end{figure}

In order to reveal the behavior of the various schemes under more extreme conditions,
this convergence test outside of thermodynamic equilibrium includes regions of the solution with concentrations extremely close to 0 and 1, respectively, enforced by inhomogeneous Dirichlet boundary conditions for the concentration,
thus leaving the realm of the analysis in this paper.
Once again, we assume $\Omega=(0,L)$ with $L=50$, $c^{dp}=-\frac12$. We set boundary values $\Phi(0)=\Phi(L)=0$
for the electrostatic potential, and $c(0)=10^{-3}, c(L)=1-10^{-3}$. We calculate a reference
solution using the scheme \eqref{eq:SEDAN_flux} on a fine grid of 40960 nodes with grid spacing $h\simeq1.22\cdot 10^{-3}$, see
Fig. \ref{fig:refsol}. We use this solution as a surrogate for an analytical solution in a numerical investigation
of the convergence rates of the different schemes. The result is shown in Fig. \ref{fig:schemes}.
We observe, that both in the $H^1$ and the $L^2$ norms, the schemes based on the modification
of the Scharfetter-Gummel idea behave significantly better than the centered scheme. 
This is probably due to the Dirichlet boundary condition close to $0$ where the  
function $c\mapsto h(c)$ appearing explicitly in the centered scheme is singular. 
Judging from the $L^2$ error plot in Fig. \ref{fig:schemes} (left), the  scheme \eqref{eq:SEDAN_flux} converges
better than  all the others. Asymptotically, all schemes show the same standard behavior: we observe
second order convergence in the $L^2$ norm and first order convergence in the $H^1$-norm.

\subsection{2D Unipolar Field Effect Transistor}

\begin{figure}
  \begin{center}
    \includegraphics[width=0.5\textwidth]{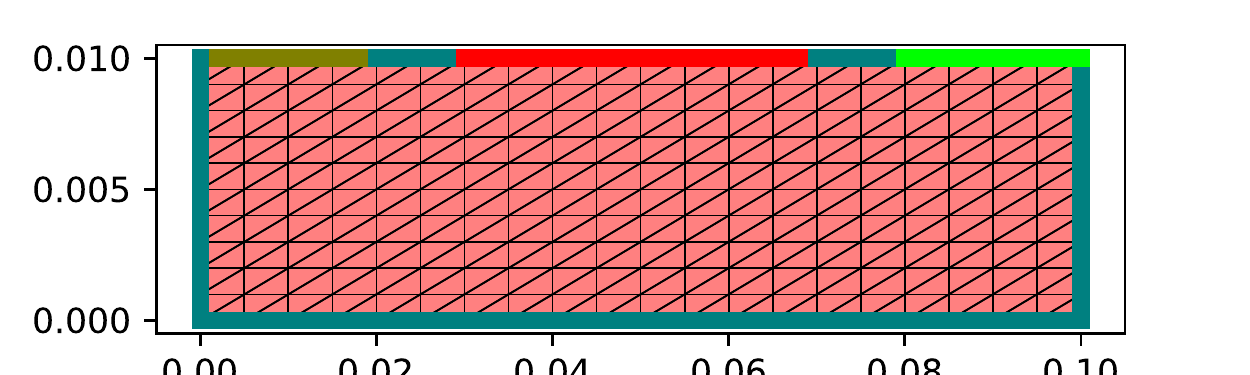}\hfill
    \includegraphics[width=0.5\textwidth]{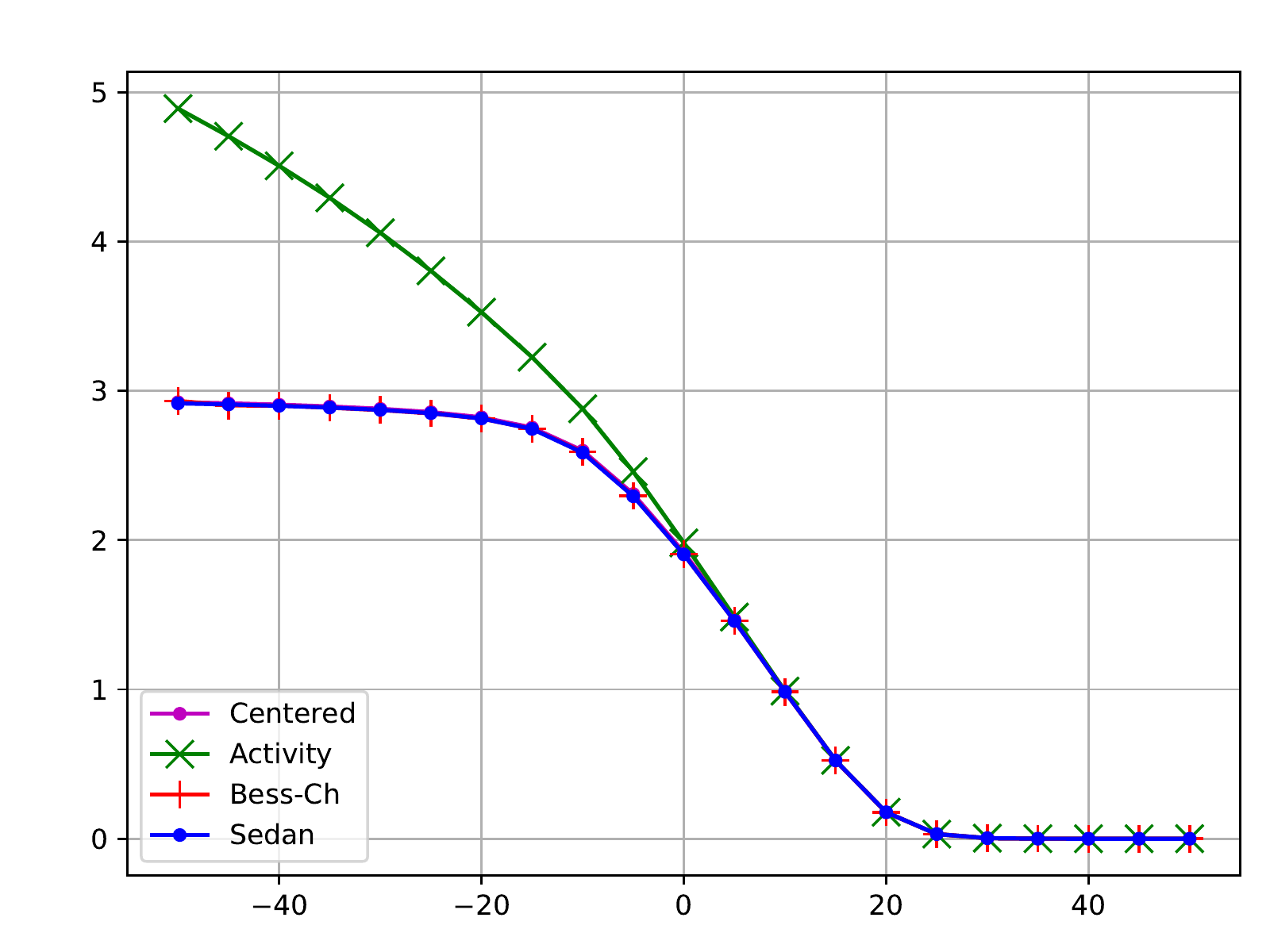}\\
  \end{center}
  \caption{Discretization grid of refinement level $n_{ref}=1$ (left) and  corresponding I-V curves for
    different discretization schemes (right).}
  \label{fig:gridiv}
\end{figure}

\begin{figure}

  \begin{center}
    \includegraphics[width=\textwidth]{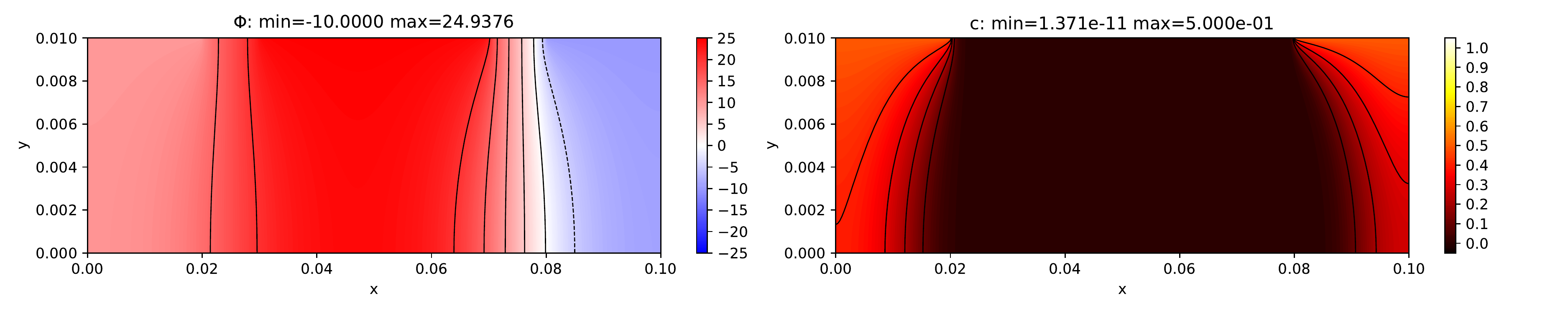}\hfill
  \end{center}
  \caption{Electrostatic potential (left) and concentration (right) for closed gate ($U_{gate}=50$).}
  \label{fig:closed}
\end{figure}

\begin{figure}

  \begin{center}
    \includegraphics[width=\textwidth]{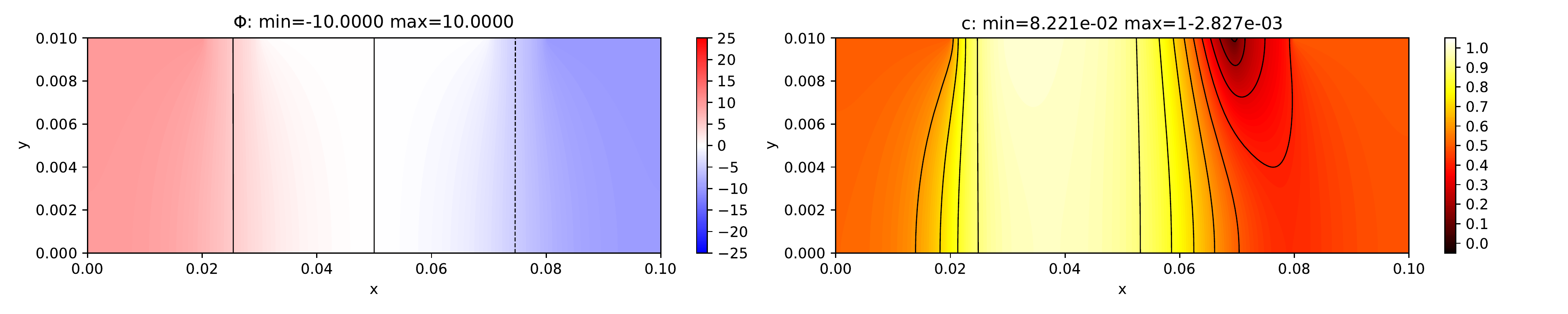}\hfill
  \end{center}
  \caption{Electrostatic potential (left) and concentration (right) for $U_{gate}=0$).}
  \label{fig:g0}
\end{figure}

\begin{figure}

  \begin{center}
    \includegraphics[width=\textwidth]{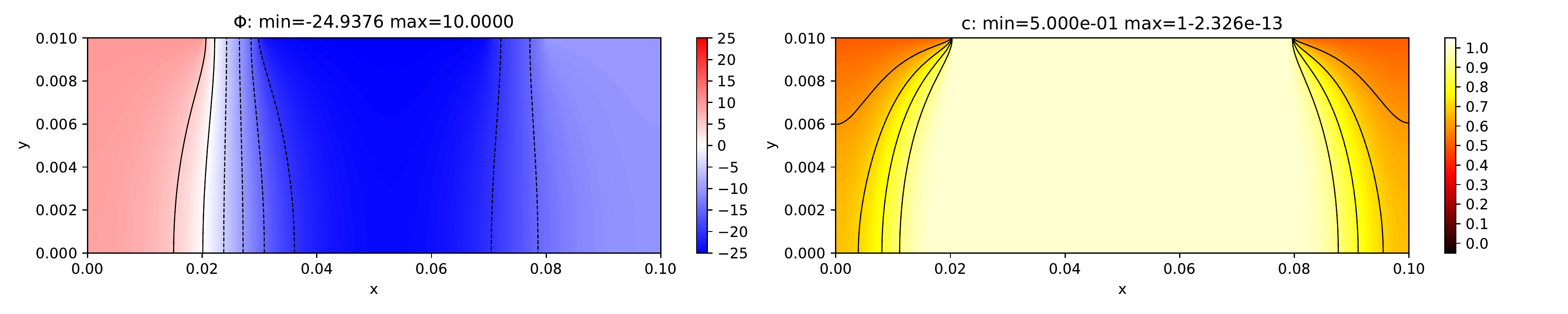}\hfill
  \end{center}
  \caption{Electrostatic potential (left) and concentration (right) for open gate ($U_{gate}=-50$), with
  concentration in the channel reaching the saturation value 1.}
  \label{fig:open}
\end{figure}

\begin{figure}
  \begin{center}
    \includegraphics[width=0.5\textwidth]{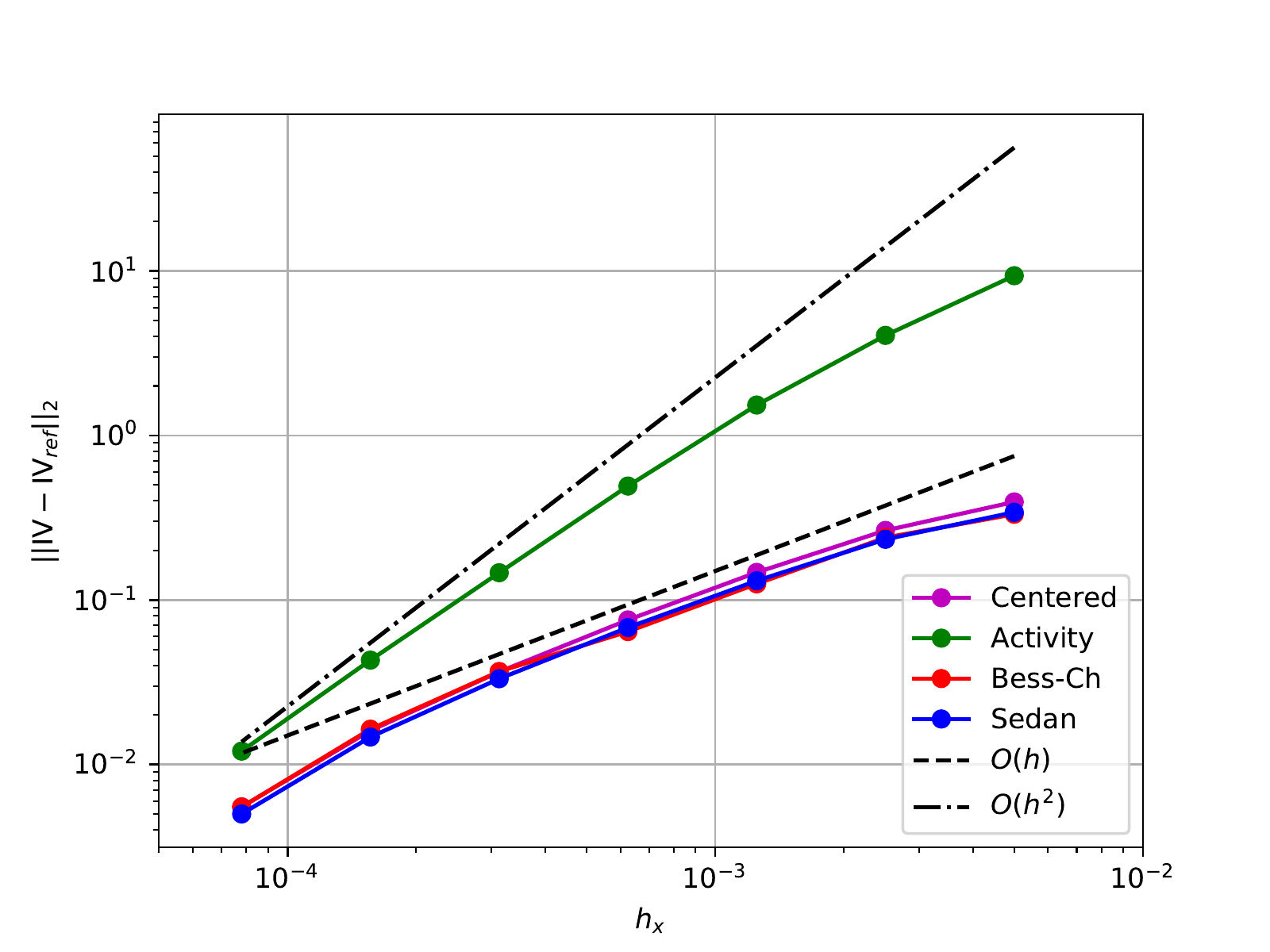}\hfill
  \end{center}
  \caption{ Convergence of the I-V curves calculated using the different discretization schemes.}
  \label{fig:erriv}
\end{figure}

As a second example, we consider an unipolar field effect transistor.
The domain is $\Omega=(0,L)\times(0,H)$ with $L=10^{-2}$, $H=10^{-3}$. We let $c^{dp}=-\frac12$, and set the following boundary conditions
at the contacts:
\begin{align*}
  \begin{pmatrix}
    \Phi\\c 
  \end{pmatrix} &=
  \begin{pmatrix}
    -5 \\ \frac12
  \end{pmatrix} \text{at} \;\Gamma_{source}= (0,0.2\cdot L)\times H\\
  \begin{pmatrix}
    \Phi\\c 
  \end{pmatrix} &=
  \begin{pmatrix}
    5 \\ \frac12
  \end{pmatrix} \text{at} \;\Gamma_{drain}= (0.8\cdot L,L)\times H\\
  \begin{pmatrix}
    \nabla\Phi\cdot \mathbf{n} \\ \mathbf J\cdot\mathbf{n}
  \end{pmatrix} &=
  \begin{pmatrix}
     - \frac{1}{d}(\Phi-U_{gate}) \\ 0
  \end{pmatrix} \text{at} \;\Gamma_{gate}= (0.3\cdot L, 0.7\cdot L)\times H\\
  \begin{pmatrix}
    \nabla\Phi\cdot \mathbf{n} \\ \mathbf J\cdot\mathbf{n}
  \end{pmatrix} &=
  \begin{pmatrix}
     0\\ 0
  \end{pmatrix} \text{at} \;\partial\Omega\setminus (\Gamma_{gate}\cup\Gamma_{source}\cup\Gamma_{drain}).
\end{align*}

Here, $\Phi_{gate}\in (-50,50)$ is the gate voltage, and $d=0.1\cdot H$ is the gate thickness.
We introduce a slightly anisotropic rectangular grid $n_x\times n_y$ grid with $n_x=10\times 2^{n_{ref}}$
and $n_y=5\times 2^{n_{ref}}$, where $n_{ref}$ is the refinement level. Each cell in the rectangular
grid is subdivided into two triangles, see Fig. \ref{fig:gridiv} (left). From the resulting triangle mesh, the Voronoi tesselation is
obtained.

With fixed source and drain voltages, we vary the gate voltage  $U_{gate}$ from 50 to -50. At
$U_{gate}=50$, the positive applied potential pushes away the positively charged carriers from
the channel -- the region under the gate contact, see Fig. \ref{fig:closed}. The resulting lack of charge carriers results in a near zero
current. With decreasing gate voltage, more and more charge carriers are allowed into the channel,
leading to an increase in the current. When the gate voltage decreases further, charge carriers
are attracted to the gate contact and fill up the channel. Due to the degeneration, their concentration cannot exceed 1. As
a result, we observe a saturation of the current close to some maximum value for gate voltages
approaching -50, see Fig. \ref{fig:closed}.

All  schemes under  consideration with the exception of  \eqref{eq:activity_flux}  represent this  saturation
behaviour   quite   well  already   at   rather   coarse  grids,   see
Fig. \ref{fig:gridiv} (right). This appears to be in line with earlier
investigations of  the scheme based  on activity averaging \cite{FKF17}  which hint
that  its  asymptotic  behavior  for  large  electric  fields  is  not
satisfactory.

In order get an idea about the convergence in this case, we
produce a reference solution on a grid with 821121 nodes using the
scheme \eqref{eq:SEDAN_flux} and compare the calculated I-V curves. The
behavior of the error in the I-V curves is shown in Fig. \ref{fig:erriv}.
While all four schemes exhibit convergence of order at least $O(h)$, the activity based scheme \eqref{eq:activity_flux}
converges with a constant approximately one order of magnitude larger
than the others.

%%%%%%%%%%%%%%%%%%%%%%%%%%%%%%%%%%%%%%%%%
%%%%%%%%%%%%%%  APPENDIX  %%%%%%%%%%%%%%%%%%%
%%%%%%%%%%%%%%%%%%%%%%%%%%%%%%%%%%%%%%%%%
\appendix

% ==========================================================
% 					Linf estimate Phi
% ==========================================================
\section{$L^\infty$ bound on the TPFA FV approximate Poisson equation}\label{app:LinfPhi}

It is well know that the solution to the Poisson equation 
\be\label{eq:Poisson}
\begin{cases}
- \Delta u = f & \text{in}\;\O, \\
u = u^D & \text{on}\; \Gamma^D,\\
\grad u \cdot \n = 0 & \text{on}\;\Gamma^N,
\end{cases}
\ee
is bounded in $L^\infty(\O)$ provided $f \in L^\infty(\O)$ and $u^D \in L^\infty(\p\O)$. The goal 
of this appendix is to get a discrete counterpart of this estimate for TPFA finite volume 
approximations of~\eqref{eq:Poisson}. 
The data $u^D$ and $f$ are discretized into 
\[
u^D_\sig = \frac1{m_\sig}\int_\sig u^D(\bgamma) \d\bgamma, \qquad 
f_K = \frac1{m_K} \int_K f \d\x, \qquad \sig \in \Ee^D, \; K \in \Tt. 
\]
and the classical TPFA finite volume scheme writes:
$$
-\sum_{\sig\in \Ee_{K} }\tau_\sig D_{K\sig} u = m_K f_K,\quad \forall K\in\Tt. 
$$

The associate linear system of equations can be written as
\be\label{eq:Lu=b}
\bbL \bu = \boldsymbol{b}, 
\ee
with $\bu = (u_K,u_\sig)_{K\in\Tt,\sig\in\Ee^D}$ (let us note that we keep the Dirichlet nodes in the set of unknowns), 
$\boldsymbol{b} = \left(f_K,u^D_\sig\right)_{K\in\Tt,\sig\in\Ee^D}$ and 
$\bbL \in \R^{(\Tt+\Ee^D)\times(\Tt+\Ee^D)}$ is the sparse symmetric definite positive matrix defined by
\[
\bbL_{\sig,\sig} = 1, \qquad \bbL_{\sig,\ell} = 0 \;\text{if}\; \ell \neq \sig, \qquad \sig \in \Ee^D, 
\]
\[
\bbL_{K,K\sig} = -\frac{\tau_\sig}{m_K}, \qquad \bbL_{K,K} = \frac1{m_K}\sum_{\sig\in\Ee_K} \tau_\sig, \qquad K\in\Tt. 
\]
In the above definition of $\bbL$, $\ell$ denotes an arbitrary index in $\Tt\cap\Ee^D$, whereas $K\sig$ denotes the mirror index of $K$ 
w.r.t. the faces $\sig \in \Ee_K$, i.e., $K\sig = L$ if $\sig=K|L \in \Ee_K\cap\Ee_{\rm int}$ and $K\sig=\sig$ if $\sig\in\Ee_K\cap\Ee^D$.

The goal of this section is to derive an $\ell^\infty$ bound on the solution $\bu$ to the linear system~\eqref{eq:Lu=b}
which is uniform w.r.t. the mesh. This is the purpose of the following proposition.

\begin{prop}\label{prop:Linf_Poisson}
There exists $C$ depending only on $\O$ such that 
\[
|u_K| \leq C\max\left\{\|u^D\|_{L^\infty(\p\O)}, \|f\|_{L^\infty(\O)}\right\}, \qquad \forall K\in\Tt.
\]
\end{prop}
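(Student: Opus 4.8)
The cornerstone of the argument is that $\bbL$ is a symmetric M-matrix. Its off-diagonal entries are nonpositive; the rows attached to Dirichlet faces $\sig\in\Ee^D$ have row sum $1$, while — because the mirror convention \eqref{eq:mirror} forces $D_{K\sig}u=0$ across any Neumann face, so that such faces never enter the cell balances — the rows attached to control volumes have \emph{zero} row sum. Since $\O$ is connected and $\Gamma^D$ has positive $(d-1)$-measure, every control volume is joined through the mesh to at least one Dirichlet face, so $\bbL$ is irreducibly diagonally dominant; hence it is invertible with $\bbL^{-1}\ge 0$ entrywise. Moreover, applying $\bbL^{-1}$ to $\bbL\,\mathbf 1=(\mathbf 0_\Tt,\mathbf 1_{\Ee^D})$ gives $\sum_{\sig\in\Ee^D}(\bbL^{-1})_{K,\sig}=1$ for every $K\in\Tt$.

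From $\bbL\bu=\boldsymbol b$ one gets the representation
\[
u_K=\sum_{L\in\Tt}(\bbL^{-1})_{K,L}\,m_L f_L+\sum_{\sig\in\Ee^D}(\bbL^{-1})_{K,\sig}\,u^D_\sig ,\qquad K\in\Tt .
\]
The second sum is a convex combination of the $u^D_\sig$'s, hence bounded by $\|u^D\|_{L^\infty(\p\O)}$; and $\bigl|\sum_{L}(\bbL^{-1})_{K,L}m_L f_L\bigr|\le \|f\|_{L^\infty(\O)}\,w_K$, where $\bw=(w_K)_{K\in\Tt}=\bbL^{-1}(\mathbf m,\mathbf 0)\ge 0$ is nothing but the TPFA solution of $-\Delta w=1$ in $\O$, $w=0$ on $\Gamma^D$, $\grad w\cdot\n=0$ on $\Gamma^N$. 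Consequently the whole statement reduces to the mesh-uniform bound $\|\pi_\Tt\bw\|_{L^\infty(\O)}\le C(\O)$.

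To establish it I would run the Stampacchia (De Giorgi) truncation method on $\bw$. Testing the discrete equation for $\bw$ first against $\bw$ itself, a discrete integration by parts, the identity \eqref{eq:norm_L2H1} and the discrete Poincar\'e inequality for functions vanishing on $\Gamma^D$ give $\|\grad_\Tt\bw\|_{L^2(\O)}\le C(\O)$. Next, for $\kappa\ge 0$, testing against $\bigl((w_K-\kappa)^+\bigr)_{K\in\Tt}$ and using the elementary inequality $(a-b)\bigl((a-\kappa)^+-(b-\kappa)^+\bigr)\ge\bigl|(a-\kappa)^+-(b-\kappa)^+\bigr|^2$ together with $(w_\sig-\kappa)^+=0$ on $\Gamma^D$ yields
\[
\tfrac1d\bigl\|\grad_\Tt(\pi_\Tt\bw-\kappa)^+\bigr\|_{L^2(\O)}^2\le\int_{\{\pi_\Tt\bw>\kappa\}}(\pi_\Tt\bw-\kappa)\,\d\x .
\]
Combining this with a discrete Sobolev inequality $\|\pi_\Tt v\|_{L^{2^\star}(\O)}\le C(\O)\|\grad_\Tt v\|_{L^2(\O)}$ (with $2^\star>2$; $2^\star=6$ if $d=3$, any finite exponent if $d=2$, the case $d=1$ being immediate from the discrete $H^1\hookrightarrow L^\infty$ embedding), H\"older's inequality and the classical Stampacchia lemma gives $\pi_\Tt\bw\le C(\O)$ a.e. in $\O$; since $\bw\ge 0$ this is the sought bound, and the proposition follows.

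The main obstacle is this last step, and within it the requirement that the discrete Poincar\'e and Sobolev inequalities be used with constants depending only on $\O$ (and $d$), not on the mesh geometry; this mesh-robustness is exactly what is provided by the discrete functional analysis of \cite{EGH00,BCCHF15,GG10}. The remaining book-keeping — signs and boundary terms in the discrete integration by parts, and the fact that Neumann faces are simply absent from all the sums by the mirror convention — is routine.
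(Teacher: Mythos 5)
Your reduction is sound and genuinely different from the paper's argument. Both proofs start from the same observation that $\bbL$ is an M-matrix with $\bbL^{-1}\ge 0$, but from there the paper does something much more elementary: it exhibits an explicit discrete supersolution, namely the quadratic barrier $w(\x)=1+\frac1d\bigl(\sup_{\boldsymbol y\in\O}|\boldsymbol y|^2-|\x|^2\bigr)$, verifies $\bbL\bw\ge\mathbf 1$ using only the orthogonality condition and the identity $\sum_{\sig\in\Ee_K}m_\sig\n_{K\sig}=\mathbf 0$, and invokes the comparison principle $\|\bbL^{-1}\|_\infty\le\|\bw\|_\infty$ (Hackbusch), yielding the explicit constant $1+\operatorname{diam}(\O)^2/(4d)$ with no discrete functional analysis whatsoever. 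Your route --- splitting off the Dirichlet data as a convex combination and reducing to an $L^\infty$ bound on the discrete torsion function via Stampacchia truncation --- is precisely the alternative the paper alludes to when citing Gallou\"et's Tamtam note, and the truncation step itself (the test function $((w_K-\kappa)^+)_K$, the Lipschitz inequality, the vanishing of the truncation on $\Gamma^D$) is correctly set up.

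The one point you overstate is the mesh-robustness of the discrete Sobolev inequality. The Stampacchia iteration genuinely needs an exponent gain $2^\star>2$ (with Poincar\'e alone the level-set recursion has exponent $1$ on $|\{u>\kappa\}|$ and does not terminate), and the discrete embedding $\|\pi_\Tt v\|_{L^{2^\star}}\le C\|\grad_\Tt v\|_{L^2}$ in \cite{EGH00,BCCHF15,GG10} carries a constant depending on the regularity parameter $\zeta_\Tt$, not only on $\O$ (only the $BV\hookrightarrow L^{d/(d-1)}$ and the zero-boundary $L^2$ Poincar\'e estimates are regularity-free; passing to $p=2$ requires comparing half-diamond and cell measures). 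So your argument proves the estimate with $C=C(\O,\zeta_\Tt)$, which suffices for every use made of the proposition in this paper (the convergence analysis assumes $\zeta_{\Tt_m}\ge\zeta^\star$), but is strictly weaker than the stated dependence on $\O$ alone; the barrier-function proof is what delivers the sharper, regularity-free constant. A second, cosmetic, point: the assembled matrix is reducible (the Dirichlet rows are decoupled), so ``irreducibly diagonally dominant'' is not literally the right hypothesis --- one should invoke weakly chained diagonal dominance or directly the M-matrix criterion of \cite[Definition 4.8]{Hackbusch}, as the paper does.
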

\begin{proof}
The proof we propose here is an extension to the context of TPFA Finite Volumes of the proof of Hackbusch~\cite{Hackbusch} 
for Finite Differences. 
An alternative proof of Proposition~\ref{prop:Linf_Poisson} based on Stampacchia's truncation estimates is sketched in~\cite{Gallouet_Tamtam}.

The definitions of $u^D_\sig$ and $f_K$ ensure that 
\[\|\boldsymbol{b}\|_\infty \leq \max\left\{\|u^D\|_{L^\infty(\p\O)}, \|f\|_{L^\infty(\O)}\right\},\]
so that 
\[
\|\bu\|_\infty \leq \|\bbL^{-1}\|_\infty \|\boldsymbol{b}\|_\infty\leq  \|\bbL^{-1}\|_\infty\max\left\{\|u^D\|_{L^\infty(\p\O)}, \|f\|_{L^\infty(\O)}\right\}.
\]
Therefore, it only remains to check that $\|\bbL^{-1}\|_\infty\leq C$ for some $C$ not depending on $\Tt$.

The matrix $\bbL$ is a $M$-matrix (see \cite[Definition 4.8]{Hackbusch}). Therefore, owing to \cite[Theorem 4.24]{Hackbusch}, 
if we can exhibit some vector $\bw \in \R^{\Tt+\Ee^D}$ such that $\bbL \bw \geq {\bf 1}$, then $\|\bbL^{-1}\|_\infty \leq \|\bw\|_\infty$.
Define the function $w:\ov \O \to \R$ by 
\[
w(\x) = 1 + \frac1d\left(\sup_{\boldsymbol y\in\O} |\boldsymbol y|^2 - |\x|^2\right) \geq 1, \qquad \x\in\ov\O, 
\]
and the vector $\bw=\left(w_K,w_\sig\right)$ by $w_K = w(\x_K)$, $K\in\Tt$, and $w_\sig = w(\x_\sig)$, $\sig\in\Ee^D$.

The estimate on the Dirichlet nodes is straightforward:
\[
\left(\bbL \bw\right)_\sig = w_\sig \geq 1, \qquad \forall \sig \in \Ee^D.
\]
Now, let us focus on the inner nodes $K\in\Tt$. Since $\sum_{\ell \in\Tt\cup\Ee^D} \bbL_{K,\ell} = \sum_{\sig\in\Ee_K} \bbL_{K,K\sig}= 0$, one has 
\begin{align*}
\left(\bbL \bw\right)_K =& \frac1d\sum_{\sig \in \Ee_K} \bbL_{K,K\sig} |\x_{K\sig}|^2
 =  \frac1{dm_K} \sum_{\sig \in \Ee_K}\tau_\sig\left(|\x_K|^2 - |\x_{K\sig}|^2\right) \\
  = & \frac1{dm_K} \sum_{\sig \in \Ee_K}\tau_\sig \left(|\x_{K} - \x_{K\sig}|^2 + 2 \x_K \cdot (\x_K - \x_{K\sig}) \right) \\
  = & \frac1{dm_K} \sum_{\sig \in \Ee_K} m_\sig d_\sig + \frac2{dm_K} \x_K \cdot \sum_{\sig\in\Ee_K} \tau_\sig (\x_K - \x_{K\sig}).
\end{align*}
Because of the geometric relation $m_\sig d_\sig = d m_{\Delta_\sig}$, and since $K \subset \bigcup_{\sig\in\Ee_K} \Delta_\sig$, 
there holds
\[
 \frac1{dm_K} \sum_{\sig \in \Ee_K} m_\sig d_\sig = \frac1{m_K}\sum_{\sig\in\Ee_K} m_{\Delta_\sig} \geq 1. 
\]
On the other hand, the second term vanishes since 
\[
\sum_{\sig\in\Ee_K} \tau_\sig (\x_K - \x_{K\sig}) = -\sum_{\sig\in\Ee_K} m_\sig \n_{K\sig} = {\bf 0}.
\]
Therefore, $\left(\bbL \bw\right)_K\geq 1$ for all $K\in\Tt$. As a consequence, 
\[
\|\bbL^{-1}\|_\infty \leq \|\bw\|_\infty = 1 + \frac1d\sup_{\boldsymbol y\in\O} |\boldsymbol y|^2 \leq 1 + \frac{{\rm diam}(\O)^2}{4d}.
\]
The last estimate comes from the fact that one can choose the origin for $\boldsymbol y$ arbitrarily.
\end{proof}
% ===============================================================
\section{Proof of Lemma~\ref{lem:dissip}}\label{app:dissip}

{\bf Step 1.} Let $\delta\in (0,1)$ and $M\in\R$.
We start with the proof of 
\be\label{lim_Upsilon}
\ds\lim_{c_L\to 1}\Upsilon_{\delta,M}(c_L)=+\infty
\ee
where 
$$
{\Upsilon}_{\delta,M}(c_L)=\inf \left\{\Dd(c_K,c_L, \Phi_K, \Phi_L);  c_K\in (0, 1-\delta], (\Phi_K,\Phi_L) \in [-M,M]^2\right\}.
$$
We recall that 
$$
\begin{aligned} 
{\mathcal D} (c_K,c_L,\Phi_K,\Phi_L)&={\mathcal C} (c_K,c_L,\Phi_K,\Phi_L)\vert h(c_K)+\Phi_K-h(c_L)-\Phi_L\vert^2,\\
&={\mathcal F} (c_K,c_L,\Phi_K,\Phi_L)(h(c_K)+\Phi_K-h(c_L)-\Phi_L)
\end{aligned}
$$
and we notice that the diffusion force blows up:
\be\label{eq:diff-force-blowup}
\ds\lim_{c_L\to 1} \inf\left\{\Bigl|h(c_K)-h(c_{L})+ \Phi_K- \Phi_{L}\Bigl| ; c_K\in (0,1-\delta], (\Phi_K,\Phi_L)\in [-M,M]^2\right\}=+\infty.
\ee
Therefore, we can get \eqref{lim_Upsilon} by proving that  either ${\mathcal C} (c_K,c_L,\Phi_K,\Phi_L)$ or ${\mathcal F} (c_K,c_L,\Phi_K,\Phi_L)$ stays  bounded away from 0, uniformly in $c_K\in (0,1-\delta]$, $(\Phi_K,\Phi_L)\in [-M,M]^2$, for $c_L\geq 1/2$. 

For the \underline{centered flux}, we have that,  for all $(c_K,\Phi_K,\Phi_L)\in (0,1-\delta]\times [-M,M]^2$, $\Cc (c_K,c_L,\Phi_K,\Phi_L)=(c_K+c_L)/2\geq c_L/2$. This yields \eqref{lim_Upsilon}. 
For the three other schemes, we remark that, for any $\alpha\in (0,1-\delta)$, we can rewrite
$$
{\Upsilon}_{\delta,M}(c_L)=\min ({\Upsilon}_{\delta,M}^{\alpha,1}(c_L), {\Upsilon}_{\delta,M}^{\alpha,2}(c_L)),
$$
where 
$$
\begin{aligned}
{\Upsilon}_{\delta,M}^{\alpha,1}(c_L)&=\inf \left\{\Dd(c_K,c_L, \Phi_K, \Phi_L);  c_K\in (0, \alpha), (\Phi_K,\Phi_L) \in [-M,M]^2\right\}\\
{\Upsilon}_{\delta,M}^{\alpha,2}(c_L)&=\inf \left\{\Dd(c_K,c_L, \Phi_K, \Phi_L);  c_K\in [\alpha, 1-\delta], (\Phi_K,\Phi_L) \in [-M,M]^2\right\}
\end{aligned}
$$
But Lemma \ref{lem:avg} ensures that, independently of the choice of the numerical flux,  we have at least $\Cc (c_K,c_L,\Phi_K,\Phi_L)\geq \min(c_K,c_L)/2$, so that 
$\Cc (c_K,c_L,\Phi_K,\Phi_L)\geq \alpha/2$ for all $(c_K,c_L,\Phi_K,\Phi_L)\in [\alpha, 1-\delta]\times [1/2,1)\times [-M,M]^2$ if 
$\alpha {\in (0,1-\delta).}$
Therefore, for all $\alpha {\in (0,1-\delta)}$, 
we have 
$$
\ds\lim_{c_L\to 1} {\Upsilon}_{\delta,M}^{\alpha,2}(c_L)=+\infty.
$$
It remains to prove that for a given $\alpha {\in (0,1-\delta)}$
we also have 
\begin{equation}\label{lim_Upsilon_alpha}
\ds\lim_{c_L\to 1} {\Upsilon}_{\delta,M}^{\alpha,1}(c_L)=+\infty.
\end{equation}
{Because of the monotonicity of $\delta \mapsto \Upsilon_{\delta, M}(c_L)$, we can restrict our attention to the 
case $\delta \leq 1/2$, so that we can seek for $\alpha \in (0,1/2]$.}
\smallskip

For the \underline{Bessemoulin-Chatard flux}, we have 
$$
{\mathcal F}(c_K,c_L,\Phi_K,\Phi_L)=dr(c_K,c_L)
\left\{B\left(\frac{\Phi_L-\Phi_K}{dr(c_K,c_L)}\right)c_K-B\left(\frac{\Phi_K-\Phi_L}{dr(c_K,c_L)}\right)c_{L}\right\},
$$
with $dr(c_K,c_L)\geq 1$. Using the monotonicity of the Bernoulli function and the bounds on $\Phi_K$ and $\Phi_L$, we get:
\[B(2M)\leqslant B\left(\pm \frac{\Phi_L-\Phi_K}{dr(c_K,c_L)}\right)\leqslant B(-2M)\]
Hence, for $\alpha=\frac{B(2M)}{4B(-2M)}$:
\[{\mathcal F}(c_K,c_L,\Phi_K,\Phi_L)\leqslant dr(c_K,c_L)
\left\{B(-2M)\alpha-B(2M)\frac12\right\}\leqslant -\frac{B(2M)}{4}
\]
Then, thanks to \eqref{eq:diff-force-blowup}, we deduce \eqref{lim_Upsilon_alpha} for $\alpha=\frac{B(2M)}{4B(-2M)}$ and therefore \eqref{lim_Upsilon}.
\smallskip

For the \underline{Sedan flux}, we use similarly the monotonicity of the function $B$ and $\nu$, so that 
\begin{multline*}
{\mathcal F}(c_K,c_L,\Phi_K,\Phi_L)\leq B\left(-2M+\nu(\frac{1}{2})-\nu(\alpha)\right)\alpha -B\left(2M-\nu(\frac{1}{2})+\nu(\alpha)\right)\frac12\\
\forall c_K\in (0,\alpha), c_L\in (\frac12, 1), (\Phi_K,\Phi_L)\in [-M,M]^2.
\end{multline*}
But the right-hand-side of the last inequality has a negative limit when $\alpha$ tends to $0$, which means that for a given $\alpha$ 
{small enough} it remains bounded away from 0, so that we deduce \eqref{lim_Upsilon_alpha} and therefore \eqref{lim_Upsilon}.

\smallskip

For the \underline{activity based flux}, we also use the monotonicity of $a$ and $\beta$, which yields
\begin{multline*}
{\mathcal F}(c_K,c_L,\Phi_K,\Phi_L)\leq \frac14\left( B(-2M) a(\alpha)-B(2M) a(\frac12)\right)\\
\forall c_K\in (0,\alpha), c_L\in (\frac12, 1), (\Phi_K,\Phi_L)\in [-M,M]^2.
\end{multline*}
The right-hand-side has a negative limit when $\alpha$ tends to 0. Thus it remains bounded away from 0 for a given $\alpha<1/2$ and we deduce \eqref{lim_Upsilon_alpha} and therefore \eqref{lim_Upsilon}.

{\bf Step 2.} We now focus on the proof of
\be\label{lim_Psi}
\ds\lim_{c_L\to 0}\Psi_{\delta,M}(c_L)=+\infty
\ee
where 
$$
{\Psi}_{\delta,M}(c_L)=\inf \left\{\Dd(c_K,c_L, \Phi_K, \Phi_L);  c_K\in [\delta, 1), (\Phi_K,\Phi_L) \in [-M,M]^2\right\}.
$$

We use similar arguments than in Step 1.  First, the diffusion force still blows up :
\be\label{eq:diff-force-blowup2}
\ds\lim_{c_L\to 0} \inf\left\{\Bigl|h(c_K)-h(c_{L})+ \Phi_K- \Phi_{L}\Bigl| ; c_K\in [\delta, 1), (\Phi_K,\Phi_L)\in [-M,M]^2\right\}=+\infty.
\ee
For the \underline{centered flux}, we have : $\Cc (c_K,c_L,\Phi_K,\Phi_L)=(c_K+c_L)/2\geq \delta/2$ hence \eqref{lim_Psi}.
For the other fluxes, we rewrite again
$$
{\Psi}_{\delta,M}(c_L)=\min ({\Psi}_{\delta,M}^{\alpha,1}(c_L), {\Psi}_{\delta,M}^{\alpha,2}(c_L)),
$$
where 
$$
\begin{aligned}
{\Psi}_{\delta,M}^{\alpha,1}(c_L)&=\inf \left\{\Dd(c_K,c_L, \Phi_K, \Phi_L);  c_K\in [\delta, \alpha], (\Phi_K,\Phi_L) \in [-M,M]^2\right\};\\
{\Psi}_{\delta,M}^{\alpha,2}(c_L)&=\inf \left\{\Dd(c_K,c_L, \Phi_K, \Phi_L);  c_K\in (\alpha, 1), (\Phi_K,\Phi_L) \in [-M,M]^2\right\}.
\end{aligned}
$$

Using the symmetry of the flux ${\mathcal F}(c_K,c_L,\Phi_K,\Phi_L)=-{\mathcal F}(c_L,c_K,\Phi_L,\Phi_K)$  and following the proof of \eqref{lim_Upsilon_alpha}, we get that for $\alpha=1/2$,
\[
\lim_{c_L\to 0} {\Psi}_{\delta,M}^{\alpha,2}(c_L)=+\infty
\]
We now have to prove that, for $\alpha=1/2$, 
\be\label{lim_Psi_alpha}
\lim_{c_L\to 0} {\Psi}_{\delta,M}^{\alpha,1}(c_L)=+\infty
\ee
To this end, we will show bounds on the flux. The set $[\delta, \alpha]\times [-M,M]^2$ is compact, and the flux functions are continuous. It is sufficient to show a positive lower bound {for} the limit at any $(c^*,\Phi^*,\Phi_*)\in [\delta, \alpha]\times [-M,M]^2$:
\[l^*=\quad\lim_{\mathclap{(c_K,c_L,\Phi_K,\Phi_L)\to (c^*,0,\Phi^*,\Phi_*)}}\quad{\mathcal F}(c_K,c_L,\Phi_K,\Phi_L) {>0}.\]

For the Sedan scheme, we have:
\[
l^*= B\left(\Phi_*-\Phi^* - \nu(c^*)) \right) c^*\geq \delta B(2M).
\]
For the Bessemoulin-Chatard scheme, we have:
$
\ds\lim_{(c_K,c_L)\to (c^*,0)} dr(c_K,c_L)={1},
$
hence:
\[
l^*=B(\Phi_*-\Phi^*)c^*\geq \delta B(2M).
\]
For the activity based scheme we have:
\[
l^*= \frac{\beta(c^*)+1}{2}
B(\Phi_*-\Phi^*)a(c^*)\geq \frac{a(\delta)}{2}B(2M).
\]
As these limits are bounded away from zero we have \eqref{lim_Psi_alpha} hence \eqref{lim_Psi}. It concludes the proof of Lemma~\ref{lem:dissip}.

\section{Comparison of face concentration functionals}\label{app:alacon}

For each scheme, we have defined a face concentration functional $\Cc : (0,1)\times(0,1)\times\R\times\R\to \R$. 
We introduce a second face concentration functional $\wt\Cc: (0,1)\times (0,1)\to\R$ , defined by 
$$
\wt\Cc (c_K,c_L)=\ds\frac{r(c_K)-r(c_L)}{h(c_K)-h(c_L)} \mbox{ if } c_K\neq c_L\mbox{ and } c_K \mbox{ otherwise}.
$$
As $r'(c)=ch'(c)$, it is clear that 
\be\label{eq:wt_avg}
\min(c_K,c_L)\leq \wt\Cc (c_K,c_L)\leq \max(c_K,c_L)\mbox{  for all }(c_K,c_L)\in (0,1)\times (0,1). 
\ee
Lemma \ref{lem:cctilde} states a comparison between $\Cc$ and $\wt\Cc$ for the centered and the Sedan schemes.
\begin{lemma}\label{lem:cctilde}
For the centered scheme and the Sedan scheme, there exists $G>0$, depending only on $M$, such that for all $(c_K,c_L,\Phi_K,\Phi_L)\in (0,1)\times(0,1)\times[-M,M]\times[-M,M]$,
\be\label{est:cctilde}
\ds\frac{\wt\Cc(c_K,c_L)}{\Cc(c_K,c_L,\Phi_K,\Phi_L)}\leq G.
\ee  
\end{lemma}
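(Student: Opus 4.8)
The plan is to treat the two schemes separately, the centered one being immediate and the Sedan one requiring a short one–dimensional calculus argument.

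For the centered scheme the bound is trivial: the proof of Lemma~\ref{lem:avg} gives $\Cc(c_K,c_L,\Phi_K,\Phi_L)=\tfrac12(c_K+c_L)$, while \eqref{eq:wt_avg} gives $\wt\Cc(c_K,c_L)\le\max(c_K,c_L)\le c_K+c_L$, whence $\wt\Cc/\Cc\le 2$, and one may take $G=2$ regardless of $M$.

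For the Sedan scheme I would first put $\Cc$ in a closed form amenable to analysis. Starting from \eqref{eq:SEDAN_flux}, writing $\ell=\log(c_K/c_L)$ and $w=(\Phi_L-\Phi_K)+(\nu(c_L)-\nu(c_K))$, and using $B(-t)=B(t)+t$, $B(t)=t/(e^{t}-1)$, $h=\log+\nu$ and $c_K-c_L=c_L(e^{\ell}-1)$, elementary algebra yields
\[
\Cc(c_K,c_L,\Phi_K,\Phi_L)=\frac{c_L\,w\,(e^{\ell}-e^{w})}{(e^{w}-1)(\ell-w)},
\]
the apparent singularities at $w=0$ and $w=\ell$ being removable; since this quantity is positive by Lemma~\ref{lem:avg}, the right-hand side extends to a positive real-analytic function of $w$. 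By Remark~\ref{rem:Sedan} and the definition of $\wt\Cc$, $\wt\Cc(c_K,c_L)=\Cc(c_K,c_L,\Phi,\Phi)$, which is exactly the value of the displayed expression at $w=w_0:=\nu(c_L)-\nu(c_K)$. Freezing $c_K,c_L$ and letting $s=\Phi_L-\Phi_K$ vary in $[-2M,2M]$, so that $w=w_0+s$, the ratio $\wt\Cc/\Cc$ equals $\phi(0)/\phi(s)$, where $\phi(s)$ denotes the displayed expression.

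The key step is that the logarithmic derivative of $\phi$ decouples into two uniformly bounded pieces. Differentiating $\log\phi$ in $s$ (equivalently in $w$, as $w=w_0+s$ and $\phi>0$) gives
\[
\frac{\phi'(s)}{\phi(s)}=\Bigl(\frac1w-\frac{e^{w}}{e^{w}-1}\Bigr)+\Bigl(\frac{1}{\ell-w}-\frac{1}{e^{\ell-w}-1}\Bigr)=(\log B)'(w)+\beta(\ell-w),
\]
with $\beta(u)=\tfrac1u-\tfrac1{e^{u}-1}$, both terms continuous once the removable singularities are filled in ($(\log B)'(0)=-\tfrac12$, $\beta(0)=\tfrac12$). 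Using only the inequality $e^{t}>1+t$ for $t\neq 0$ one checks that $(\log B)'(w)\in(-1,0)$ and $\beta(u)\in(0,1)$ for all real $w,u$: $(\log B)'<0$ because $B$ is positive and decreasing; $(\log B)'(w)>-1$ reduces to $e^{w}-1>w$; $\beta(u)>0$ is again $e^{u}-1>u$, and $\beta(u)<1$ reduces to $e^{-u}>1-u$. Hence $|(\log\phi)'|<1$ everywhere, so $|\log\phi(0)-\log\phi(s)|\le|s|\le 2M$, i.e. $\wt\Cc/\Cc=\phi(0)/\phi(s)\le e^{2M}$. Taking $G=\max(2,e^{2M})$, which depends only on $M$, then proves the lemma.

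The genuine obstacle is the algebraic reduction to the closed form for $\Cc$ and the observation that its logarithmic derivative splits cleanly into $(\log B)'$ and $\beta$; after that, the required bounds are immediate consequences of $e^{t}>1+t$. Minor points to handle with care: $w$ and $e^{w}-1$ may be negative, so one should differentiate $\log\phi=\log|\phi|$ rather than split the logarithm into individual factors, and one should note that $|(\log\phi)'|<1$ persists at the two removable singularities, which follows by continuity.
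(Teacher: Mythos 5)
Your proof is correct, and it takes a genuinely different route from the paper's. The paper argues by continuity and compactness: after reducing via $\Gg(c_K,c_L,\Phi_K,\Phi_L)\le c_K/c_L$ (for $c_K>c_L$) to the regime $c_L\to 0$, it computes the limits of $\Gg$ at $(1,0)$, at $(c^\ast,0)$, and at $(0,0)$ (splitting the last case according to whether $c_K/c_L$ stays bounded), and concludes boundedness without producing an explicit constant. You instead derive the closed form $\Cc=c_L\,w\,(e^{\ell}-e^{w})/\bigl((e^{w}-1)(\ell-w)\bigr)$ with $w=D_{K\sig}(\bPhi+\nu(\bc))$, observe via Remark~\ref{rem:Sedan} that $\wt\Cc$ is the value of this same expression at $\Phi_K=\Phi_L$, and control the logarithmic derivative in the potential jump by $1$, using only $e^{t}>1+t$; indeed $(\log B)'(w)=\beta(w)-1$, so both of your bounds reduce to $\beta\in(0,1)$. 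This yields the two-sided Lipschitz estimate $e^{-2M}\le \wt\Cc/\Cc\le e^{2M}$ and hence the explicit constant $G=\max(2,e^{2M})$. Your approach buys an explicit, quantitative constant and sidesteps the delicate analysis near $(0,0)$, where $\Gg$ has no limit and the paper must argue local boundedness rather than continuous extension; the paper's approach is less computational and reuses the averaging machinery of Lemma~\ref{lem:avg}, but is also less sharp. Your closing caveats (differentiating $\log|\phi|$ since $w$ and $e^{w}-1$ may be negative, and checking the bound persists at the removable singularities $w=0$ and $w=\ell$, where $(\log B)'(0)=-\tfrac12$ and $\beta(0)=\tfrac12$) are exactly the right points to flag, and they hold.
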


\begin{proof}
The case of the centered scheme defined by \eqref{eq:centered_flux} is the easiest one, since 
$$
\Cc(c_K,c_L,\Phi_K,\Phi_L)=\frac{c_K + c_{L}}2\geq \frac12 \max(c_K,c_{L}),
$$
so that \eqref{est:cctilde} holds with $G=2$ thanks to~\eqref{eq:wt_avg}.

Let us now focus on the Sedan scheme defined by \eqref{eq:SEDAN_flux}. We can introduce the function ${\mathcal G} : (0,1)\times(0,1)\times\R\times\R\to \R$ defined by 
$$
{\mathcal G}(c_K,c_L,\Phi_K,\Phi_L)=\ds\frac{\wt\Cc(c_K,c_L)}{\Cc(c_K,c_L,\Phi_K,\Phi_L)}.
$$
It is a continuous function which satisfies the symmetry property $ \Gg(c_K,c_{L},\Phi_K,\Phi_{L}) = \Gg(c_{L},c_{K},\Phi_{L},\Phi_{K})$ and the consistency $ \Gg(c_K,c_{K},\Phi_K,\Phi_{L})=1$.
Bearing in mind 
the expression~\eqref{eq:B_avg} of $\Cc$, with $x=\log(c_K)-\log(c_L)$, $y= \Phi_L+\nu(c_L)-\Phi_K-\nu(c_K)$ and $x-y=h(c_K)+\Phi_K-h(c_L)-\Phi_L$, one can rewrite 
\[
\Gg(c_K,c_{L},\Phi_K,\Phi_L) =\frac{(h(c_K)+\Phi_K-h(c_L)-\Phi_L)(r(c_K)-r(c_L))}{\big(B(y)c_K - B(-y) c_L\big)(h(c_K) - h(c_L))}
\]
Because of the symmetry and the consistency properties,
we can assume without loss of generality that $c_K> c_L$ .
Using the average properties~\eqref{eq:avg} and~\eqref{eq:wt_avg}, one obtains that 
\be\label{eq:angle}
\Gg(c_K,c_{L},\Phi_K,\Phi_{L}) \leq \frac{c_K}{c_L} \leq \frac1{c_L}, 
\ee
so that we only have to check that $\Gg(c_K,c_{L},\Phi_K,\Phi_{L})$ remains uniformly bounded as $c_L$ tends to $0$ in order to prove \eqref{est:cctilde}. Therefore, considering that $(\Phi_K,\Phi_L)$ is given, we study the limit of $\Gg$ when $(c_K,c_L)$ tends to (1,0), (0,0) and $(c^\ast, 0)$ with $c^\ast\in (0,1)$. 

We first consider the limit $(c_K,c_L)\to (1,0)$. We have the following equivalences when $(c_K,c_L)\to (1,0)$:
$$
\begin{gathered}
h(c_K)-h(c_L)\sim -\log(1-c_K)-\log(c_L)\\
h(c_K)+\Phi_K-h(c_L)-\Phi_L\sim -\log(1-c_K)-\log(c_L)\\
r(c_K)-r(c_L)\sim -\log(1-c_K)\\
B(y)c_K - B(-y) c_L\sim -c_K\log(1-c_K)
\end{gathered}
$$
This yields:
\be\label{lim1}
\ds\lim_{(c_K,c_L)\to (1,0)}\Gg(c_K,c_{L},\Phi_K,\Phi_{L})=1 \quad \forall (\Phi_K,\Phi_L)\in\R\times\R.
\ee
With similar arguments, we compute the limit $(c_K,c_L)\to (c^\ast,0)$ with $c^\ast\in(0,1)$. We get:
\be\label{lim2}
\ds\lim_{(c_K,c_L)\to (c^\ast,0)}\Gg(c_K,c_{L},\Phi_K,\Phi_{L})=\frac{r(c^\ast)}{B(y^\ast)c^\ast} \quad \forall (\Phi_K,\Phi_L)\in\R\times\R,
\ee
with $y^\ast=\Phi_L-\Phi_K+\log(1-c^{\ast})$.

In the neighborhood of $(0,0)$, the behaviour is more complex, as the limit of $\log(c_K/c_L)$ is not defined and $\Gg$ does not have a limit. However, thanks to \ref{eq:angle},  $\Gg(c_K,c_{L},\Phi_K,\Phi_{L})$ stays bounded if $c_K/c_L$ stays bounded while $(c_K,c_L)\to (0,0)$.
It remains to consider the case where $(c_K,c_L)\to (0,0)$ while $c_K/c_L\to \infty$. In this case, 
$$
\begin{gathered}
\frac{(h(c_K)+\Phi_K-h(c_L)-\Phi_L)}{h(c_K) - h(c_L)}\to 1,\\
r(c_K)-r(c_L)\sim -c_L+c_K\\
B(y)c_K - B(-y) c_L\sim B(\Phi_L-\Phi_K)c_K-B(\Phi_K-\Phi_L)c_L,
\end{gathered}
$$
and 
$$
\ds\lim_{\stackrel{(c_K,c_L)\to (0,0)}{c_K/c_L\to \infty}}\Gg(c_K,c_{L},\Phi_K,\Phi_{L})=\frac{1}{B(\Phi_L-\Phi_K)}.
$$
We conclude that $\Gg(c_K,c_{L},\Phi_K,\Phi_{L})$ stays bounded  when $(c_K,c_L)$ is in the neighborhood of $(0,0)$ for all $(\Phi_K,\Phi_L)\in \R^2$.
Combined with \eqref{eq:angle}, \eqref{lim1} and \eqref{lim2}, it concludes the proof of Lemma \ref{lem:cctilde}.
\end{proof}

\begin{remark}
For the Bessemoulin-Chatard scheme, the bound \eqref{est:cctilde} does not hold. Let us consider that $\Phi_K=\Phi_L=\Phi$, then with the notations $x=\log(c_K/c_L)$, and $y=\log(\frac{1-c_L}{1-c_K})$, we have: 
\[
\frac{\tilde{\Cc}(c_K,c_L)}{\Cc(c_K,c_L,\Phi,\Phi)}=\frac{xy}{(c_K-c_L)(x+y)}.
\]
For $(c_K,c_L)\to(1,0)$, $x$ and $y$ tends to $+\infty$, hence the blow up of the ratio. 
\end{remark}

\end{document}